\newcommand{\aTop}[2]{\begin{array}{c}{#1}\\{#2}\end{array}}
\newcommand{\Hy}{\mathcal{H}}
\newcommand{\ringO}{\mathcal{O}}
\newcommand{\C}{{\mathbb{C}}}
\newcommand{\R}{{\mathbb{R}}}
\newcommand{\Z}{{\mathbb{Z}}}
\newcommand{\F}{{\mathbb{F}}}
\newcommand{\N}{{\mathbb{N}}}
\newcommand{\rationals}{{\mathbb{Q}}}
\newcommand*{\Homol}{\operatorname{H}}
\newcommand{\Farrell}{\widehat{\Homol}}
\newcommand{\PSL}{\mathrm{PSL}}
\newcommand{\PSLO}{\mathrm{PSL}_2\left(\mathcal{O}_{-m}\right)}
\newcommand{\SLO}{\mathrm{SL}_2\left(\mathcal{O}_{-m}\right)}
\renewcommand{\leq}{\leqslant}
\renewcommand{\geq}{\geqslant}
\newcommand{\Afour}{\mathcal{A}_4}
\newcommand{\Afive}{\mathcal{A}_5}
\newcommand{\Sthree}{\mathcal{D}_3}
\newcommand{\Kleinfourgroup}{\mathcal{D}_2}
\newcommand{\Dthree}{\mathcal{D}_3}
\newcommand{\Dfour}{\mathcal{D}_4}
\newcommand{\Dfive}{\mathcal{D}_5}
\newcommand{\Dsix}{\mathcal{D}_6}
\newcommand{\Dl}{\mathcal{D}_\ell}
\newcommand{\Dn}{\mathcal{D}_n}
\newcommand{\Sfour}{\mathcal{S}_4}
\newcommand{\Sfive}{\mathcal{S}_5}
\newcommand{\Stwo}{\mathcal{S}_2}
\newcommand{\Icos}{\mathrm{Icos}_{120}}
\newcommand{\cellCondition}{A}
\newcommand{\IsomorphismConditionSymbol}{B}
\newcommand{\technicalCondition}{B'}
\theoremstyle{plain}
\newtheorem{thm}{\bfseries Theorem}
\newtheorem{theorem}[thm]{\bfseries Theorem}
\newtheorem{Lem}[thm]{\bfseries Lemma}
\newtheorem{lemma}[thm]{\bfseries Lemma}
\newtheorem{proposition}[thm]{\bfseries Proposition}
\newtheorem{corollary}[thm]{\bfseries Corollary}
\newtheorem{definition}[thm]{\bfseries Definition}
\theoremstyle{remark}
\newtheorem{df}[thm]{\bfseries Definition}
\newtheorem{observation}[thm]{\bfseries Observation}
\newtheorem{remark}[thm]{\bfseries Remark}
\newtheorem*{ConditionA}{\bfseries Condition A}
\newtheorem*{isomorphismCondition}{\bfseries Condition B}
\newtheorem*{ConditionBprime}{\bfseries Condition B'}
\newcommand{\circlegraph}{ 
\begin{pspicture}     (-0.21,-0.155)(0.21,0.155) 
                      \pscircle(0,0.0){0.15}
                      \psdots(0.15,0.0)
\end{pspicture} }
\newcommand{\edgegraph}{ 
\begin{pspicture}(-0.3,-0.1)(0.3,0.3)
      \psdots(-0.2,0.0)
      \psdots(0.2,0.0)
      \psline(-0.2,0.0)(0.2,0.0)
\end{pspicture} }
\newcommand{\graphFive}{  
\begin{pspicture}(-0.21,-0.155)(0.21,0.155)
      \pscircle(0,0.0){0.15}
      \psdots(-0.15,0)
      \psdots(0.15,0)
      \psline(-0.15,0)(0.15,0)
\end{pspicture} }
\newcommand{\graphTwo}{  
\begin{pspicture}(0,0.05)(0.8,0.4)
      \pscircle(0.2,0.2){0.15}
      \psdots(0.35,0.2)
      \psline(0.35,0.2)(0.65,0.2)
      \psdots(0.65,0.2)
\end{pspicture} }
\newcommand{\ttsOne}{ 
\begin{pspicture}(-1.6,-0.1)(1.1,0.3)
	\uput{0}[180](0.0,0.2){\footnotesize $(\Z/2)^3 \rtimes \Sthree$}
        \psdots(-0.2,0.0)
        \psdots(0.4,0.0)
        \psline(-0.2,0.0)(0.2,0.0)
        \psline(0.2,0.0)(0.6,0.0)
	\uput{0}[0](0.3,0.2){\footnotesize $\Dthree$}
        \psdots(1.0,0.0)
        \psline(0.6,0.0)(1.0,0.0)
	\uput{0}[0](1.0,0.2){\footnotesize $\Sthree \times \Z/2$}
\end{pspicture} } 
\newcommand{\ttsTwo}{ 
\begin{pspicture}(-1.6,-0.1)(2.6,0.3)
	\uput{0}[180](0.0,0.2){\footnotesize $(\Z/2)^3 \rtimes \Sthree$}
        \psdots(-0.2,0.0)
        \psdots(0.4,0.0)
        \psline(-0.2,0.0)(0.2,0.0)
        \psline(0.2,0.0)(0.6,0.0)
	\uput{0}[0](0.3,0.2){\footnotesize $\Dthree$}
        \psdots(1.0,0.0)
        \psline(0.6,0.0)(1.0,0.0)
	\uput{0}[0](1.0,0.2){\footnotesize $(\Z/2)^3 \rtimes \Sthree$}
\end{pspicture} } 
\newcommand{\ttsThree}{ 
\begin{pspicture}(-0.6,-0.1)(3.2,0.99)
	\uput{0.1}[270](-0.0,0.0){\footnotesize $\Sfour$}
        \psdots(-0.0,0.0)
        \psdots(0.6,0.0)
        \psline(-0.0,0.0)(0.4,0.0)
        \psline(0.4,0.0)(0.8,0.0)
	\uput{0.1}[270](0.6,0.0){\footnotesize $\Dthree$}
        \psdots(1.2,0.0)
        \psline(0.8,0.0)(1.2,0.0)
	\uput{0.1}[0](1.2,0.0){\footnotesize $(\Z/2)^3 \rtimes \Sthree$}
        \psdots(0.25,0.433)
	\uput{0.15}[180](0.25,0.433){\footnotesize $\Dthree$}
        \psdots(0.5,0.866)
	\uput{0.1}[0](0.5,0.866){\footnotesize $(\Z/2)^3 \rtimes \Sthree$}
        \psline(-0.0,0.0)(0.5,0.866)
\end{pspicture} } 
\newcommand{\ttsSeven}{ 
\begin{pspicture}(-0.6,-0.1)(3.2,0.99)
        \psdots(0.6,0.0)
	\uput{0.1}[180](0.6,0.0){\footnotesize $\Dthree$}
        \psdots(1.2,0.0)
        \psline(0.6,0.0)(1.2,0.0)
	\uput{0.1}[0](1.2,0.0){\footnotesize $(\Z/2)^3 \rtimes \Sthree$}
        \psdots(0.6,0.5)
	\uput{0.1}[180](0.6,0.5){\footnotesize $\Dsix$}
        \psdots(1.2,0.5)
        \psline(0.6,0.5)(1.2,0.5)
	\uput{0.1}[0](1.2,0.5){\footnotesize $\Dsix \times \Z/2$}
\end{pspicture} } 
\newcommand{\ttsEight}{ 
\begin{pspicture}(-0.6,-0.3)(3.2,0.99)
	\uput{0.1}[180](0.0,0.0){\footnotesize $\Dthree$}
        \psdots(0.0,0.0)
        \psline(0.0,0.0)(0.6,0.0)
        \psdots(0.6,0.0)
	\uput{0.1}[270](0.6,0.0){\footnotesize $\Sfour$}
        \psline(0.6,0.0)(1.2,0.0)
        \psdots(1.2,0.0)
	\uput{0.1}[270](1.2,0.0){\footnotesize $\Dthree$}
        \psline(1.2,0.0)(2.4,0.0)
        \psdots(1.8,0.0)
	\uput{0.1}[270](1.8,0.0){\footnotesize $\Sfour$}
        \psdots(2.4,0.0)
	\uput{0.1}[0](2.4,0.0){\footnotesize $\Dthree$}
        \psdots(0.6,0.5)
	\uput{0.1}[180](0.6,0.5){\footnotesize $\Dthree$}
        \psdots(1.2,0.5)
        \psline(0.6,0.5)(1.2,0.5)
	\uput{0.1}[0](1.2,0.5){\footnotesize $\Dthree \times \Z/2$}
\end{pspicture} } 
\newcommand{\ttsNine}{ 
\begin{pspicture}(-0.6,-0.3)(3.2,0.6)
        \psdots(0.6,-0.5)
	\uput{0.1}[180](0.6,-0.5){\footnotesize $\Dthree$}
        \psdots(1.2,-0.5)
        \psline(0.6,-0.5)(1.2,-0.5)
	\uput{0.1}[0](1.2,-0.5){\footnotesize $(\Z/2)^3 \rtimes \Sthree$}
        \psdots(0.6,0.0)
	\uput{0.1}[180](0.6,0.0){\footnotesize $\Dthree$}
        \psdots(1.2,0.0)
        \psline(0.6,0.0)(1.2,0.0)
	\uput{0.1}[0](1.2,0.0){\footnotesize $(\Z/2)^3 \rtimes \Sthree$}
        \psdots(0.6,0.5)
	\uput{0.1}[180](0.6,0.5){\footnotesize $\Dthree$}
        \psdots(1.2,0.5)
        \psline(0.6,0.5)(1.2,0.5)
	\uput{0.1}[0](1.2,0.5){\footnotesize $\Dthree \times \Z/2$}
\end{pspicture} } 
\newcommand{\ttsTen}{ 
\begin{pspicture}(-0.6,-0.1)(3.2,0.99)
	\uput{0.1}[270](-0.0,0.0){\footnotesize $\Sfour$}
        \psdots(-0.0,0.0)
        \psdots(0.6,0.0)
        \psline(-0.0,0.0)(0.4,0.0)
        \psline(0.4,0.0)(0.8,0.0)
	\uput{0.1}[270](0.6,0.0){\footnotesize $\Dthree$}
        \psdots(1.2,0.0)
        \psline(0.8,0.0)(1.2,0.0)
	\uput{0.1}[0](1.2,0.0){\footnotesize $\Dthree \times \Z/2$}
        \psdots(0.25,0.433)
	\uput{0.15}[180](0.25,0.433){\footnotesize $\Dthree$}
        \psline(-0.0,0.0)(0.25,0.433)
        \psdots(1.2,0.5)
	\uput{0.1}[180](1.2,0.5){\footnotesize $\Dsix$}
        \psdots(1.8,0.5)
        \psline(1.2,0.5)(1.8,0.5)
	\uput{0.1}[0](1.8,0.5){\footnotesize $\Dsix \times \Z/2$}
\end{pspicture} } 
\newcommand{\ttsEleven}{ 
\begin{pspicture}(-1.2,-0.4)(2.8,0.3)
	\uput{0.2}[180](-0.6,0.0){\footnotesize $\Dthree$}
        \psdots(-0.6,0.0)
        \psdots(0.4,0.0)
        \psline(-0.6,0.0)(0.2,0.0)
        \psline(0.2,0.0)(0.6,0.0)
	\uput{0}[0](-0.2,-0.3){\footnotesize $\Dthree\times \Z/2$}
        \psdots(1.4,0.0)
        \psline(0.6,0.0)(1.4,0.0)
	\uput{0.2}[0](1.4,0.0){\footnotesize $\Dthree$}
        \psdots(2.5,0.0)
	\uput{0.2}[0](2.5,0.0){\footnotesize $\Dsix$}
\end{pspicture} } 
\newcommand{\ttsThirteen}{ 
\begin{pspicture}(-0.4,0.3)(2.8,1.2)
	\psdots(0.6,1.0)
	\uput{0.1}[0](0.6,1.0){\footnotesize $\Dsix$}
	\psdots(1.4,1.0)
	\uput{0.1}[0](1.4,1.0){\footnotesize $\Dthree$}
	\psdots(2.2,1.0)
	\uput{0.1}[0](2.2,1.0){\footnotesize $\Dthree$}
        \psdots(0.6,0.5)
	\uput{0.1}[180](0.6,0.5){\footnotesize $\Dthree$}
        \psdots(1.2,0.5)
        \psline(0.6,0.5)(1.2,0.5)
	\uput{0.1}[0](1.2,0.5){\footnotesize $\Dthree \times \Z/2$}
\end{pspicture} } 
\newcommand{\ttsFourteen}{ 
\begin{pspicture}(-0.6,-0.1)(3.2,0.99)
        \psdots(0.6,0.0)
	\uput{0.1}[180](0.6,0.0){\footnotesize $\Dsix$}
        \psdots(1.2,0.0)
        \psline(0.6,0.0)(1.2,0.0)
	\uput{0.1}[0](1.2,0.0){\footnotesize $\Dsix \times \Z/2$}
        \psdots(0.6,0.5)
	\uput{0.1}[180](0.6,0.5){\footnotesize $\Dsix$}
        \psdots(1.2,0.5)
        \psline(0.6,0.5)(1.2,0.5)
	\uput{0.1}[0](1.2,0.5){\footnotesize $\Dsix \times \Z/2$}
	\psdots(2.99,0.25)
	\uput{0.1}[0](2.99,0.25){\footnotesize $\Dthree$}
\end{pspicture} } 
\newcommand{\ttsFifteen}{ 
\begin{pspicture}(0.1,-0.1)(3.2,0.99)
        \psdots(0.6,0.0)
	\uput{0.1}[180](0.6,0.0){\footnotesize $\Dthree$}
        \psdots(1.2,0.0)
        \psline(0.6,0.0)(1.2,0.0)
	\uput{0.1}[0](1.2,0.0){\footnotesize $\Dthree \times \Z/2$}
        \psdots(0.6,0.5)
	\uput{0.1}[180](0.6,0.5){\footnotesize $\Dthree$}
        \psdots(1.2,0.5)
        \psline(0.6,0.5)(1.2,0.5)
	\uput{0.1}[0](1.2,0.5){\footnotesize $\Dthree \times \Z/2$}
	\psdots(2.99,0.25)
	\uput{0.1}[0](2.99,0.25){\footnotesize $\Dsix$}
\end{pspicture} } 
\newcommand{\ttsSixteen}{ 
\begin{pspicture}(-0.6,-0.3)(3.2,0.99)
	\uput{0.1}[180](0.0,0.0){\footnotesize $\Dthree$}
        \psdots(0.0,0.0)
        \psline(0.0,0.0)(0.6,0.0)
        \psdots(0.6,0.0)
	\uput{0.1}[270](0.6,0.0){\footnotesize $\Sfour$}
        \psline(0.6,0.0)(1.2,0.0)
        \psdots(1.2,0.0)
	\uput{0.1}[0](1.2,0.0){\footnotesize $\Dthree$}
	\psdots(0,0.5)
        \psline(0,0.5)(0.6,0.5)
	\uput{0.1}[180](0,0.5){\footnotesize $\Dthree$}
        \psdots(0.6,0.5)
	\uput{0.1}[90](0.6,0.5){\footnotesize $\Sfour$}
        \psdots(1.2,0.5)
        \psline(0.6,0.5)(1.2,0.5)
	\uput{0.1}[0](1.2,0.5){\footnotesize $\Dthree$}
	\psdots(2.2,0.25)
	\uput{0.1}[0](2.2,0.25){\footnotesize $\Dthree$}
\end{pspicture} } 
\newcommand{\ttsEighteen}{ 
\begin{pspicture}(-1.6,-0.1)(2.6,0.8)
	\uput{0}[180](0.0,0.2){\footnotesize $(\Z/2)^3 \rtimes \Sthree$}
        \psdots(-0.2,0.0)
        \psdots(0.4,0.0)
        \psline(-0.2,0.0)(0.2,0.0)
        \psline(0.2,0.0)(0.6,0.0)
	\uput{0}[0](0.3,0.2){\footnotesize $\Dthree$}
        \psdots(1.0,0.0)
        \psline(0.6,0.0)(1.0,0.0)
	\uput{0}[0](1.0,0.2){\footnotesize $(\Z/2)^3 \rtimes \Sthree$}
	\uput{0}[180](0.0,0.7){\footnotesize $(\Z/2)^3 \rtimes \Sthree$}
        \psdots(-0.2,0.5)
        \psdots(0.4,0.5)
        \psline(-0.2,0.5)(0.2,0.5)
        \psline(0.2,0.5)(0.6,0.5)
	\uput{0}[0](0.3,0.7){\footnotesize $\Dthree$}
        \psdots(1.0,0.5)
        \psline(0.6,0.5)(1.0,0.5)
	\uput{0}[0](1.0,0.7){\footnotesize $(\Z/2)^3 \rtimes \Sthree$}
\end{pspicture} } 
\newcommand{\ttsNineteen}{ 
\begin{pspicture}(-1.6,-0.1)(2.6,0.3)
	\uput{0}[180](0.0,0.2){\footnotesize $(\Z/2)^3 \rtimes \Sthree$}
        \psdots(-0.2,0.0)
        \psdots(0.4,0.0)
        \psline(-0.2,0.0)(0.2,0.0)
        \psline(0.2,0.0)(0.6,0.0)
	\uput{0}[0](0.3,0.2){\footnotesize $\Dthree$}
        \psdots(1.0,0.0)
        \psline(0.6,0.0)(1.0,0.0)
	\uput{0}[0](1.0,0.2){\footnotesize $\Icos$}
\end{pspicture} } 
\newcommand{\ttsTwenty}{ 
\begin{pspicture}(-0.6,-0.1)(3.2,0.99)
	\uput{0.1}[270](-0.0,0.0){\footnotesize $\Sfour$}
        \psdots(-0.0,0.0)
        \psdots(0.6,0.0)
        \psline(-0.0,0.0)(0.4,0.0)
        \psline(0.4,0.0)(0.8,0.0)
	\uput{0.1}[270](0.6,0.0){\footnotesize $\Dthree$}
        \psdots(1.2,0.0)
        \psline(0.8,0.0)(1.2,0.0)
	\uput{0.1}[0](1.2,0.0){\footnotesize $\Icos$}
        \psdots(0.25,0.433)
	\uput{0.15}[180](0.25,0.433){\footnotesize $\Dthree$}
        \psdots(0.5,0.866)
	\uput{0.1}[0](0.5,0.866){\footnotesize $\Icos$}
        \psline(-0.0,0.0)(0.5,0.866)
\end{pspicture} }
\newcommand{\ttsTwentyone}{ 
\begin{pspicture}(-1.2,-0.3)(2.2,0.99)
	\uput{0.1}[180](0.0,0.0){\footnotesize $\Icos$}
        \psdots(0.0,0.0)
        \psline(0.0,0.0)(0.6,0.0)
        \psdots(0.6,0.0)
	\uput{0.1}[270](0.6,0.0){\footnotesize $\Dthree$}
        \psline(0.6,0.0)(1.2,0.0)
        \psdots(1.2,0.0)
	\uput{0.1}[0](1.2,0.0){\footnotesize $\Icos$}
	\psdots(0,0.5)
        \psline(0,0.5)(0.6,0.5)
	\uput{0.1}[180](0,0.5){\footnotesize $\Icos$}
        \psdots(0.6,0.5)
	\uput{0.1}[90](0.6,0.5){\footnotesize $\Dthree$}
        \psdots(1.2,0.5)
        \psline(0.6,0.5)(1.2,0.5)
	\uput{0.1}[0](1.2,0.5){\footnotesize $\Icos$}
\end{pspicture} } 
\newcommand{\ttsTwentytwo}{ 
\begin{pspicture}(-1.2,-0.3)(2.2,0.99)
	\uput{0.1}[180](0.0,0.0){\footnotesize $\Icos$}
        \psdots(0.0,0.0)
        \psline(0.0,0.0)(0.6,0.0)
        \psdots(0.6,0.0)
	\uput{0.1}[270](0.6,0.0){\footnotesize $\Dthree$}
        \psline(0.6,0.0)(1.2,0.0)
        \psdots(1.2,0.0)
	\uput{0.1}[0](1.2,0.0){\footnotesize $\Dthree \times \Z/2$}
	\psdots(0,0.5)
        \psline(0,0.5)(0.6,0.5)
	\uput{0.1}[180](0,0.5){\footnotesize $\Icos$}
        \psdots(0.6,0.5)
	\uput{0.1}[90](0.6,0.5){\footnotesize $\Dthree$}
        \psdots(1.2,0.5)
        \psline(0.6,0.5)(1.2,0.5)
	\uput{0.1}[0](1.2,0.5){\footnotesize $\Dthree \times \Z/2$}
\end{pspicture} } 
\newcommand{\ttsTwentythree}{ 
\begin{pspicture}(-0.6,-0.1)(3.2,0.99)
	\uput{0.2}[180](-0.0,0.0){\footnotesize $\Sfour$}
        \psdots(-0.0,0.0)
        \psdots(0.6,0.0)
        \psline(-0.0,0.0)(0.4,0.0)
        \psline(0.4,0.0)(0.8,0.0)
	\uput{0.1}[270](0.6,0.0){\footnotesize $\Dthree$}
        \psdots(1.2,0.0)
        \psline(0.8,0.0)(1.2,0.0)
	\uput{0.1}[0](1.2,0.0){\footnotesize $\Icos$}
        \psdots(0.25,0.433)
	\uput{0.15}[180](0.25,0.433){\footnotesize $\Dthree$}
        \psdots(0.5,0.866)
        \psline(-0.0,0.0)(0.5,0.866)
	\uput{0.2}[180](0.5,0.866){\footnotesize $\Sfour$}
        \psline(0.5,0.866)(1.1,0.866)
        \psdots(1.1,0.866)
	\uput{0.1}[270](1.1,0.866){\footnotesize $\Dthree$}
        \psline(1.7,0.866)(1.1,0.866)
        \psdots(1.7,0.866)
	\uput{0.1}[0](1.7,0.866){\footnotesize $\Icos$}
\end{pspicture} } 
\newcommand{\ttsTwentyfour}{ 
\begin{pspicture}(-1.2,-0.3)(.2,0.39)
	\uput{0.1}[180](0.0,0.0){\footnotesize $\Icos$}
        \psdots(0.0,0.0)
        \psline(0.0,0.0)(0.6,0.0)
        \psdots(0.6,0.0)
	\uput{0.1}[270](0.6,0.0){\footnotesize $\Dthree$}
        \psline(0.6,0.0)(1.2,0.0)
        \psdots(1.2,0.0)
	\uput{0.1}[0](1.2,0.0){\footnotesize $\Icos$}
\end{pspicture} } 
\newcommand{\ttsTwentyfive}{ 
\begin{pspicture}(-0.6,-0.1)(3.2,0.99)
	\uput{0.2}[180](-0.0,0.0){\footnotesize $\Sfour$}
        \psdots(-0.0,0.0)
        \psdots(0.6,0.0)
        \psline(-0.0,0.0)(0.4,0.0)
        \psline(0.4,0.0)(0.8,0.0)
	\uput{0.1}[270](0.6,0.0){\footnotesize $\Dthree$}
        \psdots(1.2,0.0)
        \psline(0.8,0.0)(1.2,0.0)
	\uput{0.1}[0](1.2,0.0){\footnotesize $(\Z/2)^3 \rtimes \Sthree$}
        \psdots(0.25,0.433)
	\uput{0.15}[180](0.25,0.433){\footnotesize $\Dthree$}
        \psdots(0.5,0.866)
        \psline(-0.0,0.0)(0.5,0.866)
	\uput{0.2}[180](0.5,0.866){\footnotesize $\Sfour$}
        \psline(0.5,0.866)(1.1,0.866)
        \psdots(1.1,0.866)
	\uput{0.1}[270](1.1,0.866){\footnotesize $\Dthree$}
        \psline(1.7,0.866)(1.1,0.866)
        \psdots(1.7,0.866)
	\uput{0.1}[0](1.7,0.866){\footnotesize $(\Z/2)^3 \rtimes \Sthree$}
\end{pspicture} } 
\newcommand{\ttsTwentysix}{ 
\begin{pspicture}(-1.2,-0.3)(2.2,0.99)
	\uput{0.1}[180](0.0,0.0){\footnotesize $\Icos$}
        \psdots(0.0,0.0)
        \psline(0.0,0.0)(0.6,0.0)
        \psdots(0.6,0.0)
	\uput{0.1}[270](0.6,0.0){\footnotesize $\Dthree$}
        \psline(0.6,0.0)(1.2,0.0)
        \psdots(1.2,0.0)
	\uput{0.1}[0](1.2,0.0){\footnotesize $(\Z/2)^3 \rtimes \Sthree$}
	\psdots(0,0.5)
        \psline(0,0.5)(0.6,0.5)
	\uput{0.1}[180](0,0.5){\footnotesize $\Icos$}
        \psdots(0.6,0.5)
	\uput{0.1}[90](0.6,0.5){\footnotesize $\Dthree$}
        \psdots(1.2,0.5)
        \psline(0.6,0.5)(1.2,0.5)
	\uput{0.1}[0](1.2,0.5){\footnotesize $(\Z/2)^3 \rtimes \Sthree$}
\end{pspicture} }
\newcommand{\ttsTwentyseven}{ 
\begin{pspicture}(-0.6,-0.3)(3.2,0.6)
        \psdots(0.6,-0.5)
	\uput{0.1}[180](0.6,-0.5){\footnotesize $\Dthree$}
        \psdots(1.2,-0.5)
        \psline(0.6,-0.5)(1.2,-0.5)
	\uput{0.1}[0](1.2,-0.5){\footnotesize $\Icos$}
        \psdots(0.6,0.0)
	\uput{0.1}[180](0.6,0.0){\footnotesize $\Dthree$}
        \psdots(1.2,0.0)
        \psline(0.6,0.0)(1.2,0.0)
	\uput{0.1}[0](1.2,0.0){\footnotesize $\Icos$}
        \psdots(0.6,0.5)
	\uput{0.1}[180](0.6,0.5){\footnotesize $\Dthree$}
        \psdots(1.2,0.5)
        \psline(0.6,0.5)(1.2,0.5)
	\uput{0.1}[0](1.2,0.5){\footnotesize $\Dthree \times \Z/2$}
\end{pspicture} }
\newcommand{\ttsTwentyeight}{ 
\begin{pspicture}(-0.6,-0.1)(3.2,0.99)
        \psdots(0.6,0.0)
	\uput{0.1}[180](0.6,0.0){\footnotesize $\Dthree$}
        \psdots(1.2,0.0)
        \psline(0.6,0.0)(1.2,0.0)
	\uput{0.1}[0](1.2,0.0){\footnotesize $\Icos$}
        \psdots(0.6,0.5)
	\uput{0.1}[180](0.6,0.5){\footnotesize $\Dsix$}
        \psdots(1.2,0.5)
        \psline(0.6,0.5)(1.2,0.5)
	\uput{0.1}[0](1.2,0.5){\footnotesize $\Dsix \times \Z/2$}
\end{pspicture} } 
\newcommand{\ttsTwentynine}{ 
\begin{pspicture}(0.1,-0.1)(3.2,0.99)
        \psdots(0.6,0.0)
	\uput{0.1}[180](0.6,0.0){\footnotesize $\Dthree$}
        \psdots(1.2,0.0)
        \psline(0.6,0.0)(1.2,0.0)
	\uput{0.1}[0](1.2,0.0){\footnotesize $\Icos$}
        \psdots(0.6,0.5)
	\uput{0.1}[180](0.6,0.5){\footnotesize $\Dthree$}
        \psdots(1.2,0.5)
        \psline(0.6,0.5)(1.2,0.5)
	\uput{0.1}[0](1.2,0.5){\footnotesize $\Icos$}
	\psdots(2.99,0.25)
	\uput{0.1}[0](2.99,0.25){\footnotesize $\Dsix$}
\end{pspicture} } 
\newcommand{\ttsThirty}{ 
\begin{pspicture}(0.1,-0.1)(3.5,0.99)
        \psdots(0.6,0.0)
	\uput{0.1}[180](0.6,0.0){\footnotesize $\Dthree$}
        \psdots(1.2,0.0)
        \psline(0.6,0.0)(1.2,0.0)
	\uput{0.1}[0](1.2,0.0){\footnotesize $(\Z/2)^3 \rtimes \Sthree$}
        \psdots(0.6,0.5)
	\uput{0.1}[180](0.6,0.5){\footnotesize $\Dthree$}
        \psdots(1.2,0.5)
        \psline(0.6,0.5)(1.2,0.5)
	\uput{0.1}[0](1.2,0.5){\footnotesize $(\Z/2)^3 \rtimes \Sthree$}
	\psdots(3.2,0.25)
	\uput{0.1}[0](3.2,0.25){\footnotesize $\Dsix$}
\end{pspicture} } 
\newcommand{\ttsThirtyone}{ 
\begin{pspicture}(-1.6,-0.1)(2.6,0.3)
	\uput{0}[180](0.0,0.2){\footnotesize $(\Z/2)^3 \rtimes \Sthree$}
        \psdots(-0.2,0.0)
        \psdots(0.4,0.0)
        \psline(-0.2,0.0)(0.2,0.0)
        \psline(0.2,0.0)(0.6,0.0)
	\uput{0}[0](0.3,0.2){\footnotesize $\Dthree$}
        \psdots(1.0,0.0)
        \psline(0.6,0.0)(1.0,0.0)
	\uput{0}[0](1.0,0.2){\footnotesize $(\Z/2)^3 \rtimes \Sthree$}
\end{pspicture} } 
\newcommand{\ttsThirtytwo}{ 
\begin{pspicture}(-0.6,-0.3)(3.2,0.99)
	\uput{0.1}[180](0.0,0.0){\footnotesize $\Dthree$}
        \psdots(0.0,0.0)
        \psline(0.0,0.0)(0.6,0.0)
        \psdots(0.6,0.0)
	\uput{0.1}[270](0.6,0.0){\footnotesize $\Sfour$}
        \psline(0.6,0.0)(1.2,0.0)
        \psdots(1.2,0.0)
	\uput{0.1}[270](1.2,0.0){\footnotesize $\Dthree$}
        \psline(1.2,0.0)(2.4,0.0)
        \psdots(1.8,0.0)
	\uput{0.1}[270](1.8,0.0){\footnotesize $\Sfour$}
        \psdots(2.4,0.0)
	\uput{0.1}[0](2.4,0.0){\footnotesize $\Dthree$}
        \psdots(1.2,0.5)
	\uput{0.1}[0](1.2,0.5){\footnotesize $\Dsix$}
\end{pspicture} }
\newcommand{\ftsNineteen}{ 
\begin{pspicture}(-1.2,-0.3)(.2,0.39)
	\uput{0.1}[180](0.0,0.0){\footnotesize $\Icos$}
        \psdots(0.0,0.0)
        \psline(0.0,0.0)(0.6,0.0)
        \psdots(0.6,0.0)
	\uput{0.1}[90](0.6,0.0){\footnotesize $\Dfive$}
        \psline(0.6,0.0)(1.2,0.0)
        \psdots(1.2,0.0)
	\uput{0.1}[0](1.2,0.0){\footnotesize $\Dfive \times \Z/2$}
\end{pspicture} } 
\newcommand{\ftsTwenty}{ 
\begin{pspicture}(-1.2,-0.3)(.2,0.39)
	\uput{0.1}[180](0.0,0.0){\footnotesize $\Icos$}
        \psdots(0.0,0.0)
        \psline(0.0,0.0)(0.6,0.0)
        \psdots(0.6,0.0)
	\uput{0.1}[90](0.6,0.0){\footnotesize $\Dfive$}
        \psline(0.6,0.0)(1.2,0.0)
        \psdots(1.2,0.0)
	\uput{0.1}[0](1.2,0.0){\footnotesize $\Icos$}
\end{pspicture} } 
\newcommand{\ftsTwentyone}{ 
\begin{pspicture}(-1.2,-0.3)(2.2,0.99)
	\uput{0.1}[180](0.0,0.0){\footnotesize $\Icos$}
        \psdots(0.0,0.0)
        \psline(0.0,0.0)(0.6,0.0)
        \psdots(0.6,0.0)
	\uput{0.1}[270](0.6,0.0){\footnotesize $\Dfive$}
        \psline(0.6,0.0)(1.2,0.0)
        \psdots(1.2,0.0)
	\uput{0.1}[0](1.2,0.0){\footnotesize $\Icos$}
	\psdots(0,0.5)
        \psline(0,0.5)(0.6,0.5)
	\uput{0.1}[180](0,0.5){\footnotesize $\Icos$}
        \psdots(0.6,0.5)
	\uput{0.1}[90](0.6,0.5){\footnotesize $\Dfive$}
        \psdots(1.2,0.5)
        \psline(0.6,0.5)(1.2,0.5)
	\uput{0.1}[0](1.2,0.5){\footnotesize $\Icos$}
\end{pspicture} } 
\newcommand{\ftsTwentyfour}{ 
\begin{pspicture}(-1.2,-0.3)(2.2,0.99)
	\uput{0.1}[180](0.0,0.0){\footnotesize $\Icos$}
        \psdots(0.0,0.0)
        \psline(0.0,0.0)(0.6,0.0)
        \psdots(0.6,0.0)
	\uput{0.1}[270](0.6,0.0){\footnotesize $\Dfive$}
        \psline(0.6,0.0)(1.2,0.0)
        \psdots(1.2,0.0)
	\uput{0.1}[0](1.2,0.0){\footnotesize $\Dfive \times \Z/2$}
	\psdots(0,0.5)
        \psline(0,0.5)(0.6,0.5)
	\uput{0.1}[180](0,0.5){\footnotesize $\Icos$}
        \psdots(0.6,0.5)
	\uput{0.1}[90](0.6,0.5){\footnotesize $\Dfive$}
        \psdots(1.2,0.5)
        \psline(0.6,0.5)(1.2,0.5)
	\uput{0.1}[0](1.2,0.5){\footnotesize $\Dfive \times \Z/2$}
\end{pspicture} }
\newcommand{\illustration}{ 
\begin{pspicture}(-0.4,-0.1)(1.3,0.3)
        \psdots(-0.3,0.0)
        \psline(-0.3,0.0)(0.2,0.0)
        \uput{0}[0](-0.2,0.2){\footnotesize $\beta e$}
        \psdots(0.2,0.0)
        \uput{0}[0](0.1,-0.2){\footnotesize $v$}
        \psline(0.2,0.0)(0.7,0.0)
        \uput{0}[0](0.4,0.2){\footnotesize $e$}
        \psdots(0.7,0.0)
        \psline(0.7,0.0)(1.2,0.0)
        \uput{0}[0](0.8,0.2){\footnotesize $\gamma e$}
        \psdots(1.2,0.0)
        \uput{0}[0](1.1,-0.2){\footnotesize $\gamma v$}
\end{pspicture} }
\newcommand{\CoxeterDiagramOfSfive}{ 
\begin{pspicture}(-0.3,-0.3)(1.1,0.3)
        \psdots(-0.2,0.0)
        \psdots(0.2,0.0)
        \psline(-0.2,0.0)(0.2,0.0)
        \psdots(0.6,0.0)
        \psline(0.2,0.0)(0.6,0.0)
        \psdots(1.0,0.0)
        \psline(0.6,0.0)(1.0,0.0)
\end{pspicture} }
\newcommand{\CoxeterDiagramOfSfour}{ 
\begin{pspicture}(-0.4,-0.1)(0.8,0.3)
        \psdots(-0.3,0.0)
        \psline(-0.3,0.0)(0.2,0.0)
        \psdots(0.2,0.0)
        \psline(0.2,0.0)(0.7,0.0)
        \psdots(0.7,0.0)
\end{pspicture} }
\newcommand{\CoxeterDiagramDfour}{ 
\begin{pspicture}(-0.3,-0.45)(0.6,0.45)
        \psdots(-0.25,0.433)
        \psdots(-0.25,-0.433)
        \psline(-0.25,0.433)(0.0,0.0)
        \psline(-0.25,-0.433)(0.0,0.0)
        \psdots(0.0,0.0)
        \psdots(0.5,0.0)
        \psline(0.5,0.0)(0.0,0.0)
\end{pspicture} } 
\newcommand{\Bthree}{ 
\begin{pspicture}(-0.4,-0.1)(0.8,0.3)
        \psdots(-0.3,0.0)
        \psline(-0.3,0.0)(0.2,0.0)
        \uput{0}[0](-0.1,0.2){\footnotesize $4$}
        \psdots(0.2,0.0)
        \psline(0.2,0.0)(0.7,0.0)
        \psdots(0.7,0.0)
\end{pspicture} }
\newcommand{\Bfour}{ 
\begin{pspicture}(-0.3,-0.3)(1.1,0.3)
        \psdots(-0.2,0.0)
        \psdots(0.2,0.0)
        \psline(-0.2,0.0)(0.2,0.0)
        \psdots(0.6,0.0)
        \psline(0.2,0.0)(0.6,0.0)
	\uput{0}[0](-0.1,0.2){\footnotesize $4$}
        \psdots(1.0,0.0)
        \psline(0.6,0.0)(1.0,0.0)
\end{pspicture} }
\newcommand{\Hthree}{ 
\begin{pspicture}(-0.4,-0.1)(0.8,0.3)
        \psdots(-0.3,0.0)
        \psline(-0.3,0.0)(0.2,0.0)
        \uput{0}[0](-0.1,0.2){\footnotesize $5$}
        \psdots(0.2,0.0)
        \psline(0.2,0.0)(0.7,0.0)
        \psdots(0.7,0.0)
\end{pspicture} }
\newcommand{\CTone}{ 
\begin{pspicture}(-0.3,-0.3)(1.1,0.3)
        \psdots(-0.2,0.0)
        \psdots(0.2,0.0)
        \psline(-0.2,0.0)(0.2,0.0)
        \psdots(0.6,0.0)
        \psline(0.2,0.0)(0.6,0.0)
	\uput{0}[0](0.3,0.2){\footnotesize $4$}
        \psdots(1.0,0.0)
        \psline(0.6,0.0)(1.0,0.0)
	\uput{0}[0](0.7,0.2){\footnotesize $4$}
\end{pspicture} } 
\newcommand{\CTtwo}{ 
\begin{pspicture}(-0.3,-0.45)(0.6,0.45)
        \psdots(-0.25,0.433)
        \psdots(-0.25,-0.433)
        \psline(-0.25,0.433)(0.0,0.0)
        \psline(-0.25,-0.433)(0.0,0.0)
        \psdots(0.0,0.0)
        \psdots(0.5,0.0)
        \psline(0.5,0.0)(0.0,0.0)
	\uput{0.04}[45](-0.125,0.2165){\footnotesize $4$}
	\uput{0.04}[315](-0.125,-0.2165){\footnotesize $4$}
\end{pspicture} } 
\newcommand{\CTthree}{ 
\begin{pspicture}(-0.1,-0.1)(0.8,0.8)
        \psdots(0.0,0.0)
        \psdots(0.6,0.0)
        \psdots(0.6,0.6)
        \psdots(0,0.6)
        \psline(0.0,0.0)(0.6,0.0)
        \psline(0.6,0.0)(0.6,0.6)
        \psline(0.6,0.6)(0,0.6)
        \psline(0,0.6)(0.0,0.0)
	\uput{0.1}[270](0.3,0.0){\footnotesize $4$}
	\uput{0.1}[0](0.6,0.3){\footnotesize $4$}
\end{pspicture} } 
\newcommand{\CTseven}{ 
\begin{pspicture}(-0.3,-0.3)(1.1,0.3)
        \psdots(-0.2,0.0)
        \psdots(0.2,0.0)
        \psline(-0.2,0.0)(0.2,0.0)
        \psdots(0.6,0.0)
        \psline(0.2,0.0)(0.6,0.0)
	\uput{0}[0](-0.1,0.2){\footnotesize $4$}
        \psdots(1.0,0.0)
        \psline(0.6,0.0)(1.0,0.0)
	\uput{0}[0](0.7,0.2){\footnotesize $6$}
\end{pspicture} } 
\newcommand{\CTeight}{ 
\begin{pspicture}(-0.3,-0.65)(0.6,0.65)
        \psdots(-0.25,0.433)
        \psdots(-0.25,-0.433)
        \psline(-0.25,0.433)(0.0,0.0)
        \psline(-0.25,-0.433)(0.0,0.0)
        \psdots(0.0,0.0)
        \psdots(0.5,0.0)
        \psline(0.5,0.0)(0.0,0.0)
        \psline(-0.25,0.433)(-0.25,-0.433)
\end{pspicture} } 
\newcommand{\CTnine}{ 
\begin{pspicture}(-0.3,-0.55)(0.6,0.55)
        \psdots(-0.25,0.433)
        \psdots(-0.25,-0.433)
        \psline(-0.25,0.433)(0.0,0.0)
        \psline(-0.25,-0.433)(0.0,0.0)
        \psdots(0.0,0.0)
        \psdots(0.5,0.0)
        \psline(0.5,0.0)(0.0,0.0)
	\uput{0}[0](0.25,0.2){\footnotesize $4$}
        \psline(-0.25,0.433)(-0.25,-0.433)
\end{pspicture} } 
\newcommand{\CTten}{ 
\begin{pspicture}(-0.3,-0.3)(1.1,0.3)
        \psdots(-0.2,0.0)
        \psdots(0.2,0.0)
        \psline(-0.2,0.0)(0.2,0.0)
        \psdots(0.6,0.0)
        \psline(0.2,0.0)(0.6,0.0)
        \psdots(1.0,0.0)
        \psline(0.6,0.0)(1.0,0.0)
	\uput{0}[0](0.7,0.2){\footnotesize $6$}
\end{pspicture} } 
\newcommand{\CTeleven}{ 
\begin{pspicture}(-0.3,-0.45)(0.6,0.45)
        \psdots(-0.25,0.433)
        \psdots(-0.25,-0.433)
        \psline(-0.25,0.433)(0.0,0.0)
        \psline(-0.25,-0.433)(0.0,0.0)
        \psdots(0.0,0.0)
        \psdots(0.5,0.0)
        \psline(0.5,0.0)(0.0,0.0)
	\uput{0.04}[45](-0.125,0.2165){\footnotesize $6$}
\end{pspicture} } 
\newcommand{\CTtwelve}{ 
\begin{pspicture}(-0.3,-0.45)(0.6,0.45)
        \psdots(-0.25,0.433)
        \psline(-0.25,0.433)(0.5,0.0)
        \psline(-0.25,-0.433)(0.5,0.0)
        \psdots(-0.25,-0.433)
        \psline(-0.25,0.433)(0.0,0.0)
        \psline(-0.25,-0.433)(0.0,0.0)
        \psdots(0.0,0.0)
        \psdots(0.5,0.0)
        \psline(0.5,0.0)(0.0,0.0)
        \psline(-0.25,0.433)(-0.25,-0.433)
\end{pspicture} } 
\newcommand{\CTthirteen}{ 
\begin{pspicture}(-0.3,-0.55)(0.6,0.55)
        \psdots(-0.25,0.433)
        \psdots(-0.25,-0.433)
        \psline(-0.25,0.433)(0.0,0.0)
        \psline(-0.25,-0.433)(0.0,0.0)
        \psdots(0.0,0.0)
        \psdots(0.5,0.0)
        \psline(0.5,0.0)(0.0,0.0)
	\uput{0}[0](0.25,0.2){\footnotesize $6$}
        \psline(-0.25,0.433)(-0.25,-0.433)
\end{pspicture} } 
\newcommand{\CTfourteen}{ 
\begin{pspicture}(-0.3,-0.3)(1.1,0.3)
        \psdots(-0.2,0.0)
        \psdots(0.2,0.0)
        \psline(-0.2,0.0)(0.2,0.0)
        \psdots(0.6,0.0)
        \psline(0.2,0.0)(0.6,0.0)
	\uput{0}[0](-0.1,0.2){\footnotesize $6$}
        \psdots(1.0,0.0)
        \psline(0.6,0.0)(1.0,0.0)
	\uput{0}[0](0.7,0.2){\footnotesize $6$}
\end{pspicture} } 
\newcommand{\CTfifteen}{ 
\begin{pspicture}(-0.3,-0.3)(1.1,0.3)
        \psdots(-0.2,0.0)
        \psdots(0.2,0.0)
        \psline(-0.2,0.0)(0.2,0.0)
        \psdots(0.6,0.0)
        \psline(0.2,0.0)(0.6,0.0)
        \psdots(1.0,0.0)
        \psline(0.6,0.0)(1.0,0.0)
	\uput{0}[0](0.3,0.2){\footnotesize $6$}
\end{pspicture} }
\newcommand{\CTsixteen}{ 
\begin{pspicture}(-0.3,-0.45)(0.6,0.45)
        \psdots(-0.25,0.433)
        \psline(-0.25,0.433)(0.5,0.0)
        \psline(-0.25,-0.433)(0.5,0.0)
        \psdots(-0.25,-0.433)
        \psline(-0.25,0.433)(0.0,0.0)
        \psline(-0.25,-0.433)(0.0,0.0)
        \psdots(0.0,0.0)
        \psdots(0.5,0.0)
        \psline(0.5,0.0)(0.0,0.0)
\end{pspicture} } 
\newcommand{\CTseventeen}{ 
\begin{pspicture}(-0.1,-0.1)(0.8,0.8)
        \psdots(0.0,0.0)
        \psdots(0.6,0.0)
        \psdots(0.6,0.6)
        \psdots(0,0.6)
        \psline(0.0,0.0)(0.6,0.0)
        \psline(0.6,0.0)(0.6,0.6)
        \psline(0.6,0.6)(0,0.6)
        \psline(0,0.6)(0.0,0.0)
	\uput{0.1}[180](0.0,0.3){\footnotesize $6$}
	\uput{0.1}[0](0.6,0.3){\footnotesize $6$}
\end{pspicture} } 
\newcommand{\CTeighteen}{ 
\begin{pspicture}(-0.1,-0.1)(0.8,0.8)
        \psdots(0.0,0.0)
        \psdots(0.6,0.0)
        \psdots(0.6,0.6)
        \psdots(0,0.6)
        \psline(0.0,0.0)(0.6,0.0)
        \psline(0.6,0.0)(0.6,0.6)
        \psline(0.6,0.6)(0,0.6)
        \psline(0,0.6)(0.0,0.0)
	\uput{0.1}[180](0.0,0.3){\footnotesize $4$}
	\uput{0.1}[0](0.6,0.3){\footnotesize $4$}
\end{pspicture} } 
\newcommand{\CTnineteen}{ 
\begin{pspicture}(-0.3,-0.3)(1.1,0.3)
        \psdots(-0.2,0.0)
        \psdots(0.2,0.0)
        \psline(-0.2,0.0)(0.2,0.0)
        \psdots(0.6,0.0)
        \psline(0.2,0.0)(0.6,0.0)
	\uput{0}[0](-0.1,0.2){\footnotesize $4$}
        \psdots(1.0,0.0)
        \psline(0.6,0.0)(1.0,0.0)
	\uput{0}[0](0.7,0.2){\footnotesize $5$}
\end{pspicture} } 
\newcommand{\CTtwenty}{ 
\begin{pspicture}(-0.3,-0.45)(0.6,0.45)
        \psdots(-0.25,0.433)
        \psdots(-0.25,-0.433)
        \psline(-0.25,0.433)(0.0,0.0)
        \psline(-0.25,-0.433)(0.0,0.0)
        \psdots(0.0,0.0)
        \psdots(0.5,0.0)
        \psline(0.5,0.0)(0.0,0.0)
	\uput{0.04}[45](-0.125,0.2165){\footnotesize $5$}
\end{pspicture} } 
\newcommand{\CTtwentyone}{ 
\begin{pspicture}(-0.1,-0.1)(0.8,0.8)
        \psdots(0.0,0.0)
        \psdots(0.6,0.0)
        \psdots(0.6,0.6)
        \psdots(0,0.6)
        \psline(0.0,0.0)(0.6,0.0)
        \psline(0.6,0.0)(0.6,0.6)
        \psline(0.6,0.6)(0,0.6)
        \psline(0,0.6)(0.0,0.0)
	\uput{0.1}[180](0.0,0.3){\footnotesize $5$}
	\uput{0.1}[0](0.6,0.3){\footnotesize $5$}
\end{pspicture} } 
\newcommand{\CTtwentytwo}{ 
\begin{pspicture}(-0.3,-0.3)(1.1,0.3)
        \psdots(-0.2,0.0)
        \psdots(0.2,0.0)
        \psline(-0.2,0.0)(0.2,0.0)
        \psdots(0.6,0.0)
        \psline(0.2,0.0)(0.6,0.0)
        \psdots(1.0,0.0)
        \psline(0.6,0.0)(1.0,0.0)
	\uput{0}[0](0.3,0.2){\footnotesize $5$}
\end{pspicture} }
\newcommand{\CTtwentythree}{ 
\begin{pspicture}(-0.1,-0.1)(0.8,0.8)
        \psdots(0.0,0.0)
        \psdots(0.6,0.0)
        \psdots(0.6,0.6)
        \psdots(0,0.6)
        \psline(0.0,0.0)(0.6,0.0)
        \psline(0.6,0.0)(0.6,0.6)
        \psline(0.6,0.6)(0,0.6)
        \psline(0,0.6)(0.0,0.0)
	\uput{0.1}[0](0.6,0.3){\footnotesize $5$}
\end{pspicture} } 
\newcommand{\CTtwentyfour}{ 
\begin{pspicture}(-0.3,-0.3)(1.1,0.3)
        \psdots(-0.2,0.0)
        \psdots(0.2,0.0)
        \psline(-0.2,0.0)(0.2,0.0)
        \psdots(0.6,0.0)
        \psline(0.2,0.0)(0.6,0.0)
	\uput{0}[0](-0.1,0.2){\footnotesize $5$}
        \psdots(1.0,0.0)
        \psline(0.6,0.0)(1.0,0.0)
	\uput{0}[0](0.7,0.2){\footnotesize $5$}
\end{pspicture} } 
\newcommand{\CTtwentyfive}{ 
\begin{pspicture}(-0.1,-0.1)(0.8,0.8)
        \psdots(0.0,0.0)
        \psdots(0.6,0.0)
        \psdots(0.6,0.6)
        \psdots(0,0.6)
        \psline(0.0,0.0)(0.6,0.0)
        \psline(0.6,0.0)(0.6,0.6)
        \psline(0.6,0.6)(0,0.6)
        \psline(0,0.6)(0.0,0.0)
	\uput{0.1}[0](0.6,0.3){\footnotesize $4$}
\end{pspicture} } 
\newcommand{\CTtwentysix}{ 
\begin{pspicture}(-0.1,-0.1)(0.8,0.8)
        \psdots(0.0,0.0)
        \psdots(0.6,0.0)
        \psdots(0.6,0.6)
        \psdots(0,0.6)
        \psline(0.0,0.0)(0.6,0.0)
        \psline(0.6,0.0)(0.6,0.6)
        \psline(0.6,0.6)(0,0.6)
        \psline(0,0.6)(0.0,0.0)
	\uput{0.1}[180](0.0,0.3){\footnotesize $4$}
	\uput{0.1}[0](0.6,0.3){\footnotesize $5$}
\end{pspicture} } 
\newcommand{\CTtwentyseven}{ 
\begin{pspicture}(-0.3,-0.55)(0.6,0.55)
        \psdots(-0.25,0.433)
        \psdots(-0.25,-0.433)
        \psline(-0.25,0.433)(0.0,0.0)
        \psline(-0.25,-0.433)(0.0,0.0)
        \psdots(0.0,0.0)
        \psdots(0.5,0.0)
        \psline(0.5,0.0)(0.0,0.0)
	\uput{0}[0](0.25,0.2){\footnotesize $5$}
        \psline(-0.25,0.433)(-0.25,-0.433)
\end{pspicture} } 
\newcommand{\CTtwentyeight}{ 
\begin{pspicture}(-0.3,-0.3)(1.1,0.3)
        \psdots(-0.2,0.0)
        \psdots(0.2,0.0)
        \psline(-0.2,0.0)(0.2,0.0)
        \psdots(0.6,0.0)
        \psline(0.2,0.0)(0.6,0.0)
	\uput{0}[0](-0.1,0.2){\footnotesize $5$}
        \psdots(1.0,0.0)
        \psline(0.6,0.0)(1.0,0.0)
	\uput{0}[0](0.7,0.2){\footnotesize $6$}
\end{pspicture} } 
\newcommand{\CTtwentynine}{ 
\begin{pspicture}(-0.1,-0.1)(0.8,0.8)
        \psdots(0.0,0.0)
        \psdots(0.6,0.0)
        \psdots(0.6,0.6)
        \psdots(0,0.6)
        \psline(0.0,0.0)(0.6,0.0)
        \psline(0.6,0.0)(0.6,0.6)
        \psline(0.6,0.6)(0,0.6)
        \psline(0,0.6)(0.0,0.0)
	\uput{0.1}[180](0.0,0.3){\footnotesize $6$}
	\uput{0.1}[0](0.6,0.3){\footnotesize $5$}
\end{pspicture} } 
\newcommand{\CTthirty}{ 
\begin{pspicture}(-0.1,-0.1)(0.8,0.8)
        \psdots(0.0,0.0)
        \psdots(0.6,0.0)
        \psdots(0.6,0.6)
        \psdots(0,0.6)
        \psline(0.0,0.0)(0.6,0.0)
        \psline(0.6,0.0)(0.6,0.6)
        \psline(0.6,0.6)(0,0.6)
        \psline(0,0.6)(0.0,0.0)
	\uput{0.1}[180](0.0,0.3){\footnotesize $6$}
	\uput{0.1}[0](0.6,0.3){\footnotesize $4$}
\end{pspicture} } 
\newcommand{\CTthirtyone}{ 
\begin{pspicture}(-0.1,-0.1)(0.8,0.8)
        \psdots(0.0,0.0)
        \psdots(0.6,0.0)
        \psdots(0.6,0.6)
        \psdots(0,0.6)
        \psline(0.0,0.0)(0.6,0.0)
        \psline(0.6,0.0)(0.6,0.6)
        \psline(0.6,0.6)(0,0.6)
        \psline(0,0.6)(0.0,0.0)
	\uput{0.1}[180](0.0,0.3){\footnotesize $4$}
	\uput{0.1}[0](0.6,0.3){\footnotesize $4$}
	\uput{0.1}[270](0.3,0.0){\footnotesize $4$}
\end{pspicture} } 
\newcommand{\CTthirtytwo}{ 
\begin{pspicture}(-0.1,-0.1)(0.8,0.8)
        \psdots(0.0,0.0)
        \psdots(0.6,0.0)
        \psdots(0.6,0.6)
        \psdots(0,0.6)
        \psline(0.0,0.0)(0.6,0.0)
        \psline(0.6,0.0)(0.6,0.6)
        \psline(0.6,0.6)(0,0.6)
        \psline(0,0.6)(0.0,0.0)
	\uput{0.1}[0](0.6,0.3){\footnotesize $6$}
\end{pspicture} } 
\title[Accessing the cohomology of discrete groups above their vcd]{Accessing the cohomology of discrete groups \\ above their virtual cohomological dimension}
\author[Rahm]{Alexander D. Rahm}
\thanks{Funded by the Irish Research Council for Science, Engineering and Technology}
\address{National University of Ireland at Galway, Department of Mathematics}
\email{Alexander.Rahm@nuigalway.ie}
\urladdr{http://www.maths.nuigalway.ie/~rahm/}
\subjclass[2010]{ 11F75, Cohomology of arithmetic groups. }
\date{\today}
\begin{document}

\begin{abstract}
We introduce a method to explicitly determine the Farrell--Tate cohomology of discrete groups.
We apply this method to the Coxeter triangle and tetrahedral groups as well as to the Bianchi groups,
i.e. $\text{PSL}_2(\mathcal{O})$ for~$\mathcal{O}$ the ring of integers in an imaginary quadratic
number field, and to their finite index subgroups.
We show that the Farrell--Tate cohomology of the Bianchi groups is completely determined by the numbers
of conjugacy classes of finite subgroups.
In fact, our access to Farrell--Tate cohomology allows us to detach the information about it
 from geometric models for the Bianchi groups and to express it only with the group structure.
Formulae for the numbers of conjugacy classes of finite subgroups have been determined in
a thesis of Kr\"amer, in terms of elementary number-theoretic information on~$\mathcal{O}$.
 An evaluation of these formulae for a large number of Bianchi groups is provided numerically in the appendix.
Our new insights about their homological torsion allow us to give a conceptual description
 of the cohomology ring structure of the Bianchi groups.
\end{abstract}
\maketitle

\setcounter{secnumdepth}{3}
\setcounter{tocdepth}{1}

\section{Introduction}

Our objects of study are discrete groups~$\Gamma$ such that~$\Gamma$ admits a torsion-free subgroup of finite index. 
By a theorem of Serre, all the torsion-free subgroups of finite index in~$\Gamma$ have the same cohomological dimension;
this dimension is called the virtual cohomological dimension (abbreviated vcd) of~$\Gamma$.
Above the vcd, the (co)homology of a discrete group is determined by its system of finite subgroups.
We are going to discuss it in terms of Farrell--Tate cohomology (which we will by now just call Farrell cohomology).
The Farrell cohomology $\Farrell^q$  is  identical to group cohomology $\Homol^q$ in all degrees $q$ above the vcd, 
and extends in lower degrees to a cohomology theory of the system of finite subgroups.
Details are elaborated in~\cite{Brown}*{chapter X}.
So for instance considering the Coxeter groups, the virtual cohomological dimension of all of which vanishes, 
their Farrell cohomology is identical to all of their group cohomology. 
In Section~\ref{conjugacy reduction}, we will introduce a method of how to explicitly determine the Farrell cohomology : 
By reducing torsion sub-complexes.
This method has also been implemented on the computer \cite{HAP},
 which allows us to check the results that we obtain by our arguments.
We apply our method to the Coxeter triangle and tetrahedral groups in Section~\ref{Coxeter_groups},
 and to the Bianchi groups in Sections~\ref{The conjugacy classes of finite order elements}  through~\ref{cohomology ring}.

In detail, we require any discrete group $\Gamma$ under our study to be provided with a cell complex on which it acts cellularly. 
We call this a \emph{$\Gamma$--cell complex}.
Let $X$ be a $\Gamma$--cell complex; and let $\ell$ be a prime number.
 Denote by $X_{(\ell)}$ the set of all the cells $\sigma$ of $X$,
 such that there exists an element of order $\ell$ in the stabilizer of the cell $\sigma$.
In the case that the stabilizers are finite and fix their cells point-wise, the set $X_{(\ell)}$ is a $\Gamma$--sub-complex of $X$,
and we call it the \emph{$\ell$--torsion sub-complex}.

For the Coxeter tetrahedral groups, generated by the reflections on the sides of a tetrahedron in hyperbolic 3-space, we obtain the following. 
Denote by $\Dl$ the dihedral group of order $2\ell$.
\begin{corollary}[Corollary to Theorem~\ref{small rank Coxeter groups}.]
 Let $\Gamma$ be a Coxeter tetrahedral group, and  $\ell > 2$ be a prime number.
Then there is an isomorphism  
$\Homol_q(\Gamma; \thinspace \Z/\ell) \cong \left(\Homol_q(\Dl; \thinspace \Z/\ell)\right)^m$,
 with $m$ the number of connected components of the orbit space of the $\ell$--torsion sub-complex of the Davis complex of~$\Gamma$.
\end{corollary}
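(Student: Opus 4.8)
The plan is to identify $\Homol_q(\Gamma;\Z/\ell)$, in the range above the virtual cohomological dimension, with the $\Gamma$-equivariant homology of the $\ell$--torsion sub-complex of the Davis complex, and then to feed in the explicit reduction furnished by Theorem~\ref{small rank Coxeter groups}. Let $X$ denote the Davis complex of $\Gamma$, realised --- for instance via its barycentric subdivision --- so that cell stabilisers are finite and fix their cells pointwise; then $X$ is contractible, $\Gamma$ acts on it cocompactly, and $X_{(\ell)}$ is a $\Gamma$--cocompact sub-complex. Every cell of $X$ not lying in $X_{(\ell)}$ has stabiliser of order prime to $\ell$, so in the equivariant homology spectral sequence (\cite{Brown}*{chapter VII}) the relative term $\Homol_\ast^{\Gamma}(X,X_{(\ell)};\Z/\ell)$ collapses onto $\Homol_\ast(X/\Gamma,\,X_{(\ell)}/\Gamma;\,\Z/\ell)$, the homology of the contractible fundamental domain relative to a finite graph; the long exact sequence of the pair then gives $\Homol_q(\Gamma;\Z/\ell)\cong\Homol_q^{\Gamma}(X_{(\ell)};\Z/\ell)$ in every degree above the vcd (this being the comparison of $\Homol$ with Farrell cohomology of \cite{Brown}*{chapter X}, the intended range of the statement). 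It remains to compute $\Homol_\ast^{\Gamma}(X_{(\ell)};\Z/\ell)$.

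Because the edge stabilisers of $X$ are cyclic of order $2$ or dihedral, while its vertex stabilisers are the rank-$\le 3$ finite Coxeter groups, the sub-complex $X_{(\ell)}$ is one-dimensional: a graph whose vertices carry the rank-$\le 3$ finite Coxeter groups of order divisible by $\ell$ --- among them $\Sfour$ and $\Icos$ --- and whose edges carry dihedral groups $\mathcal{D}_n$ with $\ell\mid n$. The algebraic input I would use, to be read off from the classification of finite Coxeter groups, is that every such $G$ has Sylow $\ell$--subgroup equal to $\Z/\ell$ with $\mathrm{Aut}_G(\Z/\ell)=\Z/2$ acting by inversion; hence $\Homol_\ast(G;\Z/\ell)\cong\Homol_\ast(\Z/\ell;\Z/\ell)^{\Z/2}\cong\Homol_\ast(\Dl;\Z/\ell)$, realised by the inclusion of a suitable dihedral subgroup of order $2\ell$. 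This is exactly what makes Conditions~A and~B of the reduction of Section~\ref{conjugacy reduction} applicable along every edge of $X_{(\ell)}$.

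Granting that, Theorem~\ref{small rank Coxeter groups} --- obtained by running the reduction on each of the finitely many Coxeter tetrahedral groups, organised by the Lann\'er and quasi-Lann\'er tetrahedra --- asserts that the reduced $\ell$--torsion sub-complex is $\Gamma$--equivariantly the disjoint union $\coprod_{i=1}^{m}\Gamma/\Dl$ of vertex orbits, one for each connected component of $X_{(\ell)}/\Gamma$; the component count $m$ is preserved because each reduction step is a $\Gamma$--equivariant deformation retraction taking place inside a single component. Since the reduction leaves $\Gamma$--equivariant $\Z/\ell$--homology unchanged,
\begin{align*}
\Homol_q^{\Gamma}\big(X_{(\ell)};\,\Z/\ell\big)
   \;&\cong\; \Homol_q^{\Gamma}\!\left(\,\coprod_{i=1}^{m}\Gamma/\Dl\;;\ \Z/\ell\right)\\
   \;&=\; \bigoplus_{i=1}^{m}\Homol_q\big(\Dl;\,\Z/\ell\big)
    \;=\; \big(\Homol_q(\Dl;\,\Z/\ell)\big)^{m},
\end{align*}
and together with the comparison isomorphism of the first paragraph this is the corollary.

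The real content, and the step I expect to be the main obstacle, is hidden inside Theorem~\ref{small rank Coxeter groups}: one has to verify, group by group through the list of hyperbolic Coxeter tetrahedra, that after the A/B reductions no edge and no essential circuit of $X_{(\ell)}/\Gamma$ survives --- equivalently, that each of these quotient graphs is a forest --- so that every connected component really does collapse all the way down to a single $\Dl$--vertex, rather than to a loop (which would contribute an extra free summand) or to a configuration retaining a larger stabiliser. Everything else --- the spectral-sequence comparison, the isomorphisms $\Homol_\ast(G;\Z/\ell)\cong\Homol_\ast(\Dl;\Z/\ell)$ for the finite subgroups involved, and the counting of components --- is then routine. (The isomorphism is meant in degrees above the vcd, where $\Homol$ agrees with Farrell cohomology; in degree $0$ the two sides already differ once $m>1$.)
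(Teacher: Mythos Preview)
Your argument is correct and follows essentially the same route as the paper. The paper's proof of the corollary is the one-line observation after Theorem~\ref{small rank Coxeter groups} that ``the conditions of this theorem are for instance fulfilled by the Coxeter tetrahedral groups,'' together with the tables in Figures~\ref{3-torsionCT1-14}--\ref{5-torsionCT} which exhibit, case by case for the thirty-two groups, that each $\ell$--torsion sub-complex quotient is a disjoint union of simple paths (no loops, no bifurcations) and hence reduces to isolated $\Dl$--vertices. You have correctly identified this finite verification as the real content, and your surrounding discussion of the comparison with Farrell cohomology and of the Sylow structure of the stabilisers simply recapitulates the machinery already built into Theorem~\ref{pivotal} and Lemma~\ref{finite Coxeter subgroups}. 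One small remark: your caveat about degree~$0$ is apt, but since the paper is working throughout with Farrell cohomology (and the Davis complex is a model for $\underline{E}\Gamma$), the intended range is $q>0$, where the discrepancy you note does not arise.
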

We specify the exponent $m$ in the tables in Figures~\ref{3-torsionCT1-14} through~\ref{5-torsionCT}.

Some individual procedures of our method have already been applied as ad hoc tricks by experts since~\cite{Soule},
usually without providing a printed explanation of the tricks.
An essential advantage of establishing a systematic method rather than using a set of ad hoc tricks, 
is that we can find ways to compute directly the quotient of the reduced torsion sub-complexes, 
working outside of the geometric model and skipping the often very laborious calculation of the orbit space of the
 $\Gamma$--cell complex.
This provides access to the cohomology of many discrete groups for which the latter orbit space calculation is far out of reach.
For the Bianchi groups, we give in Section~\ref{The conjugacy classes of finite order elements}
an instance of how to construct the quotient of the reduced torsion sub-complex outside of the geometric model.

\subsection*{Results for the Bianchi groups}
Denote by $\rationals(\sqrt{-m})$, with $m$ a square-free positive integer, an imaginary quadratic number field, and by $\ringO_{-m}$ its ring of integers.
The \emph{\mbox{Bianchi} groups} are the groups $\mathrm{PSL_2}(\ringO_{-m})$.
The \mbox{Bianchi} groups may be considered as a key to the study of a larger class of groups, the \emph{Kleinian} groups, which date back to work of Henri Poincar\'e~\cite{Poincare}.
In fact, each non-co-compact arithmetic Kleinian group is commensurable with some \mbox{Bianchi} group~\cite{MaclachlanReid}.
A wealth of information on the \mbox{Bianchi} groups can be found in the monographs \cite{Fine}, \cite{ElstrodtGrunewaldMennicke}, \cite{MaclachlanReid}.
Kr\"amer \cite{Kraemer} has determined number-theoretic formulae for the numbers of conjugacy classes of finite subgroups in the Bianchi groups, using numbers of ideal classes in
orders of cyclotomic extensions of $\rationals(\sqrt{-m})$. 

In Section~\ref{The Kraemer numbers and group homology},
 we express the homological torsion of the Bianchi groups as a function of these numbers of conjugacy classes.
To achieve this, we build on the geometric techniques of \cite{Rahm_homological_torsion},
which depend on the explicit knowledge of the quotient space of geometric models for the Bianchi groups ---
like any technique effectively accessing the (co)homology of the Bianchi groups, either directly \cite{SchwermerVogtmann},
 \cite{Vogtmann} or via a group presentation \cite{BerkoveMod2}.
For the Bianchi groups, we can in Sections~\ref{The conjugacy classes of finite order elements}
 and~\ref{The Kraemer numbers and group homology} detach invariants of the group actions from the geometric models,
 in order to express them only by the group structure itself, in terms of conjugacy classes of finite subgroups,
 normalizers of the latter, and their interactions.
This information is already contained in our reduced torsion sub-complexes.

Not only does this provide us  with exact formulae for the homological torsion of the Bianchi groups, 
the power of which we can see in the numerical evaluations of Appendices~\ref{Numerical evaluation of Kraemer's formulae in 3-torsion}
 and~\ref{Numerical evaluation of Kraemer's formulae in 2-torsion},
also it allows us to understand the r\^ole of the centralizers of the finite subgroups ---
and this is how in~\cite{orbifold_cohomology}, some more fruits of the present results are harvested
(in terms of the Chen/Ruan orbifold cohomology of the orbifolds given by the action of the Bianchi groups on complexified hyperbolic space).

Except for the Gauss{}ian and Eisenstein integers, which can easily be treated separately \cite{SchwermerVogtmann}, \cite{Rahm_homological_torsion}, all the rings of integers of imaginary quadratic number fields admit as only units $\{\pm 1\}$. In the latter case, we call $\PSL_2(\ringO)$ a \textit{Bianchi group with units} $\{\pm 1\}$.
For the possible types of finite subgroups in the Bianchi groups, see Lemma~\ref{finiteSubgroups} : 
There are five non-trivial possibilities.
In Theorem~\ref{Grunewald-Poincare series formulae}, the proof of which we give in
 Section~\ref{The Kraemer numbers and group homology}, we give a formula expressing precisely how the Farrell cohomology
 of the Bianchi groups with units $\{\pm 1\}$
 depends on the numbers of conjugacy classes of non-trivial finite subgroups of the occurring five types.
The main step in order to prove this, is to read off the Farrell cohomology from the quotient of the reduced torsion sub-complexes.

Kr\"amer's formulae express the numbers of conjugacy classes of the five types of non-trivial finite subgroups in the Bianchi groups, where the symbols in the first row are Kr\"amer's notations for the number of their conjugacy classes:
$$\begin{array}{|c|c|c|c|c|}
\hline \lambda_{4} &  \lambda_6  &  \mu_2      & \mu_3   & \mu_T   \\
\hline \Z/2         & \Z/3 &   \Kleinfourgroup & \Sthree & \Afour   \\
\hline
  \end{array}$$
We are going to use these symbols also for the numbers of conjugacy classes in $\Gamma$,
 where $\Gamma$ is a finite index subgroup in a Bianchi group.
Recall that for $\ell = 2$ and $\ell = 3$, 
we can express the the dimensions of the homology of $\Gamma$ with coefficients in the field ${\mathbb{F}_\ell}$ with $\ell$ elements
 in degrees above the virtual cohomological dimension of the Bianchi groups -- which is $2$ -- by the Poincar\'e series
$$P^\ell_\Gamma(t) := \sum\limits_{q \thinspace > \thinspace 2}^{\infty} \dim_{\mathbb{F}_\ell} \Homol_q \left(\Gamma;\thinspace {\mathbb{F}_\ell} \right)\thinspace t^q,$$
which has been suggested by Grunewald.
Further let $P_{\circlegraph} (t) := \frac{-2t^3}{t-1}$ , which equals the series $P^2_\Gamma(t)$  of the groups $\Gamma$ the quotient of the reduced $2$--torsion sub-complex of which is a circle.
Denote by \begin{itemize}
\item $P_{\Kleinfourgroup}^*(t) := \frac{-t^3(3t -5)}{2(t-1)^2}$, the Poincar\'e series over $\dim_{\mathbb{F}_2} \Homol_q \left(\Kleinfourgroup;\thinspace {\mathbb{F}_2} \right) -\frac{3}{2}\dim_{\mathbb{F}_2} \Homol_q \left(\Z/2;\thinspace {\mathbb{F}_2} \right)$ 
\item and by $P_{\Afour}^*(t) := \frac{-t^3(t^3 - 2t^2 + 2t - 3)}{2(t-1)^2 (t^2 + t + 1 ) }$, the Poincar\'e series over 
$$\dim_{\mathbb{F}_2} \Homol_q \left(\Afour;\thinspace {\mathbb{F}_2} \right) -\frac{1}{2}\dim_{\mathbb{F}_2} \Homol_q \left(\Z/2;\thinspace {\mathbb{F}_2} \right).$$
\end{itemize}

In 3-torsion, let
$P_{\edgegraph} (t) := \frac{-t^3(t^2 - t + 2)}{(t-1)(t^2+1)}$, which equals the series $P^3_\Gamma(t)$ for the Bianchi groups the quotient of the reduced $3$--torsion sub-complex of which is a single edge without identifications.

\begin{theorem} \label{Grunewald-Poincare series formulae}
For any finite index subgroup $\Gamma$ in a Bianchi group with units $\{\pm 1\}$, the group homology in degrees above its virtual cohomological dimension is given by the Poincar\'e series
$$P^2_\Gamma(t) = \left(\lambda_4 -\frac{3\mu_2 -2\mu_T}{2}\right)P_{\circlegraph} (t) +(\mu_2 -\mu_T)P_{\Kleinfourgroup}^*(t) +\mu_T P_{\Afour}^*(t)$$
and
$$P^3_\Gamma(t) =  \left(\lambda_6 -\frac{\mu_3}{2}\right)P_{\circlegraph} (t) + \frac{\mu_3}{2}P_{\edgegraph}(t).$$
\end{theorem}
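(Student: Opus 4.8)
The plan is to combine Brown's identification of cohomology above the virtual cohomological dimension with the torsion--subcomplex reduction of Section~\ref{conjugacy reduction}, which turns the problem into a finite computation over a graph of finite groups, and then to convert the combinatorics of that graph into Kr\"amer's conjugacy--class counts. Concretely: the virtual cohomological dimension of a Bianchi group, and hence of its finite index subgroup $\Gamma$, is $2$, so by \cite{Brown}*{chapter~X} one has $\Homol_q(\Gamma;\mathbb{F}_\ell)\cong\Farrell_q(\Gamma;\mathbb{F}_\ell)$ for every $q>2$; and by Lemma~\ref{finiteSubgroups} a Bianchi group with units $\{\pm1\}$ has $\ell$--torsion only for $\ell\in\{2,3\}$, so only these two primes need be treated. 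For $\ell=2,3$ the reduction of Section~\ref{conjugacy reduction} (Conditions A and B) replaces the $\ell$--torsion subcomplex by a reduced one which still computes $\Farrell_*(\Gamma;\mathbb{F}_\ell)$ through the $\Gamma$--equivariant spectral sequence $E_1^{p,q}=\bigoplus_{\sigma}\Homol^q(\Gamma_\sigma;\mathbb{F}_\ell)$, and whose orbit space is a \emph{finite graph} $\mathcal{G}_\ell$: each $1$--cell has stabilizer $\Z/\ell$ and each $0$--cell a finite stabilizer of order divisible by $\ell$.

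\emph{Detaching $\mathcal{G}_\ell$ from the geometry.} Using Section~\ref{The conjugacy classes of finite order elements} I would reconstruct $\mathcal{G}_\ell$ --- its vertices with their stabilizers, its edges, and the incidences --- purely from the conjugacy classes of the finite subgroups $\Z/2,\Z/3,\Kleinfourgroup,\Sthree,\Afour$ of $\Gamma$, their normalizers, and the inclusions among them; this is where \cite{Rahm_homological_torsion} and Section~\ref{The Kraemer numbers and group homology} enter. Two features of $\mathcal{G}_\ell$ drive the final answer. First, the reduction already eliminates every vertex whose stabilizer has $\ell$--primary Farrell cohomology isomorphic to that of $\Z/\ell$ --- in particular, for $\ell=3$, every $\Afour$, since a Sylow $3$--subgroup of $\Afour$ is self--normalising; this is why $\mu_T$ does not appear in $P^3_\Gamma(t)$. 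Second, at each surviving vertex $v$ the term $\Homol^*(\Gamma_v;\mathbb{F}_\ell)$ is the $W_v$--invariant part of $\Homol^*(\Z/\ell;\mathbb{F}_\ell)$, with $W_v=N_{\Gamma_v}(\Z/\ell)/C_{\Gamma_v}(\Z/\ell)$ the Weyl group; for $\ell=3$ this is $\Z/2$ acting by inversion at every surviving vertex, so each such vertex behaves $3$--locally like $\Sthree$, while for $\ell=2$ the surviving vertices fall into those behaving like $\Kleinfourgroup$ and those behaving like $\Afour$, with $\Z/2$ along every edge.

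\emph{Mayer--Vietoris, assembly, and counting.} Since $\mathcal{G}_\ell$ is one--dimensional the spectral sequence degenerates: $\Farrell^q(\Gamma;\mathbb{F}_\ell)$ is an extension of $\ker d_1^{\,q}$ by $\operatorname{coker}d_1^{\,q-1}$, where $d_1\colon\bigoplus_v\Homol^*(\Gamma_v;\mathbb{F}_\ell)\to\bigoplus_e\Homol^*(\Z/\ell;\mathbb{F}_\ell)$ is the difference of restrictions along $\mathcal{G}_\ell$. Plugging in the standard mod--$\ell$ (co)homology of $\Z/2,\Kleinfourgroup,\Afour$ (for $\ell=2$) and of $\Z/3,\Sthree$ (for $\ell=3$), one computes the Poincar\'e series of each connected component of $\mathcal{G}_\ell$ and checks it is an integral combination of $P_{\circlegraph}(t)$ and $P_{\edgegraph}(t)$ (respectively of $P_{\circlegraph}(t)$, $P_{\Kleinfourgroup}^*(t)$ and $P_{\Afour}^*(t)$): a circle of $\Z/\ell$'s contributes $P_{\circlegraph}(t)$, a single $\Sthree$--edge contributes $P_{\edgegraph}(t)$, and a $\Kleinfourgroup$-- or $\Afour$--vertex contributes its defect series on top of the $\Z/2$--circle through it. It then remains to count: from the reconstruction of $\mathcal{G}_\ell$ one expresses the number of ``effective circles'' and of each vertex type in terms of $\lambda_4,\mu_2,\mu_T$ (for $\ell=2$) and $\lambda_6,\mu_3$ (for $\ell=3$) --- each class of $\Kleinfourgroup$ or $\Afour$ absorbing a prescribed amount of the $\lambda_4$ edge--classes incident to it, each class of $\Sthree$ supplying one endpoint of a $3$--torsion edge, whence the fractions $\tfrac{3\mu_2-2\mu_T}{2}$ and $\tfrac{\mu_3}{2}$ --- and sums these local contributions over all components to obtain the two displayed formulae.

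The main obstacle is precisely this counting step together with the reconstruction of $\mathcal{G}_\ell$: one must control exactly how the normalizers of the five types of finite subgroups of the Bianchi group, and their intersections with the finite index subgroup $\Gamma$, glue the reduced torsion subcomplex and how the incident $\Z/\ell$--edge classes distribute among the vertices --- the genuinely arithmetic--geometric input, resting on \cite{Rahm_homological_torsion} and on Kr\"amer's analysis \cite{Kraemer}. Once $\mathcal{G}_\ell$ is pinned down, the reduction to it and the Mayer--Vietoris assembly are formal.
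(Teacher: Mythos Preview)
Your outline is the paper's own argument: reduce to the $\ell$--torsion sub-complex via Theorem~\ref{pivotal}, identify its quotient with the $\ell$--conjugacy classes graph (Theorem~\ref{isomorphy of graphs}), split the equivariant spectral sequence over connected components (Remark~\ref{directSumDecomposition}), and convert the component count into Kr\"amer's numbers through Corollary~\ref{graph-theoretic formulae}. Two points, however, need repair.

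First, your assertion that at each surviving vertex $v$ one has $\Homol^*(\Gamma_v;\F_\ell)\cong\Homol^*(\Z/\ell;\F_\ell)^{W_v}$ is false for $\ell=2$: the surviving stabilizers $\Kleinfourgroup$ and $\Afour$ have Sylow $2$--subgroup $\Kleinfourgroup$, not $\Z/2$, so their mod~$2$ cohomology is (a subring of) $\F_2[x,y]$, not of $\F_2[x]$. You nonetheless land on the correct list of surviving vertex types, so the slip is local; but the justification as written is wrong.

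Second --- and this is the substantive point --- the step you label ``Mayer--Vietoris assembly'' hides the one genuinely nontrivial calculation at $\ell=2$, namely Lemma~\ref{injectivity} (via Lemma~\ref{D2blocks}): the differential $d^1_{1,q}$ is \emph{injective} on every component not of type $\circlegraph$, in all rows $q>1$. This is exactly what makes the Poincar\'e series additive over vertices irrespective of the component's shape: with injectivity, a component with $a$ vertices of type $\Afour$ and $k$ of type $\Kleinfourgroup$ has $\tfrac{a+3k}{2}$ edges (Lemma~\ref{geometricRigiditytheorem}) and contributes precisely $a\,P_{\Afour}^*(t)+k\,P_{\Kleinfourgroup}^*(t)$. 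Without this lemma one could not rule out that different component shapes with the same vertex counts give different contributions --- indeed Section~\ref{cohomology ring} shows they \emph{do} differ multiplicatively. Your phrase ``one computes'' should not absorb this; it is the technical heart of the $2$--torsion half of the proof.
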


Our method is further applied in~\cite{BerkoveRahm} to obtain also the Farrell cohomology of $\SLO$.

\subsection*{Organization of the paper}
In Section~\ref{conjugacy reduction}, we introduce our method to explicitly determine Farrell cohomology: 
By reducing the torsion sub-complexes.
We apply our method to the Coxeter triangle and tetrahedral groups in Section~\ref{Coxeter_groups}.
In Section~\ref{The conjugacy classes of finite order elements},
 we show how to read off the Farrell cohomology of the Bianchi groups from the reduced torsion sub-complexes.
 We achieve this by showing that for the Bianchi groups,
 the quotients of the reduced torsion sub-complexes are homeomorphic to conjugacy classes graphs
 that we can define without reference to any geometric model.
This enables us in Section~\ref{The Kraemer numbers and group homology} to prove the formulae for the homological torsion
 of the Bianchi groups in terms of numbers of conjugacy classes of finite subgroups.
We use this to establish the structure of the classical cohomology rings of the Bianchi groups in Section~\ref{cohomology ring}.
Kr\"amer has given number-theoretic formulae for these numbers of conjugacy classes,
 and we evaluate them numerically in Appendices~\ref{Numerical evaluation of Kraemer's formulae in 3-torsion}
 and~\ref{Numerical evaluation of Kraemer's formulae in 2-torsion}.
Finally, we present some numerical asymptotics on the numbers of conjugacy classes
 in Appendix~\ref{Asymptotic behaviour of the number of conjugacy classes}.

\subsection*{Acknowledgements} The author is indebted to the late great mathematician Fritz Grunewald,
 for telling him about the existence and providing him a copy of Kr\"amer's Diplom thesis.
 Warmest thanks go to Rub{\'e}n S{\'a}nchez-Garc{\'{\i}}a for providing his implementation of the Davis complex,
 to Mike Davis and G\"otz Pfeiffer for discussions on the Coxeter groups,
 to Oliver Braunling for a correspondence on the occurrence of given norms on rings of integers,
 to Nicolas Bergeron for discussions on asymptotics,
 to Philippe Elbaz-Vincent and Matthias Wendt for a very careful lecture of the manuscript and helpful suggestions,
 and to Graham Ellis and \mbox{Stephen S. Gelbart} for support and encouragement.

\section{Reduction of torsion sub-complexes} \label{conjugacy reduction}
Let $X$ be a finite-dimensional cell complex with a cellular action of a discrete group~$\Gamma$,
 such that each cell stabilizer fixes its cell point-wise.
Let $\ell$ be a prime such that every non-trivial finite $\ell$--subgroup of~$\Gamma$ admits a contractible fixed point set.
 \textit{We keep these requirements on the $\Gamma$--action as a general assumption throughout this article.}
Then, the $\Gamma$--equivariant Farrell cohomology of~$X$, for any trivial $\Gamma$--module $M$ of coefficients, gives us the 
$\ell$--primary part $\Farrell^*(\Gamma; \thinspace M)_{(\ell)}$ of the Farrell cohomology of~$\Gamma$, as follows.
\begin{proposition}[Brown \cite{Brown}] \label{Brown's proposition}
Under our general assumption, the canonical map
$$ \Farrell^*(\Gamma; \thinspace M)_{(\ell)} \to \Farrell^*_\Gamma(X; \thinspace M)_{(\ell)} $$
is an isomorphism.
\end{proposition}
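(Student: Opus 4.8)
The plan is to compute both sides by means of the equivariant spectral sequence of the $\Gamma$--CW complex $X$ and to compare it, after localisation at~$\ell$, with the analogous spectral sequence attached to a single point. Since $\Gamma$ has finite virtual cohomological dimension it admits a complete resolution $F_*$, so by definition $\Farrell^*_\Gamma(X;M)$ is the cohomology of the total complex of $\operatorname{Hom}_{\Z\Gamma}(F_*\otimes_\Z C_*(X),M)$. Filtering by the cellular degree of~$X$ yields a spectral sequence
$$E_1^{p,q}\;=\;\prod_{\sigma\in\Sigma_p}\Farrell^q(\Gamma_\sigma;M)\;\Longrightarrow\;\Farrell^{p+q}_\Gamma(X;M),$$
which converges because $X$ is finite-dimensional; here $\Sigma_p$ is a set of representatives of the $\Gamma$--orbits of $p$--cells, and one uses that each stabiliser $\Gamma_\sigma$ is finite, that it fixes its cell point-wise so that no orientation twist of~$M$ occurs, and that the restriction to~$\Gamma_\sigma$ of a complete resolution for~$\Gamma$ is again a complete resolution, so that by the induction--restriction adjunction $\operatorname{Hom}_{\Z\Gamma}(F_*\otimes\Z[\Gamma/\Gamma_\sigma],M)\cong\operatorname{Hom}_{\Z\Gamma_\sigma}(F_*,M)$ computes $\Farrell^*(\Gamma_\sigma;M)$. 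For the one-point $\Gamma$--CW complex $\mathrm{pt}$ this spectral sequence collapses onto the single column $\Farrell^*(\Gamma;M)$, re-proving $\Farrell^*_\Gamma(\mathrm{pt};M)=\Farrell^*(\Gamma;M)$, and the unique $\Gamma$--map $X\to\mathrm{pt}$ induces a morphism of spectral sequences whose abutment is the canonical map of the statement.

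I would then localise at~$\ell$, which is exact. As $\Farrell^q(\Gamma_\sigma;M)$ is annihilated by $|\Gamma_\sigma|$, its localisation vanishes unless $\ell\mid|\Gamma_\sigma|$, i.e.\ unless $\sigma$ lies in the $\ell$--torsion sub-complex $X_{(\ell)}$; hence the localised $E_1$--page of~$X$ coincides with that of~$X_{(\ell)}$, and the inclusion $X_{(\ell)}\hookrightarrow X$ induces an isomorphism $\Farrell^*_\Gamma(X_{(\ell)};M)_{(\ell)}\cong\Farrell^*_\Gamma(X;M)_{(\ell)}$. If $\Gamma$ has no $\ell$--torsion then $X_{(\ell)}=\emptyset$ and also $\Farrell^*(\Gamma;M)_{(\ell)}=0$ (detection of the $\ell$--primary part on finite $\ell$--subgroups), so both sides vanish; thus we may assume $\Gamma$ has $\ell$--torsion, whence $\mathrm{pt}_{(\ell)}=\mathrm{pt}$, and it remains to prove that the $\Gamma$--map $X_{(\ell)}\to\mathrm{pt}$ induces an isomorphism $\Farrell^*(\Gamma;M)_{(\ell)}=\Farrell^*_\Gamma(\mathrm{pt};M)_{(\ell)}\to\Farrell^*_\Gamma(X_{(\ell)};M)_{(\ell)}$.

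This last step is the heart of the argument, and it is where the hypothesis enters. For every non-trivial finite $\ell$--subgroup $P\leq\Gamma$ one has $X^P\subseteq X_{(\ell)}$, hence $(X_{(\ell)})^P=X^P$, which by assumption is contractible, whereas $\mathrm{pt}^P=\mathrm{pt}$; thus $X_{(\ell)}\to\mathrm{pt}$ is a $\Gamma$--map that induces a homotopy equivalence on the fixed-point set of every non-trivial finite $\ell$--subgroup. To turn this into the asserted isomorphism one invokes that, $\ell$--locally, equivariant Farrell cohomology registers only the fixed-point data of finite $\ell$--subgroups: for a finite group $H$ with $\ell\mid|H|$, restriction to a Sylow $\ell$--subgroup $P$ exhibits $\Farrell^*(H;M)_{(\ell)}$ as a direct summand of $\Farrell^*(P;M)$, so that the coefficient systems $\sigma\mapsto\Farrell^*(\Gamma_\sigma;M)_{(\ell)}$ are detected on $\ell$--subgroups. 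A $\Gamma$--equivariant Whitehead-type comparison --- for instance by inserting a model~$E$ for the classifying space of the family of finite $\ell$--subgroups of~$\Gamma$ and the chain $X_{(\ell)}\leftarrow X_{(\ell)}\times E\to E\to\mathrm{pt}$, in which each map is, cell-by-cell over the orbits, reduced to the corresponding statement for finite groups --- then yields that every arrow induces an isomorphism on $\ell$--localised equivariant Farrell cohomology; chasing through the maps identifies the composite with the canonical one. I expect the genuine obstacle to lie precisely here: converting the subgroup-by-subgroup contractibility of the sets $X^P$ into a statement about $X_{(\ell)}$, hence about~$X$, as a whole. This rests on the detection of $\ell$--torsion in the Tate cohomology of finite groups on their Sylow $\ell$--subgroups, together with a careful handling of the equivariant spectral sequences involved, which is the technical substance of Brown's treatment in~\cite{Brown}*{Chapter~X} and which we take over here.
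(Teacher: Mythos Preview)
The paper does not supply its own proof of this proposition: it is simply quoted from Brown's book~\cite{Brown}*{Chapter~X} and used as a black box. Your sketch is a faithful outline of Brown's argument there --- the equivariant spectral sequence with $E_1$--terms the Farrell cohomology of the finite stabilisers, the vanishing after $\ell$--localisation of all terms coming from cells outside $X_{(\ell)}$, and the final comparison of $X_{(\ell)}$ with a point via the contractibility of the fixed-point sets $X^P$ for non-trivial finite $\ell$--subgroups~$P$.

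Your last paragraph is honest about where the weight lies: passing from ``$X^P$ is contractible for every such~$P$'' to ``$X_{(\ell)}\to\mathrm{pt}$ induces an isomorphism on $\ell$--local equivariant Farrell cohomology'' is exactly Brown's work in~\cite{Brown}*{X.7}, and you are right to flag it rather than pretend it is obvious. One small point worth tightening: the chain $X_{(\ell)}\leftarrow X_{(\ell)}\times E\to E\to\mathrm{pt}$ you propose is a reasonable device, but you should make sure you know how to justify the leftmost arrow (projection $X_{(\ell)}\times E\to X_{(\ell)}$) as an $\ell$--local isomorphism; this uses that $E$ has contractible $P$--fixed points for finite $\ell$--subgroups~$P$, together with the same detection-on-Sylow-$\ell$ principle you invoke. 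With that caveat, your proposal is correct and matches the source the paper cites.
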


The classical choice \cite{Brown} is to take for $X$
 the geometric realization of the partially ordered set of non-trivial finite subgroups 
(respectively, non-trivial elementary Abelian $\ell$--subgroups) of~$\Gamma$,
 the latter acting by conjugation. The stabilizers are then the normalizers, which in many discrete groups are infinite.
And it can impose great computational challenges to determine a group presentation for them.
When we want to compute the module $\Farrell^*_\Gamma(X; \thinspace M)_{(\ell)}$
subject to Proposition~\ref{Brown's proposition},
at least we must get to know the ($\ell$--primary part of the) Farrell cohomology of these normalizers.
The Bianchi groups are an instance that different isomorphism types can occur for this cohomology
 at different conjugacy classes of elementary Abelian $\ell$--subgroups, both for $\ell=2$ and $\ell=3$.
As the only non-trivial elementary Abelian $3$--subgroups in the Bianchi groups are of rank $1$,
the orbit space $_\Gamma \backslash X$ consists only of one point for each conjugacy class of type $\Z/3$
and a corollary~\cite{Brown} from Proposition~\ref{Brown's proposition} decomposes the
 $3$--primary part of the Farrell cohomology of the Bianchi groups into the direct product over their normalizers.
However, due to the different possible homological types of the normalizers (in fact, two of them occur),
 the final result remains unclear and subject to tedious case-by-case computations of the normalizers.

In contrast, in the cell complex we are going to develop,
 the connected components of the orbit space are for the $3$--torsion in the Bianchi groups not simple points,
 but have either the shape $\edgegraph$ or $\circlegraph$.
This dichotomy already contains the information about the occurring normalizer. 

\begin{df}
Let $\ell$ be a prime number. The \emph{$\ell$--torsion sub-complex} of the $\Gamma$--cell complex $X$ consists of all the cells of $X$
 the stabilizers in~$\Gamma$  of which contain elements of order $\ell$.
\end{df}

We are from now on going to require the cell complex $X$ to admit only finite stabilizers in~$\Gamma$, and we require the action of $\Gamma$ on the coefficient module $M$ to be trivial.
Then obviously only cells from the \emph{$\ell$--torsion sub-complex} contribute to $\Farrell^*_\Gamma(X; \thinspace M)_{(\ell)}$.
We are going to reduce the \emph{$\ell$--torsion sub-complex} to one which still carries the 
$\Gamma$--equivariant Farrell cohomology of~$X$,
but can have considerably less orbits of cells, can be easier to handle in practice,
and, for certain classes of groups, leads us to an explicit structural description of the Farrell cohomology of~$\Gamma$.
The pivotal property of this reduced $\ell$--torsion sub-complex will be given in Theorem~\ref{pivotal}.
 
\begin{ConditionA} \label{cell condition}
In the $\ell$--torsion sub-complex, let $\sigma$ be a cell of dimension $n-1$, lying in the boundary of precisely two $n$--cells $\tau_1$ and~$\tau_2$,
 the latter cells representing two different orbits.
Assume further that no higher-dimensional cells of the $\ell$--torsion sub-complex touch $\sigma$;
and that the $n$--cell stabilizers admit an isomorphism
$\Gamma_{\tau_1} \cong \Gamma_{\tau_2}$. 
\end{ConditionA}

Where this condition is fulfilled in the $\ell$--torsion sub-complex,
 we merge the cells $\tau_1$ and $\tau_2$ along~$\sigma$ and do so for their entire orbits,
 if and only if they meet the following additional condition. 
We never merge two cells the interior of which contains two points on the same orbit.
Let $\ell$ be a prime number, and denote by \emph{mod $\ell$ homology}
 group homology with $\Z/\ell$--coefficients under the trivial action.

\begin{isomorphismCondition} 
The inclusion $ \Gamma_{\tau_1} \subset \Gamma_\sigma$ induces an isomorphism on mod $\ell$ homology.
\end{isomorphismCondition}

\begin{lemma} \label{A}
 Let $\widetilde{X_{(\ell)}}$ be the $\Gamma$--complex obtained by orbit-wise merging two $n$--cells of the
 $\ell$--torsion sub-complex $X_{(\ell)}$
which satisfy Conditions~$\cellCondition$ and~$\IsomorphismConditionSymbol$.
Then, $$\Farrell^*_\Gamma(\widetilde{X_{(\ell)}}; \thinspace M)_{(\ell)} \cong \Farrell^*_\Gamma(X_{(\ell)}; \thinspace M)_{(\ell)}.$$
\end{lemma}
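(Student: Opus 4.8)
The plan is to compare the two equivariant Farrell cohomologies via Proposition~\ref{Brown's proposition} and an equivariant Mayer--Vietoris / long exact sequence argument, localizing the whole discussion at the prime $\ell$. First I would set up the situation carefully: write $X_{(\ell)}$ for the $\ell$--torsion sub-complex and $\widetilde{X_{(\ell)}}$ for the complex obtained by the orbit-wise merge along $\sigma$. The key structural observation is that, because no higher-dimensional cells of $X_{(\ell)}$ touch $\sigma$ (Condition~$\cellCondition$) and we never merge two cells whose interiors meet on a common orbit, the merge is a genuine $\Gamma$--equivariant operation: we may form the $\Gamma$--pushout in which the two $n$--cells $\tau_1$, $\tau_2$ (and their full orbits) are glued along the orbit of $\sigma$, and the result is exactly $\widetilde{X_{(\ell)}}$. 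Thus there is a $\Gamma$--equivariant quotient map $q\colon X_{(\ell)} \to \widetilde{X_{(\ell)}}$ collapsing each merged pair $\tau_1 \cup_\sigma \tau_2$ to a single $n$--cell, and off the orbit of $\sigma \cup \tau_1 \cup \tau_2$ it is the identity.

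Next I would run the equivariant cohomology long exact sequence of the pair, or equivalently a Mayer--Vietoris argument, adapted to Farrell cohomology. Concretely: let $A \subset X_{(\ell)}$ be the $\Gamma$--invariant open neighbourhood of the orbit $\Gamma\cdot\sigma$ (it deformation retracts $\Gamma$--equivariantly onto $\Gamma\cdot\sigma$), let $B_1$, $B_2$ be $\Gamma$--invariant neighbourhoods of $\Gamma\cdot\tau_1$ and $\Gamma\cdot\tau_2$ respectively. Because $\Gamma_{\tau_1} \subset \Gamma_\sigma$, the inclusion $\Gamma\cdot\tau_1 \hookrightarrow A$ is, up to $\Gamma$--homotopy, induced on stabilizers by this subgroup inclusion; Condition~$\IsomorphismConditionSymbol$ says precisely that this inclusion is an iso on mod~$\ell$ homology, hence (by the universal coefficient theorem together with the structure of Farrell cohomology as in \cite{Brown}*{chapter X}) induces an isomorphism on the $\ell$--primary part of Farrell cohomology of the stabilizers. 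The same then holds for $\Gamma_{\tau_2}$ via the isomorphism $\Gamma_{\tau_1}\cong\Gamma_{\tau_2}$ of Condition~$\cellCondition$ composed appropriately, so that the restriction maps $\Farrell^*_\Gamma(A;M)_{(\ell)} \to \Farrell^*_\Gamma(\Gamma\cdot\tau_i; M)_{(\ell)}$ are isomorphisms for $i=1,2$.

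Given this, both $X_{(\ell)}$ and $\widetilde{X_{(\ell)}}$ are built from the common part $Y := X_{(\ell)} \setminus (\Gamma\cdot(\mathring\tau_1 \cup \mathring\tau_2))$ by attaching cells along maps that become Farrell-cohomology equivalences after restriction; comparing the two Mayer--Vietoris sequences
$$
\cdots \to \Farrell^*_\Gamma(X_{(\ell)};M)_{(\ell)} \to \Farrell^*_\Gamma(Y;M)_{(\ell)} \oplus \Farrell^*_\Gamma(\overline{B_1\cup B_2};M)_{(\ell)} \to \Farrell^*_\Gamma(A;M)_{(\ell)} \to \cdots
$$
with the analogous one for $\widetilde{X_{(\ell)}}$ (where the two $n$--cells have been merged into one $n$--cell $\tilde\tau$ with $\Gamma_{\tilde\tau} \cong \Gamma_{\tau_1}$), the five lemma applied degree by degree yields the claimed isomorphism $\Farrell^*_\Gamma(\widetilde{X_{(\ell)}};M)_{(\ell)} \cong \Farrell^*_\Gamma(X_{(\ell)};M)_{(\ell)}$. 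I expect the main obstacle to be the careful bookkeeping in the last step: one must check that the gluing data on the $Y$--side genuinely agree for the two complexes (so that the comparison of Mayer--Vietoris sequences has the vertical maps it needs), and in particular that collapsing $\tau_1\cup_\sigma\tau_2$ does not alter the $\Gamma$--homotopy type of the complement or introduce new incidences among orbits — this is where the hypotheses ``no higher-dimensional cells touch $\sigma$'' and ``never merge cells with interior points on the same orbit'' are doing the real work, and it is worth isolating a small lemma to the effect that $q$ is a $\Gamma$--equivariant homotopy equivalence onto its image away from $\Gamma\cdot\sigma$. Once that is in place, everything reduces to the two Condition~$\IsomorphismConditionSymbol$ isomorphisms and the five lemma.
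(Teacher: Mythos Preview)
Your approach is correct in spirit but takes a different route from the paper. The paper does not set up a Mayer--Vietoris comparison with the five lemma; instead it works directly with the equivariant spectral sequence in Farrell cohomology (the one with $E_1^{p,q} = \bigoplus_{\sigma} \Farrell^q(\Gamma_\sigma; M)$ indexed over orbits of $p$--cells). The observation is that the piece of the $d_1$--differential emanating from the column of $\sigma$ is the diagonal map
\[
\Farrell^*(\Gamma_\sigma; M)_{(\ell)} \longrightarrow \Farrell^*(\Gamma_{\tau_1}; M)_{(\ell)} \oplus \Farrell^*(\Gamma_{\tau_2}; M)_{(\ell)}
\]
whose two blocks are the maps induced by the inclusions $\Gamma_{\tau_i} \hookrightarrow \Gamma_\sigma$; Condition~$\IsomorphismConditionSymbol$ makes each block an isomorphism, so this $d_1$ is injective and its cokernel is exactly a single copy of $\Farrell^*(\Gamma_{\tau_1}; M)_{(\ell)}$, matching the $E_1$--column one obtains for the merged cell $\tau_1 \cup_\sigma \tau_2$ in $\widetilde{X_{(\ell)}}$. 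Condition~$\cellCondition$ (no higher-dimensional cells touch $\sigma$) guarantees that no later differentials interact with this column, so the two spectral sequences agree from $E_2$ onward and hence so do their abutments.

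What this buys relative to your argument: the spectral-sequence proof is essentially a two-line computation at the $E_1$--page and avoids entirely the delicate bookkeeping you flag at the end (existence of the vertical comparison maps, the behaviour of the complement under the collapse $q$, and checking that equivariant Farrell cohomology genuinely satisfies the Mayer--Vietoris exact sequence you write down). Your route is sound, but you are effectively re-deriving a small piece of the equivariant spectral sequence by hand; once you recognize that, it is cleaner to invoke the spectral sequence directly and read off the effect of the merge on its $E_1$--page.
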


\begin{proof}[Proof of Lemma \emph{\ref{A}}]
 Consider the equivariant spectral sequence in Farrell cohomology \cite{Brown}.
On the $\ell$--torsion sub-complex, it includes a map
$$\xymatrix{ \Farrell^*(\Gamma_\sigma; \thinspace M)_{(\ell)}  
\ar[rrr]^{d_1^{(n-1),*}|_{\Farrell^*(\Gamma_\sigma; \thinspace M)_{(\ell)}} \qquad \qquad} &&&
 \Farrell^*(\Gamma_{\tau_1}; \thinspace M)_{(\ell)} \oplus \Farrell^*(\Gamma_{\tau_2}; \thinspace M)_{(\ell)} },$$
which is the diagonal map with blocks the isomorphisms
$\xymatrix{ \Farrell^*(\Gamma_\sigma; \thinspace M)_{(\ell)} \ar[r]^{\cong} & 
\Farrell^*(\Gamma_{\tau_i}; \thinspace M)_{(\ell)} },$
induced by the inclusions $\Gamma_{\tau_i} \hookrightarrow \Gamma_\sigma$.
The latter inclusions are required to induce isomorphisms in Condition~$\IsomorphismConditionSymbol$. 
If for the orbit of $\tau_1$ or $\tau_2$ we have chosen a representative which is not adjacent to $\sigma$,
then this isomorphism is composed with the isomorphism induced by conjugation with the element of~$\Gamma$
carrying the cell to one adjacent to $\sigma$. 
Hence, the map $d_1^{(n-1),*}|_{\Farrell^*(\Gamma_\sigma; \thinspace M)_{(\ell)}}$ has vanishing kernel,
 and dividing its image out of $ \Farrell^*(\Gamma_{\tau_1}; \thinspace M)_{(\ell)} \oplus \Farrell^*(\Gamma_{\tau_2}; \thinspace M)_{(\ell)} $
gives us the $\ell$--primary part $ \Farrell^*(\Gamma_{\tau_1 \cup \tau_2}; \thinspace M)_{(\ell)}$ 
of the Farrell cohomology of the union $\tau_1 \cup \tau_2$ of the two $n$--cells,
once that we make use of the isomorphism $\Gamma_{\tau_1} \cong \Gamma_{\tau_2}$ of Condition~$\cellCondition$.
As by Condition~$\cellCondition$ no higher-dimensional cells are touching $\sigma$, 
there are no higher degree differentials interfering.
\end{proof}

By a ``terminal vertex'',
 we will denote a vertex with no adjacent higher-dimensional cells and precisely one adjacent edge in the quotient space,
 and by ``cutting off'' the latter edge,
 we will mean that we remove the edge together with the terminal vertex from our cell complex.

\begin{df}
 The \emph{reduced $\ell$--torsion sub-complex} associated to a $\Gamma$--cell complex~$X$ which fulfills our general assumption,
 is the cell complex obtained by recursively merging orbit-wise all the pairs of cells satisfying 
 Conditions~$\cellCondition$ and~$\IsomorphismConditionSymbol$;
 and cutting off edges that admit a terminal vertex together with which they satisfy Condition~$\IsomorphismConditionSymbol$.
\end{df}

\begin{theorem} \label{pivotal}
 There is an isomorphism between the $\ell$--primary parts of the Farrell cohomology of~$\Gamma$ and the
 $\Gamma$--equivariant Farrell cohomology of the reduced $\ell$--torsion sub-complex.
\end{theorem}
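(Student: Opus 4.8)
The plan is to assemble the statement by chaining three kinds of isomorphism along the recursive construction of the reduced $\ell$--torsion sub-complex: the identification of Proposition~\ref{Brown's proposition}, the merging step of Lemma~\ref{A}, and one further reduction lemma handling the ``cutting off'' operation.

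First I would record that by Proposition~\ref{Brown's proposition} the canonical map identifies $\Farrell^*(\Gamma;M)_{(\ell)}$ with $\Farrell^*_\Gamma(X;M)_{(\ell)}$, and that the latter equals $\Farrell^*_\Gamma(X_{(\ell)};M)_{(\ell)}$: in the $\Gamma$--equivariant Farrell cohomology spectral sequence the $E_1$--contribution of a cell $\sigma$ is $\Farrell^*(\Gamma_\sigma;M)_{(\ell)}$, which vanishes as soon as $\Gamma_\sigma$ contains no element of order $\ell$, and since $X_{(\ell)}$ is a $\Gamma$--subcomplex the surviving part is precisely the spectral sequence of $X_{(\ell)}$ (this is the remark made right after the definition of the $\ell$--torsion sub-complex). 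Hence it only remains to show that passing from $X_{(\ell)}$ to the reduced $\ell$--torsion sub-complex leaves $\Farrell^*_\Gamma(-;M)_{(\ell)}$ unchanged.

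By construction the reduced complex is obtained from $X_{(\ell)}$ by two kinds of moves applied recursively: (i) orbit-wise merging a pair of $n$--cells along an $(n-1)$--cell subject to Conditions~$\cellCondition$ and~$\IsomorphismConditionSymbol$, and (ii) cutting off a terminal edge $e$ together with a terminal vertex $v$ with which it satisfies Condition~$\IsomorphismConditionSymbol$. Move (i) preserves $\Farrell^*_\Gamma(-;M)_{(\ell)}$ by Lemma~\ref{A}. For move (ii) I would establish a companion lemma in the same spirit: writing $Z$ for the complex before the move and $Z' = Z \setminus (\Gamma e \cup \Gamma v)$, the terminality of $v$ (no adjacent higher-dimensional cells, unique adjacent edge) makes $Z'$ again a $\Gamma$--subcomplex, and in the long exact sequence of the $\Gamma$--pair $(Z,Z')$ the relative term $\Farrell^*_\Gamma(Z,Z';M)_{(\ell)}$ is computed from the two cell orbits $\Gamma e$ and $\Gamma v$, with single $d_1$--differential the restriction map $\Farrell^*(\Gamma_v;M)_{(\ell)} \to \Farrell^*(\Gamma_e;M)_{(\ell)}$; Condition~$\IsomorphismConditionSymbol$ makes this an isomorphism --- a mod~$\ell$ homology isomorphism between finite groups induces an isomorphism on $\ell$--primary Farrell cohomology with any trivial coefficients, via universal coefficients and the standard description of the Farrell cohomology of finite groups --- so the relative term vanishes and $\Farrell^*_\Gamma(Z;M)_{(\ell)} \cong \Farrell^*_\Gamma(Z';M)_{(\ell)}$. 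Equivalently one may run a Mayer--Vietoris argument for $Z = Z' \cup \Gamma\overline{e}$ glued along the orbit of the non-terminal endpoint of $e$.

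Finally I would note that each of the moves (i) and (ii) strictly decreases the number of $\Gamma$--orbits of cells, so for cell complexes with finitely many orbits --- the situation in all our applications --- the recursion terminates after finitely many steps; composing the isomorphisms supplied by Lemma~\ref{A} and the companion lemma along the resulting chain of moves, and prepending the identifications of the second paragraph, yields $\Farrell^*(\Gamma;M)_{(\ell)} \cong \Farrell^*_\Gamma(\widetilde{X_{(\ell)}};M)_{(\ell)}$ for the reduced $\ell$--torsion sub-complex $\widetilde{X_{(\ell)}}$, which is the assertion. The main obstacle is move (ii): Lemma~\ref{A} does not literally cover it because the dimension drops from an $n$--cell pair to an edge--vertex pair, so one must re-run the spectral-sequence (or pair, or Mayer--Vietoris) bookkeeping by hand, and there be careful that Condition~$\IsomorphismConditionSymbol$, which is phrased via mod~$\ell$ homology, really does deliver the needed isomorphism of $\ell$--primary Farrell cohomology modules rather than merely of mod~$\ell$ (co)homology.
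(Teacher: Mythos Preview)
Your proposal is correct and follows essentially the same route as the paper: apply Proposition~\ref{Brown's proposition}, then invoke Lemma~\ref{A} along each step of the reduction. The paper's proof is in fact terser than yours and simply asserts that Lemma~\ref{A} also covers the cutting-off move; your companion argument for move~(ii) fills in precisely the detail the paper elides, and your explicit passage from $X$ to $X_{(\ell)}$ and the termination remark are likewise implicit in the paper but not spelled out there.
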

\begin{proof}
 We apply Proposition~\ref{Brown's proposition} to the cell complex $X$,
 and then we apply Lemma~\ref{A} each time that we orbit-wise merge a pair of cells of the $\ell$--torsion sub-complex,
 or that we cut off an edge.
\end{proof}

In order to have a practical criterion for checking Condition~$\IsomorphismConditionSymbol$, 
we make use of the following stronger condition.

Here, we write ${\rm N}_{\Gamma_\sigma}$ for taking the normalizer in ${\Gamma_\sigma}$ and 
${\rm Sylow}_\ell$ for picking an arbitrary Sylow $\ell$--subgroup.
 This is well defined because all Sylow $\ell$--subgroups are conjugate.
We use Zassenhaus's notion for a finite group to be $\ell$--\emph{normal},
 if the center of one of its Sylow $\ell$--subgroups is the center of every Sylow $\ell$--subgroup in which it is contained.

\begin{ConditionBprime} 
The group $\Gamma_\sigma$ admits a (possibly trivial) normal subgroup $T_\sigma$ with trivial mod~$\ell$ homology
and with quotient group $G_\sigma$; and the group $\Gamma_{\tau_1}$ admits a (possibly trivial) normal subgroup 
$T_\tau$ with trivial mod~$\ell$ homology and with quotient group $G_\tau$ making the sequences 
\begin{center}
 $ 1 \to T_\sigma \to \Gamma_\sigma \to G_\sigma \to 1$ and $ 1 \to T_\tau \to \Gamma_{\tau_1} \to G_\tau \to 1$
\end{center}
exact and satisfying one of the following.
\begin{enumerate}
 \item  Either $G_\tau \cong G_\sigma$, or
 \item $G_\sigma$ is $\ell$--normal and $G_\tau \cong {\rm N}_{G_\sigma}({\rm center}({\rm Sylow}_\ell(G_\sigma)))$, or
 \item both $G_\sigma$  and $G_\tau$ are $\ell$--normal and there is a (possibly trivial) group $T$
 with trivial mod~$\ell$ homology making the sequence
$$1 \to T \to {\rm N}_{G_\sigma}({\rm center}({\rm Sylow}_\ell(G_\sigma))) \to {\rm N}_{G_\tau}({\rm center}({\rm Sylow}_\ell(G_\tau))) \to 1$$
exact.
\end{enumerate}
\end{ConditionBprime}

\begin{lemma} \label{Implying the isomorphism condition}
Condition B' implies Condition B.
\end{lemma}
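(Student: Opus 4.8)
The plan is to show that Condition B' forces the inclusion-induced map $\Homol_*(\Gamma_{\tau_1};\Z/\ell)\to\Homol_*(\Gamma_\sigma;\Z/\ell)$ to be an isomorphism, treating the three cases of the disjunction in turn. The backbone in all three cases is the following elementary fact: if $1\to T\to G\to Q\to 1$ is a short exact sequence of finite groups with $\Homol_*(T;\Z/\ell)$ trivial (i.e.\ concentrated in degree $0$), then the Lyndon--Hochschild--Serre spectral sequence $E^2_{pq}=\Homol_p(Q;\Homol_q(T;\Z/\ell))$ collapses onto the row $q=0$, yielding $\Homol_*(G;\Z/\ell)\cong\Homol_*(Q;\Z/\ell)$ induced by the quotient map $G\to Q$. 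Applying this to both sequences $1\to T_\sigma\to\Gamma_\sigma\to G_\sigma\to 1$ and $1\to T_\tau\to\Gamma_{\tau_1}\to G_\tau\to 1$ reduces the problem to showing that the relevant map $\Homol_*(G_\tau;\Z/\ell)\to\Homol_*(G_\sigma;\Z/\ell)$ is an isomorphism, where I must be slightly careful that the map in question is compatible with the inclusion $\Gamma_{\tau_1}\subset\Gamma_\sigma$ (this is automatic since $T_\tau$ maps into $T_\sigma$: a subgroup of trivial mod-$\ell$ homology maps to one, and naturality of the spectral sequences gives the compatible square).

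In case (1), $G_\tau\cong G_\sigma$ and there is nothing more to prove once the reduction above is in place. In case (2), $G_\sigma$ is $\ell$-normal and $G_\tau\cong{\rm N}_{G_\sigma}({\rm center}({\rm Sylow}_\ell(G_\sigma)))$; here I invoke the classical theorem of Zassenhaus (see Brown, \cite{Brown}*{Ch.~X}, or the original source) that for an $\ell$-normal finite group $G$, the inclusion ${\rm N}_G({\rm center}({\rm Sylow}_\ell(G)))\hookrightarrow G$ induces an isomorphism on mod-$\ell$ cohomology, hence also on mod-$\ell$ homology by the universal coefficient theorem (or by dualizing, since we are over a field). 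Case (3) is then the composite of the previous two mechanisms: apply Zassenhaus's theorem to each of $G_\sigma$ and $G_\tau$ to replace them by the normalizers of the centers of their Sylow $\ell$-subgroups, and then apply the spectral-sequence collapse once more to the exact sequence $1\to T\to{\rm N}_{G_\sigma}(\cdots)\to{\rm N}_{G_\tau}(\cdots)\to 1$ with $T$ of trivial mod-$\ell$ homology, chaining the three isomorphisms together.

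The main obstacle I anticipate is not any single computation but the bookkeeping of naturality: one must check at each stage that the isomorphisms produced are the ones induced by the group homomorphisms appearing in Condition B' and that their composite is exactly the map induced by the inclusion $\Gamma_{\tau_1}\subset\Gamma_\sigma$ that Condition B asks about. This requires using the functoriality of the LHS spectral sequence with respect to maps of group extensions, and, in cases (2) and (3), the fact that the Zassenhaus isomorphism is induced by an honest subgroup inclusion so that it composes correctly with the quotient maps $\Gamma_\sigma\to G_\sigma$ and $\Gamma_{\tau_1}\to G_\tau$. Once this compatibility is laid out, each of the three cases is a short formal argument, and Condition B follows.
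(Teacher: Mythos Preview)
Your approach is essentially the same as the paper's: the paper's proof consists of the single sentence ``We combine Theorem~\ref{Gruen-Swan} and Lemma~\ref{extension} in the obvious way,'' where Lemma~\ref{extension} is exactly your LHS spectral-sequence collapse for extensions with mod~$\ell$-acyclic kernel, and Theorem~\ref{Gruen-Swan} is Swan's extension of Gr\"un's Second Theorem (the result you attribute to Zassenhaus---Zassenhaus introduced the notion of $\ell$-normal, but the cohomological isomorphism is Swan's). Your write-up simply unpacks this one line into the three cases and, commendably, flags the naturality bookkeeping that the paper leaves implicit; one small caution is that your parenthetical ``$T_\tau$ maps into $T_\sigma$'' is not guaranteed by Condition~$\technicalCondition$ as stated, so that compatibility should be treated as part of the hypothesis rather than asserted as automatic.
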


For the proof of ( B'(2) $\Rightarrow$ B), we use Swan's extension \cite{Swan1960}*{final corollary}
 to Farrell cohomology of the Second Theorem of Gr\"un \cite{Gruen}*{Satz 5}.
\begin{theorem}[Swan] \label{Gruen-Swan}
 Let $G$ be a $\ell$--normal finite group, and let $N$ be the normalizer of  the center of a Sylow $\ell$--subgroup of $G$.
Let $M$ be any trivial $G$--module. Then the inclusion and transfer maps both are isomorphisms between the $\ell$--primary components of
$ \Farrell^*(G; \thinspace M)$ and  $\Farrell^*(N; \thinspace M)$.
\end{theorem}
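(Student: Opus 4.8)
The plan is as follows. Since $G$ is finite, its Farrell cohomology is ordinary Tate cohomology, so $\Farrell^*(G;\thinspace M)$ and $\Farrell^*(N;\thinspace M)$ carry restriction and transfer (corestriction) maps satisfying $\mathrm{cor}^G_N\circ\mathrm{res}^G_N=[G:N]\cdot\mathrm{id}$. I would prove that the inclusion-induced map $\mathrm{res}^G_N$ restricts to an isomorphism on $\ell$--primary components; the transfer then comes for free, because $P\leq N$ forces $\ell\nmid[G:N]$, so $[G:N]$ acts invertibly on the $\ell$--primary part and $\mathrm{cor}^G_N=[G:N]\cdot(\mathrm{res}^G_N)^{-1}$ is automatically the inverse isomorphism. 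Thus it suffices to treat the inclusion map.

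First I would pass to a common Sylow $\ell$--subgroup. Fix $P\in\mathrm{Syl}_\ell(G)$; as $Z:=Z(P)$ is characteristic in $P$, the group $P$ normalises $Z$, so $P\leq N$ and hence $P\in\mathrm{Syl}_\ell(N)$. Because $\mathrm{cor}^G_P\circ\mathrm{res}^G_P=[G:P]\cdot\mathrm{id}$ is multiplication by an $\ell$--unit on the $\ell$--primary part, $\mathrm{res}^G_P$ is injective there, and the same holds for $\mathrm{res}^N_P$. Thus both $\Farrell^*(G;\thinspace M)_{(\ell)}$ and $\Farrell^*(N;\thinspace M)_{(\ell)}$ embed into $\Farrell^*(P;\thinspace M)$. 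By the Cartan--Eilenberg description of the image of restriction to a Sylow subgroup, which carries over to Farrell cohomology \cite{Brown}*{chapter X}, these two images are precisely the subgroups of $G$--stable and of $N$--stable elements of $\Farrell^*(P;\thinspace M)$. Since $N\leq G$ imposes only a subset of the stability constraints, every $G$--stable class is $N$--stable, and under the two embeddings $\mathrm{res}^G_N$ becomes the inclusion of $G$--stable into $N$--stable elements. The theorem is therefore reduced to the converse: every $N$--stable element of $\Farrell^*(P;\thinspace M)$ is already $G$--stable.

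The heart of the proof is this converse, and it is exactly where $\ell$--normality enters. The hypothesis says that $Z=Z(P)$ is the centre of every Sylow $\ell$--subgroup containing it; equivalently, $Z$ is weakly closed in $P$ with respect to $G$, so no $G$--conjugate of $Z$ other than $Z$ itself lies in $P$. This is precisely the hypothesis of the Second Theorem of Gr\"un \cite{Gruen}*{Satz 5}, whose cohomological content is that the $G$--fusion governing the stability conditions on $\Farrell^*(P;\thinspace M)$ is already visible inside $N=\mathrm{N}_G(Z)$: the conjugations by $g\in G\setminus N$ entering the $G$--stability condition are forced, by weak closure of $Z$, to agree on the relevant intersections $P\cap{}^{g}P$ with conjugations realisable in $N$. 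I would carry this comparison out degree by degree, which is exactly Swan's extension of Gr\"un's theorem from ordinary to Farrell cohomology \cite{Swan1960}, to conclude that the $N$--stable and $G$--stable elements coincide.

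I expect this third step to be the main obstacle, because $\ell$--normality supplies only the weak closure of the single subgroup $Z(P)$ and not full control of the $G$--fusion of all subgroups of $P$ by $N$; the work lies in verifying that this weaker input nonetheless forces the cohomological stability conditions for $g\in G\setminus N$ to follow from those for $g\in N$. Once the equality of $N$--stable and $G$--stable elements is established, the reduction carried out in the first two paragraphs shows that $\mathrm{res}^G_N$ is an isomorphism on $\ell$--primary parts, whence $\mathrm{cor}^G_N=[G:N]\cdot(\mathrm{res}^G_N)^{-1}$ is one as well; this yields the asserted isomorphisms induced by the inclusion and by the transfer.
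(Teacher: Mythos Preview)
The paper does not supply its own proof of this theorem: it is stated with attribution to Swan and cited from \cite{Swan1960}*{final corollary} as an established tool. Your sketch is the standard route to Swan's result---reduce to the Cartan--Eilenberg stable-elements description in a common Sylow $\ell$--subgroup, then use $\ell$--normality (equivalently, weak closure of $Z(P)$ in $P$ with respect to $G$) to show that the $G$--stability and $N$--stability conditions on $\Farrell^*(P;\thinspace M)$ coincide---and you correctly identify the third step as the non-formal part, deferring it to Swan's argument just as the paper does.

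One small caution on your first paragraph: from $\mathrm{cor}^G_N\circ\mathrm{res}^G_N=[G:N]\cdot\mathrm{id}$ and $\ell\nmid[G:N]$ you get that $\mathrm{res}^G_N$ is split injective on the $\ell$--primary part, not that $\mathrm{cor}^G_N=[G:N]\cdot(\mathrm{res}^G_N)^{-1}$ before you know $\mathrm{res}^G_N$ is surjective. Once you have established that $\mathrm{res}^G_N$ is an isomorphism on $\ell$--primary parts via the stable-elements argument, the displayed identity then forces $\mathrm{cor}^G_N$ to be its inverse up to the unit $[G:N]$, hence an isomorphism too; so the logic is fine, but the order of deductions should be stated that way.
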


For the proof of ( B'(3) $\Rightarrow$ B),
 we make use of the following direct consequence of the Lyndon--Hochschild--Serre spectral sequence. 
\begin{lemma} \label{extension}
 Let $T$ be a group with trivial mod $\ell$ homology, and consider any group extension
$$ 1 \to T \to E \to Q \to 1.$$
Then the map $E \to Q$ induces an isomorphism on mod $\ell$ homology.
\end{lemma}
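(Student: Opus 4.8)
The plan is to run the Lyndon--Hochschild--Serre spectral sequence of the extension $1 \to T \to E \to Q \to 1$ in homology with $\Z/\ell$--coefficients. Its $E^2$--page reads $E^2_{p,q} = \Homol_p\bigl(Q; \thinspace \Homol_q(T; \thinspace \Z/\ell)\bigr)$, converging to $\Homol_{p+q}(E; \thinspace \Z/\ell)$. Here $Q$ acts on $\Homol_q(T; \thinspace \Z/\ell)$ via the conjugation action of $E$ on $T$, but this is irrelevant: by hypothesis $T$ has trivial mod $\ell$ homology, meaning $\Homol_0(T; \thinspace \Z/\ell) = \Z/\ell$ and $\Homol_q(T; \thinspace \Z/\ell) = 0$ for all $q > 0$. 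Hence $E^2_{p,q}$ vanishes for $q > 0$, and the spectral sequence is concentrated on the single row $q = 0$, where $E^2_{p,0} = \Homol_p\bigl(Q; \thinspace \Z/\ell\bigr)$ with the now necessarily trivial $Q$--action on $\Z/\ell$.

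A spectral sequence concentrated in a single row degenerates at $E^2$: there is no room for any nonzero differential $d^r$ with $r \geq 2$, since such a differential would have to enter or leave a group in row $q \neq 0$. Therefore $E^\infty_{p,0} = E^2_{p,0} = \Homol_p(Q; \thinspace \Z/\ell)$, and the filtration of $\Homol_n(E; \thinspace \Z/\ell)$ has only one nonzero quotient, giving an isomorphism $\Homol_n(E; \thinspace \Z/\ell) \cong \Homol_n(Q; \thinspace \Z/\ell)$ for every $n$. The final step is to check that this isomorphism is the edge homomorphism induced by the quotient map $E \to Q$, so that the statement ``the map $E \to Q$ induces an isomorphism on mod $\ell$ homology'' holds as asserted; this is the standard identification of the edge map along the base of the LHS spectral sequence.

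There is no serious obstacle here: the only point requiring any care is the identification of the abstract isomorphism coming from the collapse with the naturally induced map $\Homol_*(E;\thinspace \Z/\ell) \to \Homol_*(Q;\thinspace \Z/\ell)$, which is precisely the definition of the edge morphism and is functorial in the extension. Everything else is the formal observation that a first-quadrant spectral sequence supported on one row cannot have nontrivial differentials or extension problems.
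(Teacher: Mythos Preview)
Your proposal is correct and follows essentially the same argument as the paper: run the Lyndon--Hochschild--Serre spectral sequence, observe that the hypothesis on $T$ forces it to collapse onto the row $q=0$ with trivial $Q$--action on the coefficients, and read off the isomorphism. You are in fact slightly more careful than the paper in explicitly identifying the resulting isomorphism with the edge map induced by $E \to Q$.
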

This statement may look like a triviality,
 but it becomes wrong as soon as we exchange the r\^oles of $T$ and $Q$ in the group extension.
In degrees $1$ and $2$, our claim follows from \cite{Brown}*{VII.(6.4)}. 
In arbitrary degree, it is more or less known and we can proceed through the following easy steps.
\begin{proof}
 Consider the Lyndon--Hochschild--Serre spectral sequence associated to the group extension, namely
\begin{center}
 $ E^2_{p,q} = \Homol_p( Q; \thinspace \Homol_q(T; \thinspace \Z/\ell))$ converges to
 $ \Homol_{p+q}(E; \thinspace \Z/\ell).$
\end{center}
By our assumption, $\Homol_q(T; \thinspace \Z/\ell)$ is trivial,
so this spectral sequence concentrates in the row $q=0$, degenerates on the second page and yields isomorphisms
\begin{equation}\label{star}
 \Homol_p( Q; \thinspace \Homol_0(T; \thinspace \Z/\ell)) \cong \Homol_{p}(E; \thinspace \Z/\ell).
\end{equation}
As for the modules of co-invariants, we have $\left( (\Z/\ell)_T \right)_Q \cong (\Z/\ell)_E$ \medspace \quad \cite{McCleary},
the trivial actions of $E$ and $T$ induce that also the action of $Q$ on the coefficients in
 $\Homol_0(T; \thinspace \Z/\ell)$ is trivial.
Thus, Isomorphism~(\ref{star}) becomes
$\Homol_p( Q; \thinspace \Z/\ell) \cong \Homol_{p}(E; \thinspace \Z/\ell).$
\end{proof}
The above lemma directly implies that any extension of two groups both having trivial mod~$\ell$ homology,
 again has trivial mod $\ell$ homology.

\begin{proof}[Proof of Lemma~\emph{\ref{Implying the isomorphism condition}}]
 We combine Theorem \ref{Gruen-Swan} and Lemma \ref{extension} in the obvious way.
\end{proof}

\begin{remark}
 The computer implementation \cite{HAP} checks Conditions~$\technicalCondition (1)$ and $\technicalCondition (2)$ for each pair of cell stabilizers,
using a presentation of the latter in terms of matrices, permutation cycles or generators and relators.
In the below examples however,
 we do avoid this case-by-case computation by a general determination of the isomorphism types of pairs of cell stabilizers
 for which group inclusion induces an isomorphism on mod $\ell$ homology.
The latter method is to be considered as the procedure of preference, 
because it allows us to deduce statements that hold for the whole class of concerned groups.
\end{remark}

\section{Farrell cohomology of the Coxeter tetrahedral groups} \label{Coxeter_groups}
\begin{wrapfigure}[16]{r}[15pt]{50mm}
\begin{center}
\includegraphics[scale = 0.5]{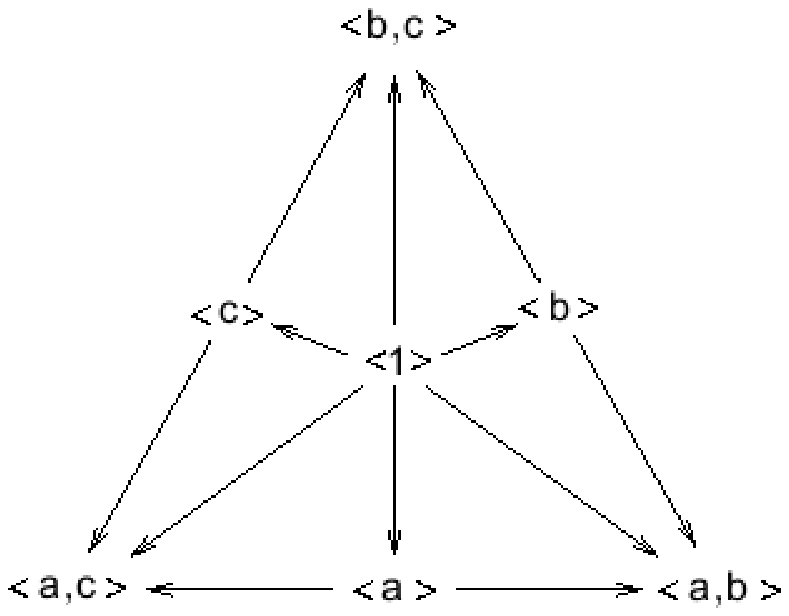}
\caption{Quotient of the Davis complex for a triangle group (diagram reprinted with the kind permission of Sanchez-Garcia \cite{Sanchez-Garcia_Coxeter}).}
\label{fig:trianglesd}
\end{center}
\end{wrapfigure}
Recall that a Coxeter group is a group admitting a presentation
$$\langle g_1, g_2, ..., g_n \medspace | \medspace (g_i g_j)^{m_{i,j}} = 1 \rangle,$$
where $m_{i,i} = 1$; for $i \neq j$ we have $m_{i,j} \geq 2$;
 and $m_{i,j} = \infty$ is permitted, meaning that $(g_i g_j)$ is not of finite order.
As the Coxeter groups admit a contractible classifying space for proper actions \cite{Davis},
 their Farrell cohomology yields all of their group cohomology.
So in this section, we make use of this fact to determine the latter.
For facts about Coxeter groups, and especially for the Davis complex, we refer to \cite{Davis}.
Recall that the simplest example of a Coxeter group, the dihedral group $\Dn$, is an extension
$$ 1 \to \Z/\ell \to \Dn \to \Z/2 \to 1,$$
so we can make use of the original application~\cite{Wall} of Wall's lemma to obtain its mod $\ell$ homology for prime numbers
 $\ell >2$, 
$$ \Homol_q(\Dn; \thinspace \Z/\ell) \cong 
\begin{cases}
				\Z/\ell, & q = 0, \\
                               \Z/{\rm gcd}(n,\ell), & q \equiv 3 \medspace {\rm or} \medspace 4 \mod 4, \\
				0, & {\rm otherwise}.
\end{cases}
$$
\begin{theorem} \label{small rank Coxeter groups}
 Let $\ell > 2$ be a prime number.
 Let $\Gamma$ be a Coxeter group admitting a Coxeter system with at most four generators,
 and relator orders not divisible by~$\ell^2$.
Let $Z_{(\ell)}$ be the $\ell$--torsion sub-complex of the Davis complex of~$\Gamma$.
If $Z_{(\ell)}$ is at most one-dimensional and its orbit space contains no loop nor bifurcation, then the$\mod \ell$ homology of~$\Gamma$ is isomorphic to 
$\left(\Homol_q(\Dl; \thinspace \Z/\ell)\right)^m$, with $m$ the number of connected components of the orbit space of~$Z_{(\ell)}$.
\end{theorem}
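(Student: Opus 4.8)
\emph{The plan} is to compute the $\ell$--primary part of the (Farrell, hence ordinary) cohomology of $\Gamma$ by reducing the $\ell$--torsion sub-complex $Z_{(\ell)}$ of the Davis complex to a disjoint union of point orbits, and then to identify each surviving cell stabilizer with a group having the same $\ell$--primary Farrell cohomology as $\Dl$.

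\emph{First}, I would set up the comparison. The Davis complex $\Sigma$ of a Coxeter group is a cocompact classifying space for proper actions on which every finite subgroup has convex --- in particular contractible --- fixed point set, so our general assumption is satisfied and Proposition~\ref{Brown's proposition} identifies the $\ell$--primary part of $\Farrell^*(\Gamma;M)$ with $\Farrell^*_\Gamma(\Sigma;M)_{(\ell)}$. By Theorem~\ref{pivotal} this equals the $\Gamma$--equivariant Farrell cohomology of the reduced $\ell$--torsion sub-complex $\widetilde{Z_{(\ell)}}$.

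\emph{Second}, I would pin down the cell stabilizers. In $\Sigma$ they are conjugates of the finite (spherical) parabolic subgroups, which --- since the Coxeter system has at most four generators --- have rank at most three (rank four only in the degenerate case that $\Gamma$ itself is finite). Going through the finite Coxeter groups of rank $\leq 3$ and using that no relator order is divisible by $\ell^2$, each such group $G$ carrying $\ell$--torsion has cyclic Sylow $\ell$--subgroup of order $\ell$, so it is (trivially) $\ell$--normal. Because finite Coxeter groups are strongly real --- every element is inverted by an involution --- one has $|\mathrm{N}_G(\mathrm{Sylow}_\ell(G)) : \mathrm{C}_G(\mathrm{Sylow}_\ell(G))| = 2$, while $\mathrm{C}_G(\mathrm{Sylow}_\ell(G))$ is $\Z/\ell$ or $\Z/(2\ell)$; hence $\mathrm{N}_G(\mathrm{Sylow}_\ell(G))$ is $\Dl$ or $\Dl \times \Z/2$. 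Applying Swan's Theorem~\ref{Gruen-Swan}, and noting that a direct factor of order prime to $\ell$ does not change the $\ell$--primary part, this yields $\Farrell^*(G;M)_{(\ell)} \cong \Farrell^*(\Dl;M)_{(\ell)}$ for every cell stabilizer $G$ occurring in $Z_{(\ell)}$; its value is the mod $\ell$ homology of $\Dl$ computed above via Wall's lemma.

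\emph{Third}, I would carry out the reduction. By hypothesis $Z_{(\ell)}$ is at most one--dimensional and the orbit space ${}_\Gamma\backslash Z_{(\ell)}$ has neither loops nor bifurcations, so each of its connected components is a single point or a finite path with vertices $v_0,\dots,v_k$ and edges $e_1,\dots,e_k$, where $e_i$ joins $v_{i-1}$ and $v_i$. For an incidence of an edge $e$ with an endpoint $v$, the inclusion $\Gamma_e \hookrightarrow \Gamma_v$ is an inclusion of spherical Coxeter groups sharing the Sylow $\ell$--subgroup $\Z/\ell$; I would verify Condition~$\technicalCondition$ for it from the short list of such pairs. Typically $\Gamma_e$ and $\Gamma_v$ have the same quotient modulo a normal subgroup of order prime to $\ell$ (for instance $\Gamma_v / (\Z/2)^3 \cong \Dthree$ when $\Gamma_v \cong (\Z/2)^3 \rtimes \Sthree$, or $\Sfour / (\Z/2)^2 \cong \Dthree$, when $\ell = 3$), which is alternative~(1); the remaining cases (e.g. $\Gamma_v$ the icosahedral group, whose relevant quotient is $\Afive$, with $\mathrm{N}_{\Afive}(\Z/\ell) \cong \Dl$) fall under alternatives~(2) or~(3). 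Lemma~\ref{Implying the isomorphism condition} then gives Condition~$\IsomorphismConditionSymbol$, so such an edge may be cut off together with a terminal endpoint. Cutting edges from the ends of each path one at a time reduces every segment to a single surviving vertex, while an isolated vertex is already reduced; hence $\widetilde{Z_{(\ell)}}$ is a disjoint union of exactly $m$ point orbits, with stabilizers $G_1,\dots,G_m$ as in the second step, and
$$\Farrell^*_\Gamma(\widetilde{Z_{(\ell)}};M)_{(\ell)} \;\cong\; \bigoplus_{i=1}^m \Farrell^*(G_i;M)_{(\ell)} \;\cong\; \bigoplus_{i=1}^m \Farrell^*(\Dl;M)_{(\ell)} .$$
Passing to ordinary homology with $\Z/\ell$--coefficients --- a dimension count dual to cohomology, since $\ell$ is prime, with Farrell and ordinary cohomology agreeing above the (here vanishing) virtual cohomological dimension and the periodicity of $\Farrell^*(\Dl)$ propagating the comparison to all degrees --- gives $\Homol_q(\Gamma;\Z/\ell) \cong \left(\Homol_q(\Dl;\Z/\ell)\right)^m$.

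\emph{The hard part} will be the case analysis in the third step: one must be certain that \emph{every} edge--vertex incidence occurring in the $\ell$--torsion sub-complex of a four--generator Coxeter group satisfies Condition~$\technicalCondition$, with the correct choice of normal subgroups and the correct one of its three alternatives, and that it is precisely the hypothesis ``no loop nor bifurcation'' that rules out a reduced component being a circle or a branched graph --- shapes whose $\Gamma$--equivariant Farrell cohomology would \emph{not} be a single copy of $\Farrell^*(\Dl;M)_{(\ell)}$. The subsidiary point, underlying both ``$\ell$--normal'' and the identification of the normalizers, is to check that the Sylow $\ell$--subgroups of all the occurring stabilizers really are cyclic of order $\ell$, which is where the hypothesis on the relator orders (and the bound on the number of generators) enters.
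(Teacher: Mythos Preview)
Your proof is correct and follows essentially the same route as the paper's. Both use the Davis complex with Proposition~\ref{Brown's proposition} and Theorem~\ref{pivotal}, reduce each connected component of the orbit space of $Z_{(\ell)}$ to a single vertex via Conditions~$\cellCondition$ and~$\technicalCondition$, and identify the surviving stabilizer's mod~$\ell$ homology with that of~$\Dl$.

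The difference is organizational. You argue directly in step~2 that every occurring stabilizer~$G$ has cyclic Sylow~$\ell$--subgroup $\Z/\ell$ and then invoke Swan's theorem to reduce to $N_G(\Z/\ell)$, which you identify as $\Dl$ or $\Dl\times\Z/2$. The paper instead packages everything into Lemma~\ref{finite Coxeter subgroups}, a case-by-case verification of Condition~$\technicalCondition$ over the finite Coxeter groups $\mathbf{A}_1,\mathbf{A}_3,\mathbf{A}_4,\mathbf{B}_3,\mathbf{B}_4,\mathbf{D}_4,\mathbf{H}_3$, and then obtains the $\Dl$--homology ``recursively'' by chaining the resulting isomorphisms down to a dihedral edge stabilizer. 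Your strong-reality remark is a nice way to see $|N_G(\Z/\ell):C_G(\Z/\ell)|\geq 2$, but equality (and the identification of $C_G(\Z/\ell)$ as $\Z/\ell$ or $\Z/2\ell$) still needs the same short inspection of rank~$\leq 3$ finite Coxeter groups that the paper carries out explicitly. The paper is also slightly more careful about the finite rank-$4$ case: it uses the $1$--dimensionality hypothesis to exclude $\mathbf{F}_4$, $\mathbf{H}_4$ and products of two dihedral groups, and includes $\mathbf{A}_4,\mathbf{B}_4,\mathbf{D}_4$ in Lemma~\ref{finite Coxeter subgroups}, where you wave this off as ``degenerate''.
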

The conditions of this theorem are for instance fulfilled by the Coxeter tetrahedral groups;
 we specify the exponent $m$ for them in the tables in Figures~\ref{3-torsionCT1-14}
 through~\ref{5-torsionCT}.
In order to prove Theorem~\ref{small rank Coxeter groups}, we lean on the following technical lemma.
When a group $G$ contains a Coxeter group $H$ properly (i.e. $H \neq G$) as a subgroup, then we call $H$ a Coxeter subgroup of $G$. 
\begin{lemma} \label{finite Coxeter subgroups}
 Let $\ell > 2$ be a prime number; and let $\Gamma_\sigma$ be a finite Coxeter group  with
 $n \leq 4$  generators.
 If $\Gamma_\sigma$ is not a direct product of two dihedral groups and not associated to the Coxeter diagram \emph{\textbf{F}}$_4$
 or \emph{\textbf{H}}$_4$, 
then Condition~$\technicalCondition$ is fulfilled for the triple consisting of $\ell$,
 the group $\Gamma_\sigma$ and any of its Coxeter subgroups $\Gamma_{\tau_1}$
 with $(n-1)$ generators that contains $\ell$--torsion elements. 
\end{lemma}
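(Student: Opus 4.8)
The plan is to prove this by a finite case check driven by the classification of finite Coxeter groups, the recurring mechanism being that quotienting such a group by its largest normal subgroup of order prime to $\ell$ produces one of the small groups $\Sthree$ ($=\mathcal{D}_3$), $\Dfive$, $\Afive$, $\Sfive$, for which the $\ell$--normal clauses of Condition~$\technicalCondition$ together with Grün's theorem (Theorem~\ref{Gruen-Swan}) are immediate. First I would dispose of the vacuous cases: if $\ell\geq 7$, the only finite Coxeter groups of rank $\leq 4$ with $\ell$--torsion are the dihedral groups $\Dn$ with $\ell\mid n$, which have rank $2$, and for any rank-$2$ group $\Gamma_\sigma$ every rank-$1$ standard parabolic is a copy of $\Z/2$, carrying no $\ell$--torsion; so the statement is empty unless $\ell\in\{3,5\}$ and $\Gamma_\sigma$ has $n\in\{3,4\}$ generators. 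Running through the classification and deleting products of two dihedral groups (which include $(\Z/2)^k$, $\Z/2\times\Dn$ and $\Dn\times\mathcal{D}_{n'}$) as well as the types $\mathrm{F}_4$ and $\mathrm{H}_4$, the remaining possibilities are, up to isomorphism: in rank $3$, $\Sfour$ (type $\mathrm{A}_3$), $\Z/2\times\Sfour$ (type $\mathrm{B}_3$) and $\Z/2\times\Afive$ (type $\mathrm{H}_3$); in rank $4$, $\Sfive$ (type $\mathrm{A}_4$), $(\Z/2)^4\rtimes\Sfour$ (type $\mathrm{B}_4$), $(\Z/2)^3\rtimes\Sfour$ (type $\mathrm{D}_4$), and the three groups $\Z/2\times\Gamma'$ with $\Gamma'$ one of the rank-$3$ groups (types $\mathrm{A}_1\times\mathrm{A}_3$, $\mathrm{A}_1\times\mathrm{B}_3$, $\mathrm{A}_1\times\mathrm{H}_3$); here I use the classical identifications $W(\mathrm{B}_3)\cong\Z/2\times\Sfour$ and $W(\mathrm{H}_3)\cong\Z/2\times\Afive$.

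For each such $\Gamma_\sigma$ and each rank-$(n-1)$ standard parabolic $\Gamma_{\tau_1}$ containing $\ell$--torsion, I would take $T_\sigma$ and $T_\tau$ to be the largest normal subgroups of order prime to $\ell$ in $\Gamma_\sigma$ and $\Gamma_{\tau_1}$; in every case these are $2$--groups, hence have trivial mod~$\ell$ homology. Using $O_{\ell'}(A\times B)=O_{\ell'}(A)\times O_{\ell'}(B)$, $O_{3'}(\Sfour)=(\Z/2)^2$, $O_{\ell'}(\Afive)=1$, and (for types $\mathrm{B}_4$, $\mathrm{D}_4$) that the preimage of the Klein four subgroup of $\Sfour$ is a normal $2$--subgroup, one gets $G_\sigma:=\Gamma_\sigma/T_\sigma\in\{\Sthree,\Afive,\Sfive\}$ and $G_\tau:=\Gamma_{\tau_1}/T_\tau\in\{\Sthree,\Dfive,\Afive\}$, each having cyclic (hence abelian) Sylow $\ell$--subgroup and therefore being $\ell$--normal. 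The only further input needed is the Sylow-$\ell$--normalizer computations carried out inside $\Sfive$ and $\Afive$: $N_{\Afive}(\Z/3)\cong\Sthree$, $N_{\Afive}(\Z/5)\cong\Dfive$, $N_{\Sfive}(\Z/3)\cong\Sthree\times\Z/2$, and $N_{\Sthree}(\Z/3)=\Sthree$. Then the verification is immediate: if $G_\sigma\cong G_\tau$, Condition~$\technicalCondition(1)$ holds; if $G_\sigma\cong\Afive$ and $G_\tau$ is its matching Sylow-$\ell$--normalizer, $\Sthree$ when $\ell=3$ and $\Dfive$ when $\ell=5$, Condition~$\technicalCondition(2)$ holds; and if $G_\sigma\cong\Sfive$ (hence $\ell=3$) with $G_\tau\cong\Sthree$, the projection $\Sthree\times\Z/2\twoheadrightarrow\Sthree$ with kernel $\Z/2$ exhibits $N_{\Sfive}(\Z/3)$ as an extension of $N_{\Sthree}(\Z/3)$ by a $2$--group, giving Condition~$\technicalCondition(3)$.

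It then remains to match each pair $(\Gamma_\sigma,\Gamma_{\tau_1})$ to one of the three clauses, which is a short inspection of standard parabolics. For $\Gamma_\sigma$ of type $\mathrm{A}_3$, $\mathrm{B}_3$, $\mathrm{D}_4$, $\mathrm{B}_4$, $\mathrm{A}_1\times\mathrm{A}_3$ or $\mathrm{A}_1\times\mathrm{B}_3$, every parabolic with $3$--torsion has $G_\tau\cong\Sthree=G_\sigma$, so Condition~$\technicalCondition(1)$ applies (and these groups have no $5$--torsion). For $\Gamma_\sigma$ of type $\mathrm{A}_1\times\mathrm{H}_3$ one has $G_\sigma\cong\Afive$: the parabolic of type $\mathrm{H}_3$ gives $G_\tau\cong\Afive$ (Condition~$\technicalCondition(1)$), while those of types $\mathrm{A}_1\times\mathrm{A}_2$ ($\ell=3$) and $\mathrm{A}_1\times\mathrm{H}_2$ ($\ell=5$) give $G_\tau\cong\Sthree$ resp.\ $\Dfive$ (Condition~$\technicalCondition(2)$). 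For $\Gamma_\sigma$ of type $\mathrm{H}_3$, $G_\sigma\cong\Afive$ and the relevant parabolics of types $\mathrm{A}_2$ ($\ell=3$) and $\mathrm{H}_2$ ($\ell=5$) give $G_\tau\cong\Sthree$ resp.\ $\Dfive$ (Condition~$\technicalCondition(2)$). Finally for $\Gamma_\sigma$ of type $\mathrm{A}_4$ necessarily $\ell=3$, $G_\sigma\cong\Sfive$, and its parabolics with $3$--torsion (types $\mathrm{A}_3$ and $\mathrm{A}_1\times\mathrm{A}_2$) give $G_\tau\cong\Sthree$, handled by Condition~$\technicalCondition(3)$; for $\ell=5$ the types $\mathrm{A}_4$, $\mathrm{B}_4$, $\mathrm{D}_4$ and their products with $\Z/2$ have no parabolic with $5$--torsion, so the Lemma is vacuous there.

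The bookkeeping of which parabolics carry $\ell$--torsion is routine; the genuine content --- and the main obstacle --- is the short list of cases where the naive choice $G_\sigma=G_\tau$ fails, namely the groups built on $\Afive$ or $\Sfive$ over a parabolic built on $\Sthree$ or $\Dfive$, where one must invoke Swan's form of Grün's theorem through clauses $\technicalCondition(2)$ and $\technicalCondition(3)$ and carry out the normalizer identifications in $\Afive$ and $\Sfive$. This is also precisely why the hypotheses must exclude $\mathrm{F}_4$, $\mathrm{H}_4$ and products of two dihedral groups: the type-$\mathrm{F}_4$ group has Sylow $3$--subgroup $(\Z/3)^2$, the type-$\mathrm{H}_4$ group has Sylow subgroups $(\Z/3)^2$ and $(\Z/5)^2$, and $\Dn\times\mathcal{D}_{n'}$ with $\ell\mid\gcd(n,n')$ has a rank-$2$ elementary abelian $\ell$--subgroup, so in each of these the quotient by the maximal normal $\ell'$--subgroup is no longer a small group whose Sylow-$\ell$--normalizer matches a parabolic subgroup, and Condition~$\technicalCondition$ genuinely breaks down.
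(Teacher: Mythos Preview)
Your proof is correct and follows the same case-by-case strategy as the paper's. The chief difference is organizational: you systematically take $T_\sigma$ and $T_\tau$ to be the largest normal $\ell'$-subgroups (always $2$-groups here), which forces $G_\sigma\in\{\Sthree,\Afive,\Sfive\}$ and $G_\tau\in\{\Sthree,\Dfive,\Afive\}$ uniformly across all cases. The paper instead makes ad hoc choices of $T_\sigma$ (trivial for types $\mathrm{A}_3$, $\mathrm{A}_4$, $\mathrm{B}_4$; a proper $2$-group for $\mathrm{B}_3$, $\mathrm{D}_4$, $\mathrm{H}_3$), so that for instance $\mathrm{A}_3$ is handled via $\technicalCondition(2)$ with $G_\sigma=\Sfour$ rather than your $\technicalCondition(1)$ with $G_\sigma=\Sthree$, and $\mathrm{B}_4$ is handled via $\technicalCondition(3)$ with $G_\sigma$ the full group of order $384$ and an explicit normalizer computation there. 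Your uniform reduction buys a shorter list of $(G_\sigma,G_\tau)$ pairs and a cleaner matching to the three clauses; the paper's version has the minor advantage of staying closer to the Coxeter presentations when identifying the parabolics.

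One small point: your dismissal of $\ell\ge 7$ as vacuous relies on reading $\Z/2\times\Dn$ as a ``product of two dihedral groups'' (with $\Z/2=\mathcal{D}_1$). The paper's proof is silent on this case too, and the lemma holds for it anyway via $\technicalCondition(1)$, so this is only a matter of how one parses the hypothesis.
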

\begin{proof}
The dihedral groups admit only Coxeter subgroups with two elements, so without $\ell$--torsion. 
 There are only finitely many other isomorphism types of irreducible finite Coxeter groups with at most four generators,
 specified by the Coxeter diagrams 
$$
\begin{array}{|c|c|c|c|c|c|c|}
\hline &&&&&& \\
{\mathrm{\mathbf A}}_1  & {\mathrm {\mathbf A}}_3           &  {\mathrm {\mathbf A}}_4  & {\mathrm {\mathbf B}}_3 & {\mathrm {\mathbf B}}_4 & {\mathrm {\mathbf D}}_4  & {\mathrm {\mathbf H}}_3  \\
\hline &&&&&& \\
\aTop{\bullet}{}       & \aTop{\CoxeterDiagramOfSfour}{} & \CoxeterDiagramOfSfive  & \aTop{\Bthree}{}      & \Bfour                &  \CoxeterDiagramDfour  & \aTop{\Hthree}{} \\
\hline
  \end{array}
$$
 on which we can check the condition case by case.
\begin{itemize}
 \item[\textbf{A}$_1$.] The symmetric group $\Stwo$ admits no Coxeter subgroups. 
 \item[\textbf{A}$_3$.] The symmetric group $\Sfour$ is $3$--normal; and its Sylow-$3$--subgroups are of type $\Z/3$,
so they are identical to their center.
Their normalizers in~$\Sfour$ match the Coxeter subgroups of type
 $\Dthree$ that one obtains by omitting one of the generators of 
$\Sfour$ at an end of its Coxeter diagram.
The other possible Coxeter subgroup type is $(\Z/2)^2$, obtained by omitting the middle generator in this diagram,
 and contains no $3$--torsion.
 \item[\textbf{A}$_4$.] The Coxeter subgroups with three generators in the symmetric group $\Sfive$ are 
$\Dthree \times \Z/2$ and~$\Sfour$, so we only need to consider $3$--torsion.
The group $\Sfive$ is $3$--normal; the normalizer of the center of any of its Sylow-$3$--subgroups is of type $\Dthree \times \Z/2$.
So for the Coxeter subgroup~$\Sfour$, we use the normalizer $\Dthree$ of its Sylow-$3$--subgroup $\Z/3$;
 and see that Condition~$\technicalCondition (3)$ is fulfilled.
 \item[\textbf{B}$_3$.] We apply Lemma~\ref{extension} to the Coxeter group $(\Z/2)^3 \rtimes \Sthree$,
 and retain only $\Sthree$, which is isomorphic to the only Coxeter subgroup admitting $3$--torsion. 
 \item[\textbf{B}$_4$.] The Coxeter subgroups with three generators are of type $\Sfour$, $\Z/2 \times \Dthree$,
$\Dfour \times \Z/2$ or $(\Z/2)^3 \rtimes \Sthree$, thus for the three of them containing $3$--torsion,
 we use the above methods to relate them to $\Dthree$. The Coxeter group $(\Z/2)^4 \rtimes \Sfour$ is $3$--normal; 
its Sylow-$3$--subgroup is of type $\Z/3$ and admits a normalizer $N$ fitting into the exact sequence
$$1 \to (\Z/2)^2 \to N \to \Dthree \to 1.$$
 \item[\textbf{D}$_4$.] From the Coxeter diagram, we see that the Coxeter subgroups with three generators are $(\Z/2)^3$ and $\Sfour$.
So we only need to compare with the $3$--torsion of $\Sfour$.
For this purpose, we apply Lemma~\ref{extension} to the Coxeter group $(\Z/2)^3 \rtimes \Sfour$. 
 \item[\textbf{H}$_3$.] The symmetry group $\Icos$
of the icosahedron splits as a direct product $\Z/2 \times \Afive$, so by Lemma~\ref{extension},
we can for all primes $\ell>2$ make use of the alternating group $\Afive$ as the quotient group in Condition~$\technicalCondition$.
The primes other than $2$, at which the homology of $\Afive$ admits torsion, are $3$ and $5$.
So now let $\ell$ be $3$ or $5$.
Then the group $\Afive$ is $\ell$--normal; and its Sylow-$\ell$--subgroups are of type $\Z/\ell$,
so they are identical to their center. Their normalizers in $\Afour$ are of type $\Dl$.
From the Coxeter diagram, we see that this is the only Coxeter subgroup type with two generators that contains $\ell$--torsion.
\end{itemize}
The case where we have a direct product of the one-generator Coxeter group $\Z/2$ with one of the above groups,
 is already absorbed by Condition~$\technicalCondition$.
\end{proof}
\begin{proof}[Proof of Theorem~\emph{\ref{small rank Coxeter groups}}.]
 The Davis complex is a finite-dimensional cell complex with a cellular action of the Coxeter group~$\Gamma$
 with respect to which it is constructed,
 such that each cell stabilizer fixes its cell point-wise.
 Also, it admits the property that the fixed point sets of the finite subgroups of~$\Gamma$ are acyclic \cite{Davis}.
 Thus by Proposition~\ref{Brown's proposition}, the $\Gamma$--equivariant Farrell cohomology of the Davis complex gives us the 
 $\ell$--primary part of the Farrell cohomology of~$\Gamma$.
 As the $3$--torsion sub-complex for the group generated by the Coxeter diagram \textbf{F}$_4$ (the symmetry group of the $24$--cell)
 and the $3$-- and $5$--torsion sub-complexes for the group generated by the Coxeter diagram \textbf{H}$_4$ (the symmetry group of the $600$--cell)
 as well as the $\ell$--torsion sub-complex of a direct product of two dihedral groups with $\ell$--torsion
 all contain $2$--cells, we are either in the case where the
 $\ell$--torsion sub-complex is trivial or in the case in which we suppose to be from now on,
 namely where $\Gamma$ is not one of the groups just mentioned. 
 Then all the finite Coxeter subgroups of~$\Gamma$ fulfill the hypothesis of Lemma~\ref{finite Coxeter subgroups},
 and hence all pairs of a vertex stabilizer and the stabilizer of an adjacent edge satisfy Condition~$\technicalCondition$.
 By the assumptions on~$Z_{(\ell)}$, also Condition~$\cellCondition$ is fulfilled for any pair of adjacent edges in $Z_{(\ell)}$.
 Hence, every connected component of the reduced $\ell$--torsion sub-complex is a single vertex.
 From recursive use of Lemma~\ref{finite Coxeter subgroups} and the assumption that the relator orders are not divisible by~$\ell^2$,
 we see that the stabilizer of the latter vertex has the mod~$\ell$ homology of $\Dl$.
 Theorem~\ref{pivotal} now yields our claim.
\end{proof}

Let us determine the exponent $m$ of Theorem~\ref{small rank Coxeter groups} for some classes of examples.
\\
 The \emph{Coxeter triangle groups} are given by the presentation
$$
    \langle\, a, b, c \;|\; a^2 = b^2 = c^2 = (ab)^p = (bc)^q =
    (c a)^r = 1 \,\rangle\, ,
$$
where $2 \leq p,q,r \in \N$ and $\frac{1}{p} + \frac{1}{q} +
\frac{1}{r} \le 1$. 

\vbox{
\begin{proposition}
For any prime $\ell>2$, the$\mod \ell$ homology of a Coxeter triangle group is given as the direct sum over 
the$\mod \ell$ homology of the dihedral groups ${\mathcal D}_p$, ${\mathcal D}_q$ and ${\mathcal D}_r$.
\end{proposition}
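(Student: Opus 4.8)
The plan is to apply the reduction procedure of Section~\ref{conjugacy reduction} to the Davis complex of the triangle group $\Gamma$, and to observe that for $\ell > 2$ its $\ell$--torsion sub-complex is already reduced and of the simplest possible shape. First I would recall the Davis chamber $K$ of $\Gamma$ (the quotient of the Davis complex, a strict fundamental domain): it is the geometric realisation of the poset of spherical subsets of the generating set $S = \{a,b,c\}$, and because $\tfrac1p+\tfrac1q+\tfrac1r \leq 1$ the set $S$ itself is not spherical, so $K$ is two--dimensional --- a disk obtained by coning off, at the barycentre $v_\emptyset$, the hexagon with vertices $v_{\{a\}}, v_{\{a,b\}}, v_{\{b\}}, v_{\{b,c\}}, v_{\{c\}}, v_{\{c,a\}}$. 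Since the stabiliser of the cell attached to a chain of spherical subsets is the special subgroup indexed by the smallest subset in the chain, the vertices $v_{\{a,b\}}, v_{\{b,c\}}, v_{\{c,a\}}$ have stabilisers $\mathcal{D}_p, \mathcal{D}_q, \mathcal{D}_r$, the vertices $v_{\{a\}}, v_{\{b\}}, v_{\{c\}}$ have stabiliser $\Z/2$, and $v_\emptyset$ together with all twelve edges and all six triangles of $K$ has stabiliser contained in $\Z/2$.

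For a fixed prime $\ell > 2$ I would then read off the $\ell$--torsion sub-complex: among the groups $1, \Z/2, \mathcal{D}_p, \mathcal{D}_q, \mathcal{D}_r$ only a $\mathcal{D}_n$ with $\ell \mid n$ contains an element of order $\ell$, so the orbit space of the $\ell$--torsion sub-complex of the Davis complex consists of the isolated points $v_{\{a,b\}}$ (present iff $\ell \mid p$), $v_{\{b,c\}}$ (iff $\ell \mid q$) and $v_{\{c,a\}}$ (iff $\ell \mid r$), each a single $\Gamma$--orbit with the corresponding dihedral stabiliser. As this sub-complex contains no edges, Conditions~$\cellCondition$ and~$\IsomorphismConditionSymbol$ are never applicable and there is no terminal edge to cut off: the $\ell$--torsion sub-complex already equals its reduced $\ell$--torsion sub-complex.

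Theorem~\ref{pivotal} now identifies the $\ell$--primary part of the Farrell cohomology of $\Gamma$ with the $\Gamma$--equivariant Farrell cohomology of a disjoint union of $0$--cell orbits $\Gamma/\mathcal{D}_p, \Gamma/\mathcal{D}_q, \Gamma/\mathcal{D}_r$, that is,
$$\Farrell^*(\Gamma;M)_{(\ell)} \;\cong\; \Farrell^*(\mathcal{D}_p;M)_{(\ell)} \;\oplus\; \Farrell^*(\mathcal{D}_q;M)_{(\ell)} \;\oplus\; \Farrell^*(\mathcal{D}_r;M)_{(\ell)},$$
the summands with $\ell \nmid n$ contributing nothing in positive degrees. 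Equivalently, applying Proposition~\ref{Brown's proposition} to the Davis complex and running the equivariant homology spectral sequence with $\Z/\ell$--coefficients: for $\ell > 2$ its whole part in rows $q > 0$ is concentrated in the column of $0$--cells, on the three dihedral vertices, with no differential entering or leaving, while its row $q = 0$ is the cellular chain complex of the contractible chamber $K$ and hence acyclic in positive degrees. Reading off the abutment gives
$$\Homol_n(\Gamma;\, \Z/\ell) \;\cong\; \Homol_n(\mathcal{D}_p;\, \Z/\ell) \;\oplus\; \Homol_n(\mathcal{D}_q;\, \Z/\ell) \;\oplus\; \Homol_n(\mathcal{D}_r;\, \Z/\ell)$$
for every $n \geq 1$, whereas $\Homol_0(\Gamma;\Z/\ell) = \Z/\ell$.

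I expect the only point needing care to be this last bookkeeping --- in particular confirming the splitting in the degrees $1$ and $2$ that lie at or below the virtual cohomological dimension ($=2$) rather than strictly above it, where Theorem~\ref{pivotal} gives it for free. Here degree $1$ is harmless, since the abelianisations of $\Gamma$ and of each $\mathcal{D}_n$ are $2$--groups and so contribute nothing modulo an odd prime, and degree $2$ follows from the spectral-sequence shape just described, the entry $\Homol_2(\mathcal{D}_n;\Z/\ell)$ vanishing anyway. Notably, nothing in the argument uses any structural property of the dihedral groups or any bound on the relator orders, so the hypothesis ``relator orders not divisible by~$\ell^2$'' of Theorem~\ref{small rank Coxeter groups} is not needed for triangle groups.
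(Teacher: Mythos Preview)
Your argument is correct and follows essentially the same route as the paper: both observe that for $\ell>2$ the quotient of the $\ell$--torsion sub-complex of the Davis complex consists of isolated vertices with the dihedral stabilisers $\mathcal{D}_p,\mathcal{D}_q,\mathcal{D}_r$, and then invoke Theorem~\ref{pivotal}. Your write-up is in fact more careful than the paper's, since you spell out the Davis chamber explicitly and use the equivariant spectral sequence to cover the degrees at or below the virtual cohomological dimension (which is~$2$ here, not~$0$), whereas the paper simply asserts that Farrell cohomology gives all the group cohomology; your observation that the $\ell^2$-hypothesis of Theorem~\ref{small rank Coxeter groups} is unnecessary for triangle groups is also correct.
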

\begin{proof}
The quotient space of the Davis complex of a Coxeter triangle group can be realized as the barycentric
subdivision of an Euclidean or hyperbolic triangle with interior
angles $\frac{\pi}{p}$, $\frac{\pi}{q}$ and $\frac{\pi}{r}$, and
$a$, $b$ and $c$ acting as reflections through the corresponding
sides. 

We obtain this triangle by realizing the partially ordered set (where arrows stand for inclusions)
 of Figure~\ref{fig:trianglesd}.
 The whole Davis complex of the Coxeter triangle groups is then given as a tessellation of the Euclidean
or hyperbolic plane by these triangles.
The quotient space of the $\ell$--torsion sub-complex then consists of one vertex for each of the dihedral groups 
${\mathcal D}_p$, ${\mathcal D}_q$ and~${\mathcal D}_r$ which contain an element of order $\ell$.
Theorem~\ref{pivotal} now yields the result.
\end{proof}
}

\subsection{Results for the Coxeter tetrahedral groups}

\begin{figure}
 $$\begin{array}{|c|l|c|c|}
\hline  & &&\\
{\rm Name}\medspace {\rm and} \medspace {\rm Coxeter \medspace graph}
 & \begin{array}{c} 5{\rm -torsion}\\ {\rm subcomplex \medspace quotient}\end{array}
 & \begin{array}{c}{\rm reduced  \medspace} 5{\rm -torsion}\\ {\rm subcomplex \medspace quotient}\end{array}
 & \Homol_q(CT(m);\thinspace {\mathbb{F}_5}) \\
\hline  & &&\\ 
\aTop{CT(19),}{\CTnineteen} \aTop{CT(28)}{\CTtwentyeight} &  \ftsNineteen 
& \aTop{\bullet \Dfive}{} & \aTop{\Homol_q(\Dfive;\thinspace {\mathbb{F}_5}) }{}
\\ 
\hline  & &&\\
\aTop{\aTop{CT(20),}{\CTtwenty} \aTop{CT(22),}{ \CTtwentytwo} \aTop{CT(23),}{\CTtwentythree}}{ 
  \aTop{CT(26),}{ \CTtwentysix} \aTop{CT(27),}{\CTtwentyseven} \aTop{CT(29)}{\CTtwentynine} }
& \ftsTwenty & \aTop{\bullet \Dfive}{} & \aTop{\Homol_q(\Dfive;\thinspace {\mathbb{F}_5}) }{}\\
\hline  & &&\\ 
\aTop{CT(21)}{\CTtwentyone} & \ftsTwentyone & \aTop{\bullet \Dfive \bullet \Dfive}{} & 
\aTop{\left(\Homol_q(\Dfive;\thinspace {\mathbb{F}_5})\right)^2 }{}\\ 
\hline  & &&\\
\aTop{CT(24)}{\CTtwentyfour} & \ftsTwentyfour & \aTop{\bullet \Dfive \bullet \Dfive}{} & 
\aTop{\left(\Homol_q(\Dfive;\thinspace {\mathbb{F}_5})\right)^2 }{}\\ 
\hline
\end{array}$$
\caption{$5$--torsion sub-complexes of the Coxeter tetrahedral groups, in the cases where they are non-trivial.}
\label{5-torsionCT}
\end{figure}

\begin{figure}
$$\begin{array}{|c|c|l|c|c|}
\hline & & &&\\
{\rm Name} & { \aTop{\rm Coxeter}{\rm graph}}
 & \begin{array}{c} 3{\rm -torsion}\\ {\rm subcomplex \medspace quotient}\end{array}
 & \begin{array}{c}{\rm reduced  \medspace} 3{\rm -torsion}\\ {\rm subcomplex \medspace quotient}\end{array}
 & \Homol_q(CT(m);\thinspace {\mathbb{F}_3}) \\
\hline & & &&\\ 
\aTop{CT(1)}{} & \CTone & \ttsOne & 
\bullet \Dthree  & \Homol_q(\Dthree;\thinspace {\mathbb{F}_3}) \\
\aTop{CT(2)}{} & \CTtwo & \ttsTwo &
\bullet \Dthree & \Homol_q(\Dthree;\thinspace {\mathbb{F}_3}) \\
\aTop{CT(3)}{} &  \CTthree & \ttsThree & 
\bullet \Dthree  & \Homol_q(\Dthree;\thinspace {\mathbb{F}_3}) \\
\aTop{CT(7)}{} &  \CTseven & \ttsSeven &
\bullet \Dsix \bullet \Dthree & \left(\Homol_q(\Dthree;\thinspace {\mathbb{F}_3})\right)^2 \\
\aTop{CT(8)}{} &  \CTeight & \ttsEight &
\bullet \Dthree \bullet \Dthree  & \left(\Homol_q(\Dthree;\thinspace {\mathbb{F}_3})\right)^2 \\
\aTop{CT(9)}{} &  \CTnine &  \ttsNine &
\bullet \Dthree \bullet \Dthree \bullet \Dthree & \left(\Homol_q(\Dthree;\thinspace {\mathbb{F}_3})\right)^3 \\
\aTop{CT(10)}{} &  \CTten & \ttsTen   & 
\bullet \Dsix \bullet \Dthree & \left(\Homol_q(\Dthree;\thinspace {\mathbb{F}_3})\right)^2 \\
\aTop{CT(11)}{} &  \CTeleven & \ttsEleven & 
\bullet \Dsix \bullet \Dthree & \left(\Homol_q(\Dthree;\thinspace {\mathbb{F}_3})\right)^2 \\
\aTop{CT(12)}{} &  \CTtwelve & {\rm six \medspace copies \medspace of} \medspace \bullet \Dthree &
{\rm six \medspace copies \medspace of} \medspace \bullet \Dthree  & 
\left(\Homol_q(\Dthree;\thinspace {\mathbb{F}_3})\right)^6 \\
\aTop{CT(13)}{} &  \CTthirteen & \ttsThirteen &
\bullet \Dsix \bullet \Dthree \bullet \Dthree \bullet \Dthree & \left(\Homol_q(\Dthree;\thinspace {\mathbb{F}_3})\right)^4 \\
\aTop{CT(14)}{} &  \CTfourteen & \ttsFourteen &
\bullet \Dsix \bullet \Dsix \bullet \Dthree & \left(\Homol_q(\Dthree;\thinspace {\mathbb{F}_3})\right)^3  \\
\aTop{CT(15)}{} &  \CTfifteen & \ttsFifteen&
\aTop{\bullet \Dsix \bullet \Dthree \bullet \Dthree}{} & \aTop{\left(\Homol_q(\Dthree;\thinspace {\mathbb{F}_3})\right)^3}{}  \\
\aTop{CT(16)}{} &  \CTsixteen & \ttsSixteen &
\aTop{\bullet \Dthree \bullet \Dthree \bullet \Dthree}{} & \aTop{\left(\Homol_q(\Dthree;\thinspace {\mathbb{F}_3})\right)^3 }{}\\
\aTop{CT(17)}{} &  \CTseventeen &  \aTop{\bullet \Dsix \bullet \Dsix \bullet \Dthree \bullet \Dthree}{} & 
 \aTop{\bullet \Dsix \bullet \Dsix \bullet \Dthree \bullet \Dthree}{} & 
\aTop{\left(\Homol_q(\Dthree;\thinspace {\mathbb{F}_3})\right)^4}{}  \\
\aTop{CT(18)}{} & \CTeighteen & \ttsEighteen &
\aTop{\bullet \Dthree \bullet \Dthree}{} & \aTop{\left(\Homol_q(\Dthree;\thinspace {\mathbb{F}_3})\right)^2 }{}\\
\hline
\end{array}$$
\caption{$3$--torsion sub-complexes of the Coxeter tetrahedral groups $CT(1)$ through $CT(18)$, in the cases where they are non-trivial.}
\label{3-torsionCT1-14}
\end{figure}

\begin{figure}
$$\begin{array}{|c|c|l|c|c|}
\hline & & &&\\
{\rm Name} & { \aTop{\rm Coxeter}{\rm graph}}
 & \begin{array}{c} 3{\rm -torsion}\\ {\rm subcomplex \medspace quotient}\end{array}
 & \begin{array}{c}{\rm reduced  \medspace} 3{\rm -torsion}\\ {\rm subcomplex \medspace quotient}\end{array}
 & \Homol_q(CT(m);\thinspace {\mathbb{F}_3}) \\
\hline & & &&\\ 
\aTop{CT(19)}{} & \CTnineteen & \ttsNineteen &
 \aTop{\bullet \Dthree}{} & 
\aTop{\Homol_q(\Dthree;\thinspace {\mathbb{F}_3})}{}  \\ 
\aTop{CT(20)}{} & \CTtwenty & \ttsTwenty &
 \aTop{\bullet \Dthree}{} & 
\aTop{\Homol_q(\Dthree;\thinspace {\mathbb{F}_3})}{}  \\ 
\aTop{CT(21)}{} & \CTtwentyone & \ttsTwentyone &
\aTop{\bullet \Dthree \bullet \Dthree}{} & \aTop{\left(\Homol_q(\Dthree;\thinspace {\mathbb{F}_3})\right)^2 }{}\\
\aTop{CT(22)}{} & \CTtwentytwo & \ttsTwentytwo &
\aTop{\bullet \Dthree \bullet \Dthree}{} & \aTop{\left(\Homol_q(\Dthree;\thinspace {\mathbb{F}_3})\right)^2 }{}\\
\aTop{CT(23)}{} & \CTtwentythree & \ttsTwentythree &
 \aTop{\bullet \Dthree}{} & 
\aTop{\Homol_q(\Dthree;\thinspace {\mathbb{F}_3})}{}  \\
\aTop{CT(24)}{} & \CTtwentyfour & \ttsTwentyfour &
 \aTop{\bullet \Dthree}{} & 
\aTop{\Homol_q(\Dthree;\thinspace {\mathbb{F}_3})}{}  \\
\aTop{CT(25)}{} & \CTtwentyfive & \ttsTwentyfive &
 \aTop{\bullet \Dthree}{} & 
\aTop{\Homol_q(\Dthree;\thinspace {\mathbb{F}_3})}{}  \\
\aTop{CT(26)}{} & \CTtwentysix & \ttsTwentysix &
\aTop{\bullet \Dthree \bullet \Dthree}{} & \aTop{\left(\Homol_q(\Dthree;\thinspace {\mathbb{F}_3})\right)^2 }{}\\
\aTop{CT(27)}{} &  \CTtwentyseven & \ttsTwentyseven&
\aTop{\bullet \Dthree \bullet \Dthree \bullet \Dthree}{} & \aTop{\left(\Homol_q(\Dthree;\thinspace {\mathbb{F}_3})\right)^3}{}  \\
\aTop{CT(28)}{} & \CTtwentyeight & \ttsTwentyeight &
\aTop{\bullet \Dsix \bullet \Dthree}{} & \aTop{\left(\Homol_q(\Dthree;\thinspace {\mathbb{F}_3})\right)^2 }{}\\
\aTop{CT(29)}{} & \CTtwentynine & \ttsTwentynine &
\aTop{\bullet \Dsix \bullet \Dthree \bullet \Dthree}{} & \aTop{\left(\Homol_q(\Dthree;\thinspace {\mathbb{F}_3})\right)^3 }{}\\
\aTop{CT(30)}{} & \CTthirty & \ttsThirty &
\aTop{\bullet \Dsix \bullet \Dthree \bullet \Dthree}{} & \aTop{\left(\Homol_q(\Dthree;\thinspace {\mathbb{F}_3})\right)^3 }{}\\
\aTop{CT(31)}{} &  \CTthirtyone & \ttsThirtyone&
\aTop{\bullet \Dthree}{} & \aTop{\Homol_q(\Dthree;\thinspace {\mathbb{F}_3})}{}  \\
\aTop{CT(32)}{} & \CTthirtytwo & \ttsThirtytwo &
\aTop{\bullet \Dsix \bullet \Dthree}{} & \aTop{\left(\Homol_q(\Dthree;\thinspace {\mathbb{F}_3})\right)^2 }{}\\
\hline
\end{array}$$
\caption{$3$--torsion sub-complexes of the Coxeter tetrahedral groups $CT(19)$ through $CT(32)$.}
\label{3-torsionCT15-28}
\end{figure}

Consider the groups that are generated by the reflections on the four sides of a tetrahedron in hyperbolic 3-space,
 such that the images of the tetrahedron tessellate the latter.
Up to isomorphism, there are only thirty-two such groups~\cite{ElstrodtGrunewaldMennicke};
and we call them the Coxeter tetrahedral groups $CT(n)$, with $n$ running from $1$ through $32$.

\begin{proposition}
 For all prime numbers $\ell > 2$,
 the$\mod \ell$ homology of all the Coxeter tetrahedral groups is specified in the tables in Figures~\ref{3-torsionCT1-14}
 through~\ref{5-torsionCT}
 in all the cases where it is non-trivial.
\end{proposition}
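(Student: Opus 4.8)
The plan is to run the reduction of Section~\ref{conjugacy reduction} on the Davis complex of each of the thirty-two groups $CT(n)$, separately for each prime $\ell>2$, and to conclude with Theorem~\ref{pivotal} (equivalently with Theorem~\ref{small rank Coxeter groups}, whose proof is already this reduction). First I would dispose of the primes $\ell\ge 7$. The irreducible finite Coxeter groups that can occur as special subgroups of a Coxeter tetrahedral group are $\mathbf{A}_1$, $\mathbf{I}_2(3)$, $\mathbf{I}_2(4)$, $\mathbf{I}_2(5)$, $\mathbf{I}_2(6)$, $\mathbf{A}_3=\Sfour$, $\mathbf{B}_3$ and $\mathbf{H}_3=\Icos$ — the rank--$4$ group itself being infinite hyperbolic and the dihedral labels being at most $6$ — and the orders of these, hence of their products, have no odd prime divisor besides $3$ and $5$. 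So for $\ell\ge 7$ the $\ell$--torsion sub-complex of the Davis complex is empty, $\Homol_q(CT(n);\F_\ell)=0$ for $q>0$, and there is no non-trivial case to tabulate; it remains to treat $\ell=3$ and $\ell=5$.

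Next I would read the $\ell$--torsion sub-complex $Z_{(\ell)}$ and its orbit space off the Coxeter diagram. The orbit space of the Davis complex is its Davis chamber~\cite{Davis}, a strict fundamental domain whose cells are the chains $T_0\subsetneq\dots\subsetneq T_k$ of spherical subsets of the four generators, the chain carrying the stabilizer $W_{T_0}$, and with distinct chains in distinct $\Gamma$--orbits. Hence the orbit space of $Z_{(\ell)}$ is the subcomplex spanned by the chains with $\ell\mid|W_{T_0}|$; since $\ell$--torsion forces $|T_0|\ge 2$ while sphericity forces $|T_i|\le 3$, every such chain has length at most two. Thus $Z_{(\ell)}$ is at most one--dimensional: its vertices are the $\ell$--singular spherical subsets of rank $2$ (dihedral, hence $\Dl$ or $\Dsix$) or rank $3$, and its edges are the inclusions of an $\ell$--singular rank--$2$ subset into a spherical rank--$3$ one. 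Because at most two of the three rank--$2$ sub-diagrams of a spherical rank--$3$ diagram can be $\ell$--singular (three would make that diagram affine, hence infinite), and a rank--$2$ subset of a $4$--element set lies in at most two rank--$3$ supersets, every vertex has degree $\le 2$; there are no loops since the ends of an edge have different ranks; and an inspection of the thirty-two diagrams shows that no component is a cycle. So each orbit space is a disjoint union of paths, and since the tetrahedral labels lie in $\{2,\dots,6\}$ no relator order is divisible by $\ell^2$; therefore Theorem~\ref{small rank Coxeter groups} applies to every $CT(n)$ and gives $\Homol_q(CT(n);\F_\ell)\cong\left(\Homol_q(\Dl;\F_\ell)\right)^m$ with $m$ the number of path--components.

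To pin down $m$ and obtain the figures I would carry out the reduction explicitly. Every finite Coxeter subgroup of a tetrahedral group has rank $\le 3$, hence is neither $\mathbf{F}_4$ nor $\mathbf{H}_4$ nor a product of two dihedral groups each carrying $\ell$--torsion, so Lemma~\ref{finite Coxeter subgroups} yields Condition~$\technicalCondition$, and therefore Condition~$\IsomorphismConditionSymbol$ by Lemma~\ref{Implying the isomorphism condition}, for every incident pair of a cell stabilizer and a lower--dimensional one in $Z_{(\ell)}$; Condition~$\cellCondition$ is read off the path structure. Cutting off the terminal edges first carries away the rank--$3$ vertices at the ends of each path, and then merging the remaining edges collapses the path to its central rank--$2$ vertex, whose stabilizer is $\Dl$ (or, for $\ell=3$, possibly $\Dsix$), while an isolated vertex is already of this form; and $\Homol_q(\Dsix;\F_3)\cong\Homol_q(\Dthree;\F_3)$ by the Wall formula recalled above, since ${\rm gcd}(6,3)={\rm gcd}(3,3)=3$. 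Matching this against the thirty-two diagrams produces the four columns of Figures~\ref{3-torsionCT1-14}, \ref{3-torsionCT15-28} and~\ref{5-torsionCT}.

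The main obstacle is the bookkeeping in the second step: for each of the thirty-two Coxeter diagrams, systematically determining which subsets of generators are spherical and which are $\ell$--singular, recording how the rank--$2$ and rank--$3$ vertices are linked, and verifying that the resulting graph is a disjoint union of paths so that the clean form of Theorem~\ref{small rank Coxeter groups} is available throughout. This is precisely the information compiled in the tables, which can be cross-checked against the computer implementation~\cite{HAP}.
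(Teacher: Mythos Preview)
Your proof is correct and follows essentially the same route as the paper: verify the hypotheses of Theorem~\ref{small rank Coxeter groups} for each $CT(n)$ and each relevant prime, then read off the number $m$ of components from the Coxeter diagram. The paper's own proof simply works out the example $CT(25)$ in detail and then says ``for the other Coxeter tetrahedral groups, we proceed analogously,'' deferring the bookkeeping to the tables and the machine check~\cite{HAP}. Your version is more explicit: you give a uniform combinatorial description of the orbit space of $Z_{(\ell)}$ via chains of spherical subsets in the Davis chamber, a clean argument that it is $1$--dimensional with all vertex degrees $\le 2$, and you dispose of the primes $\ell\ge 7$ at the outset---all of which the paper leaves implicit. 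The only step you still leave to inspection (that no component is a cycle) is exactly the kind of case check the paper's ``analogously'' covers, so nothing is missing.
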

\begin{proof}
 Consider the Coxeter tetrahedral group $CT(25)$, generated by the Coxeter diagram $\CTtwentyfive$. 
Then the Davis complex of $CT(25)$
 has a strict fundamental domain isomorphic to the barycentric subdivision of the hyperbolic tetrahedron
 the reflections on the sides of which generate $CT(25)$ geometrically.
A strict fundamental domain for the action on the $3$--torsion sub-complex is then the graph
$$\ttsTwentyfive$$
where the labels specify the isomorphism types of the stabilizers,
 namely the dihedral group $\Dthree$, which also stabilizes the edges,
 the symmetric group $\Sfour$ and the semi-direct product $(\Z/2)^2 \rtimes \Sthree$.
The $\ell$--torsion sub-complexes for all greater primes $\ell$ are empty.
By Theorem~\ref{small rank Coxeter groups}, we can reduce the $3$--torsion sub-complex to a single vertex and obtain 
$\Homol_*(CT(25);\thinspace {\mathbb{F}_3}) \cong \Homol_*(\Dthree;\thinspace {\mathbb{F}_3}).$
For the other Coxeter tetrahedral groups, we proceed analogously.
\end{proof}
\mbox{The entries in the tables in Figures \ref{3-torsionCT1-14}--\ref{5-torsionCT}
 have additionally been checked on the machine \cite{HAP}.}

\bigskip
\bigskip

\section{The reduced torsion sub-complexes of the Bianchi groups} \label{The conjugacy classes of finite order elements}

The groups $\mathrm{SL_2}(\ringO_{-m})$ act in a natural way on real hyperbolic three-space~$\Hy$,
 which is isomorphic to the symmetric space $\mathrm{SL_2}(\C)/\mathrm{SU}(2)$  associated to them.
The kernel of this action is the center $\{ \pm 1 \}$ of the groups.
Thus it is useful to study the quotient of $\mathrm{SL_2}(\ringO_{-m})$ by its center, namely $\mathrm{PSL_2}(\ringO_{-m})$,
which we also call a Bianchi group.
Let~$\Gamma$ be a finite index subgroup in $\text{PSL}_2(\mathcal{O}_{-m})$. 
Then any element of~$\Gamma$ fixing a point inside~$\Hy$ acts as a rotation of finite order.
By Klein, we know conversely that any torsion element~$\alpha$ is elliptic and hence fixes some geodesic line.
We call this line \emph{the rotation axis of~$\alpha$}.
Every torsion element acts as the stabilizer of a line conjugate to one passing through the Bianchi fundamental polyhedron.
Let~$X$ be the \textit{refined cellular complex} obtained from the action of~$\Gamma$ on~$\Hy$
 as described in~\cite{Rahm_homological_torsion}, 
namely we subdivide~$\Hy$  until the stabilizer in~$\Gamma$ of any cell $\sigma$ fixes $\sigma$ point-wise. 
We achieve this by computing Bianchi's fundamental polyhedron for the action of~$\Gamma$,
 taking as preliminary set of 2-cells its facets lying on the Euclidean hemispheres
 and vertical planes of the upper-half space model for $\Hy$,
 and then subdividing along the rotation axes of the elements of~$\Gamma$. 

It is well-known that if $\gamma$ is an element of finite order $n$ in a Bianchi group, then $n$ must be 1, 2, 3, 4 or 6,
 because $\gamma$ has eigenvalues $\rho$ and $\overline{\rho}$,
 with $\rho$ a primitive $n$--th root of unity, and the trace of~$\gamma$ is $\rho + \overline{\rho} \in \ringO_{-m} \cap \R = \Z$.
For $\ell$ being one of the two occurring prime numbers $2$ and~$3$, the orbit space of this sub-complex is a finite graph,
 because the cells of dimension greater \mbox{than 1} are trivially stabilized in the refined cellular complex.

For the Bianchi groups, we can see how to construct the reduced torsion sub-complex outside of the geometric model,
by constructing the following conjugacy classes graphs.
Let $\ell$ be a prime number.
For a circle to become a graph, we identify the two endpoints of a single edge.

\vbox{
\begin{definition}
The $\ell$--\emph{conjugacy classes graph} of an arbitrary group~$\Gamma$ is given by the following construction.
\end{definition}
\begin{itemize}
 \item We take as vertices the conjugacy classes of finite subgroups $G$ of~$\Gamma$ containing elements $\gamma$ of order $\ell$ such that the normalizer of $\langle \gamma \rangle$ in $G$ is not $\langle \gamma \rangle$ itself.
\item We connect two vertices by an edge if and only if they admit representatives sharing a common subgroup of order $\ell$.
\item For every pair of subgroups of order $\ell$ in $G$, which are conjugate in~$\Gamma$ but not in~$G$, we draw a circle attached to the vertex labeled by~$G$.
\item For every conjugacy class of subgroups of order $\ell$ which are not properly contained in any finite subgroup of~$\Gamma$, we add a disjoint circle.
\end{itemize}
}

\begin{theorem} \label{isomorphy of graphs}
Let~$\Gamma$ be a finite index subgroup in a Bianchi group with units $\{\pm 1\}$ and $\ell$ any prime number.
 Then the $\ell$--\emph{conjugacy classes graph}
 and the \emph{quotient of the reduced $\ell$--torsion sub-complex} of the action of~$\Gamma$ on hyperbolic $3$--space
 are isomorphic graphs.
\end{theorem}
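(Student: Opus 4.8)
The plan is to make the reduction of Section~\ref{conjugacy reduction} completely explicit for the Bianchi groups and then to recognise the resulting graph as the $\ell$--conjugacy classes graph, cell by cell. First I would dispose of the trivial cases: by Lemma~\ref{finiteSubgroups} the non-trivial finite subgroups of a Bianchi group are exactly $\Z/2$, $\Z/3$, $\Kleinfourgroup$, $\Sthree$, $\Afour$, and every torsion element has order in $\{1,2,3,4,6\}$, so $X_{(\ell)}$ is empty unless $\ell\in\{2,3\}$ and we may assume $\ell=2$ or $\ell=3$. Recall from Section~\ref{The conjugacy classes of finite order elements} that in the refined complex $X$ the cells of dimension larger than one are trivially stabilised, so the orbit space of $X_{(\ell)}$ is a finite graph: its vertices are orbits of points whose stabiliser is one of the five groups above and contains $\ell$--torsion, its edges are orbits of segments of rotation axes of order-$\ell$ elements, and the stabiliser of such a segment is its pointwise stabiliser --- the cyclic group of rotations about the axis fixing the given element, which here is $\Z/\ell$, as $\Z/4$ and $\Z/6$ do not occur.

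Next I would carry out the reduction. Since all five finite groups are $\ell$--normal, Theorem~\ref{Gruen-Swan} together with the mod~$\ell$ homology of $\Z/\ell$, $\Kleinfourgroup$, $\Sthree$ and $\Afour$ yields the dichotomy: for a vertex with stabiliser $G$ and an order-$\ell$ subgroup $\langle\gamma\rangle\le G$, the inclusion $\langle\gamma\rangle\hookrightarrow G$ is an isomorphism on mod~$\ell$ homology --- equivalently, Condition~$\technicalCondition$ holds --- if and only if $\mathrm{N}_G(\langle\gamma\rangle)=\langle\gamma\rangle$. Running through the short list, this says: for $\ell=3$ the only surviving vertex stabiliser is $\Sthree$, while $\Z/3$-- and $\Afour$--vertices get merged through or cut off; for $\ell=2$ the surviving stabilisers are $\Kleinfourgroup$ and $\Afour$, while $\Z/2$-- and $\Sthree$--vertices get merged through or cut off. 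A computation inside each finite group --- counting $G$--orbits of the axis germs at the fixed point, and using that $\mathrm{N}_G(\langle\gamma\rangle)$ contains a reflection of the axis of $\gamma$ exactly for the surviving types --- shows that a vertex of non-surviving type has at most two incident edges in the current quotient, both with stabiliser $\Z/\ell$, so that Conditions~$\cellCondition$ and~$\IsomorphismConditionSymbol$ apply and Lemma~\ref{A} may be invoked; one also checks the side condition that a merged cell contains no two points of one orbit in its interior, using that a maximal chain of $\Z/\ell$--edges runs along geodesics whose pointwise $\Gamma$--stabiliser is $\Z/\ell$. After the reduction, every edge has stabiliser $\Z/\ell$ and every vertex has stabiliser $G$ with $\mathrm{N}_G(\langle\gamma\rangle)\neq\langle\gamma\rangle$; in particular $\Afour$--vertices (for $\ell=2$) and $\Sthree$--vertices (for $\ell=3$) appear as leaves, and loops can occur only at $\Kleinfourgroup$--vertices (for $\ell=2$).

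Then I would build the graph isomorphism. A non-cyclic finite subgroup of a Bianchi group fixes a unique point of $\Hy$, so a surviving vertex orbit is determined by and determines the conjugacy class of its stabiliser; this gives the bijection on vertices onto the vertex set of the $\ell$--conjugacy classes graph. For the edges and circles: a conjugacy class of an order-$\ell$ subgroup $\langle\gamma\rangle$ contributes its rotation axis, on which the surviving vertices are exactly the points $p$ with $\mathrm{N}_{\Gamma_p}(\langle\gamma\rangle)\neq\langle\gamma\rangle$ --- i.e. where $\Gamma_p$ contains a reflection of the axis, hence is a surviving type properly containing $\langle\gamma\rangle$ --- and there is no such point precisely when the axis admits no reflection, equivalently when $\langle\gamma\rangle$ lies in no finite overgroup, a reflection generating a finite $\Dl$. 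As the non-surviving vertices along the axis have exactly two incident edges there, the reduced image of the axis is a single edge between the surviving vertices on it (two, if any) or, if there are none, a free circle. Reading off the endpoints: an edge between distinct surviving vertex classes records a $\Z/\ell$ shared by two of their representatives (the edge clause); a loop at a $\Kleinfourgroup$--vertex forces the two order-$\ell$ subgroups at its ends to form a pair conjugate in $\Gamma$ but not in $\Kleinfourgroup$ (the attached-circle clause); and a free circle is a $\Z/\ell$--class contained in no finite subgroup (the disjoint-circle clause). Verifying that these assignments are mutually inverse and incidence-preserving then gives Theorem~\ref{isomorphy of graphs}.

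The main obstacle I anticipate is the local analysis at the vertices and the ensuing bookkeeping: for each surviving stabiliser type one must pin down the $\Gamma$--orbits of the axis germs through the fixed point, how the reduced axes reattach, and the resulting valences (three at a $\Kleinfourgroup$--vertex, one at an $\Afour$-- or $\Sthree$--vertex), and then confirm that this exactly reproduces the conjugacy classes graph --- in particular that the ``pair conjugate in $\Gamma$ but not in $G$'' clause, which has content only for $G=\Kleinfourgroup$ and $\ell=2$, accounts for precisely the loops the geometry produces, with no over- or under-counting of edges either. The remaining care is with the side condition for the merges and with the claim that reduced edges along different axes are never accidentally identified; both are controlled by the pointwise stabiliser of any rotation axis being $\Z/\ell$ and by the uniqueness of fixed points of the non-cyclic finite subgroups.
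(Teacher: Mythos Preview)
Your approach is essentially the paper's own: reduce to $\ell\in\{2,3\}$ via Lemma~\ref{finiteSubgroups}, identify the surviving vertex types through the normaliser analysis (which the paper records as Lemma~\ref{geometricRigiditytheorem} and the normaliser table following it), and then match edges to conjugacy classes of $\Z/\ell$ via their rotation axes (the paper's Lemma~\ref{conjugacy class correspondence}). The one place where the paper is more explicit is in justifying that each axis contributes exactly \emph{one} reduced edge: this is Lemma~\ref{torsion axis}, which in the $\edgegraph$ case uses the two reflections $\beta,\gamma$ at the $\mathcal{D}_\ell$-type endpoints to tessellate the whole axis by $\Gamma$-translates of a single chain --- your phrase ``the reduced image of the axis is a single edge'' presupposes this, so you should supply the argument that any $\Gamma$-identification between points on the axis of $\gamma$ comes from an element normalising $\langle\gamma\rangle$ and hence preserving the axis.
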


The remainder of this section will be devoted to the proof of this theorem.
The first ingredient is the following classification of Felix Klein \cite{binaereFormenMathAnn9}.
\begin{Lem}[Klein] \label{finiteSubgroups}
The finite subgroups in $\mathrm{PSL}_2(\ringO)$
 are exclusively of isomorphism types the cyclic groups of orders one, two and three,
 the Klein four-group \mbox{$\Kleinfourgroup \cong \Z/2 \times \Z/2$},
 the dihedral group $\Sthree$ with six elements (non-commutative) and the alternating group~$\Afour$.
\end{Lem}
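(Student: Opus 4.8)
The plan is to derive Klein's classification from two classical inputs: the list of finite subgroups of $\mathrm{SO}(3)$, together with the arithmetic restriction on the orders of torsion elements that was already recorded above. The whole argument is then a cross-referencing of these two lists.

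First I would fix a finite subgroup $G$ of $\mathrm{PSL}_2(\ringO) \leq \PSL_2(\C)$ and use that $\PSL_2(\C)$ acts on hyperbolic three-space $\Hy$ as its full group of orientation-preserving isometries. Being finite, $G$ has a bounded orbit, and by convexity it fixes the circumcentre of that orbit, a point of $\Hy$; hence $G$ is conjugate in $\PSL_2(\C)$ into the stabiliser of that point, which is the maximal compact subgroup $\PSU(2) \cong \mathrm{SO}(3)$. By the classical classification of finite rotation groups, $G$ is therefore isomorphic to one of: a cyclic group $\Z/n$, a dihedral group $D_n$ of order $2n$, the tetrahedral group $\Afour$, the octahedral group $\Sfour$, or the icosahedral group $\Afive$.

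Next I would impose the order restriction. The trace argument recorded above shows that any finite-order matrix in $\mathrm{SL}_2(\ringO)$ has order in $\{1,2,3,4,6\}$, since its trace $\rho + \overline{\rho} = 2\cos(2\pi/n)$ must lie in $\ringO \cap \R = \Z$ (the real elements of $\rationals(\sqrt{-m})$ being rational, and the algebraic integers among them being exactly $\Z$). Passing to the quotient by the centre $\{\pm 1\}$, every torsion element of $\mathrm{PSL}_2(\ringO)$ then has order $1$, $2$ or $3$. Cross-referencing this with the five families above eliminates $\Sfour$ (which contains an element of order $4$) and $\Afive$ (order $5$); it forces the cyclic groups to be $\{1\}$, $\Z/2$ or $\Z/3$; and it forces the dihedral groups to have $n \leq 3$, giving $D_2 \cong \Kleinfourgroup$ and $D_3 \cong \Sthree$, with $D_1 \cong \Z/2$ already counted among the cyclic groups. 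The remaining exceptional group $\Afour$, all of whose elements have order $1$, $2$ or $3$, survives. Collecting the surviving isomorphism types yields exactly the list in the statement.

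The one delicate point is the bookkeeping across the central quotient $\mathrm{SL}_2(\ringO) \to \mathrm{PSL}_2(\ringO)$: I would check that the elements of matrix-order $4$ and $6$ do not produce forbidden orders downstairs but descend to the admissible orders $2$ and $3$ (their squares, respectively cubes, equalling $-\one$), while conversely no rotation of order $4$ or $5$ can appear in the image. Once this passage is verified the elimination is mechanical. Note that the lemma asserts only that these are the isomorphism types that \emph{can} occur; the question of which types are actually realised for a given $\ringO$ is separate and is governed by the counting of conjugacy classes used later in the paper.
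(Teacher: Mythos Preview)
Your argument is correct and is the standard modern route to this classification. The paper itself does not supply a proof of this lemma: it attributes the result to Klein and cites his 1875 paper \cite{binaereFormenMathAnn9}, using it as a known ingredient. So there is no ``paper's own proof'' to compare against beyond the citation.

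Your approach --- Cartan/Bruhat--Tits fixed point in $\Hy$, identification of a point stabiliser with $\mathrm{SO}(3)$, Klein's list of finite rotation groups, and then the trace bound on element orders already recorded in the paper --- is exactly the argument one would expect, and it is complete. The only cosmetic remark is that the circumcentre step is really the CAT(0) (or negatively curved) fixed-point theorem; you invoke it correctly. Your closing caveat, that the lemma bounds the possible isomorphism types without asserting realisation, is also apt and matches how the paper uses the result.
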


The proof of the following lemma from \cite{Rahm_homological_torsion} 
passes unchanged from $\mathrm{PSL}_2(\ringO)$ to any of its finite index subgroups $\Gamma$.

\begin{lemma} \label{geometricRigiditytheorem}
Let $v$ be a non-singular vertex in the refined cell complex.
 Then the number~$\bf n$ of orbits of edges adjacent to $v$ in the refined cellular complex $X$,
 with stabilizer in $\Gamma$ isomorphic to~$\Z/ \ell$,  is given as follows for $\ell = 2$ and $\ell = 3$.
$$ \begin{array}{c|cccccc}
{\rm Isomorphism}\medspace { \rm type}\medspace {\rm of } \medspace {\rm the  }\medspace {\rm vertex }\medspace {\rm stabiliser} & \{1\} & \Z/2 & \Z/3 & \Kleinfourgroup & \Sthree & \Afour \\
\hline &&&&&&  \\
{\bf n} \medspace \mathrm{ for } \medspace \ell = 2 & 0 & 2 & 0 & 3 & 2 & 1 \\ 
{\bf n} \medspace \mathrm{ for } \medspace \ell = 3 & 0 & 0 & 2 & 0 & 1 & 2.
\end{array} $$ \normalsize
\end{lemma}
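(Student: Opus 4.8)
The plan is to reduce the statement to a single orbit–stabiliser count carried out inside the finite vertex stabiliser $\Gamma_v$, and then to read off the table by enumerating, for each isomorphism type in Lemma~\ref{finiteSubgroups}, the subgroups of order $\ell$. Throughout I use the defining property of the refined cell complex $X$: every cell is fixed point-wise by its stabiliser. Since $v$ is non-singular, it is an interior point of $\Hy$ whose stabiliser $\Gamma_v$ is a finite rotation group of one of the types listed in Lemma~\ref{finiteSubgroups}; this is valid verbatim for a finite index subgroup $\Gamma$, since every finite subgroup of $\Gamma$ is a finite subgroup of the ambient Bianchi group. The orbits in question are those of $\Gamma_v$ acting on the edges incident to $v$, equivalently the edges of the quotient complex issuing from the class of $v$.

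First I would set up a bijection between the relevant edges and the order-$\ell$ subgroups. Let $e$ be an edge adjacent to $v$ with stabiliser isomorphic to $\Z/\ell$, generated by $\gamma$. Because $X$ is refined, $\gamma$ fixes $e$ point-wise, hence fixes the endpoint $v$, so $\langle\gamma\rangle$ is an order-$\ell$ subgroup of $\Gamma_v$; moreover $e$ lies on the rotation axis $\mathrm{Fix}(\gamma)$, a complete geodesic through $v$. Conversely each order-$\ell$ subgroup $\langle\gamma\rangle\subseteq\Gamma_v$ produces the geodesic $\mathrm{Fix}(\gamma)$ through $v$, and distinct subgroups produce distinct axes: two order-$\ell$ rotations sharing an axis both fix it point-wise, hence generate a group fixing a geodesic line, which for these rotation groups is cyclic, forcing the two subgroups to coincide. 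Since $v$ is interior, it is an interior point of each such geodesic, so exactly two edges of $X$ lie on the axis at $v$. Hence the set $E_v$ of $\Z/\ell$-edges adjacent to $v$ has cardinality $2s_\ell$, where $s_\ell$ denotes the number of subgroups of order $\ell$ in $\Gamma_v$.

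Next I would compute the orbits of $\Gamma_v$ on $E_v$. For $e\in E_v$ with stabiliser generator $\gamma$, any element fixing $e$ fixes it point-wise, hence fixes the whole geodesic $\mathrm{Fix}(\gamma)$ point-wise and therefore lies in the cyclic point-wise stabiliser of that axis, which is exactly $\langle\gamma\rangle$. Thus $\mathrm{Stab}_{\Gamma_v}(e)=\langle\gamma\rangle$ has order $\ell$ for every $e\in E_v$, so by the orbit–stabiliser theorem every $\Gamma_v$-orbit in $E_v$ has the same size $|\Gamma_v|/\ell$. Consequently
$$\mathbf{n}=\frac{|E_v|}{|\Gamma_v|/\ell}=\frac{2\ell\, s_\ell}{|\Gamma_v|}.$$
It then remains to substitute $s_\ell$ for each type: $s_\ell=0$ whenever $\Gamma_v$ has no element of order $\ell$ (the zero entries); $s_2=1$ for $\Z/2$ and $s_3=1$ for $\Z/3$; $s_2=3$ for $\Kleinfourgroup$; $s_2=3$ and $s_3=1$ for $\Sthree$; and $s_2=3$, $s_3=4$ for $\Afour$ (its three involutions and its four Sylow $3$--subgroups). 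Feeding these into the displayed formula reproduces every entry of the table.

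I expect the only genuinely delicate point to be the geometric input of the second paragraph: that a non-singular vertex lies in the interior of each rotation axis through it, so that each axis contributes exactly two edges, and that the point-wise stabiliser of a geodesic line is cyclic of order $\ell$, so that distinct order-$\ell$ subgroups yield distinct axes and no edge stabiliser exceeds $\Z/\ell$. Both facts follow from the elementary geometry of elliptic isometries of $\Hy$ together with the refinement hypothesis, but they are exactly what makes the clean counting formula valid; everything else is the bookkeeping of the orbit–stabiliser count and the elementary subgroup enumeration.
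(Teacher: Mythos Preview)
Your argument is correct. The key formula $\mathbf{n}=2\ell\,s_\ell/|\Gamma_v|$ follows cleanly from the two geometric facts you isolate (each order-$\ell$ subgroup of $\Gamma_v$ determines a distinct axis through~$v$ contributing exactly two edges, and each such edge has stabiliser in $\Gamma_v$ of order exactly~$\ell$), and substituting the subgroup counts reproduces the table.

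Your route differs from the paper's in presentation rather than in substance. The paper does not prove the lemma here: it cites a case-by-case verification from~\cite{Rahm_homological_torsion} and then remarks that one can ``alternatively'' proceed via the normalisers $N_{\Gamma_v}(\Z/\ell)$, for which it records the table immediately following the lemma. That normaliser approach amounts to counting, for each $\Gamma_v$-conjugacy class of order-$\ell$ subgroups, whether the normaliser swaps the two half-axes (contributing one edge orbit) or not (contributing two). Your orbit--stabiliser count packages the same information more uniformly: since every edge in $E_v$ has stabiliser of the same order~$\ell$, all $\Gamma_v$-orbits have the same size $|\Gamma_v|/\ell$, and one can simply divide. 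This buys you a one-line formula valid for all six stabiliser types at once, whereas the normaliser viewpoint requires reading off both the number of conjugacy classes and the normaliser type in each case. Conversely, the paper's normaliser table is what is actually used downstream (in the proof of Lemma~\ref{torsion axis} and in the construction of the conjugacy classes graph), so recording it explicitly has its own value.
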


Alternatively to the case-by-case proof of \cite{Rahm_homological_torsion}, we can proceed by investigating
the action of the associated normalizer groups. 
Straight-forward verification using the multiplication tables of the concerned finite groups yields the following.

Let $G$ be a finite subgroup of ${\rm PSL}_2(\ringO_{-m})$. Then the type of the normalizer of any subgroup of type~$\Z/ \ell$ in $G$ is given as follows for $\ell = 2$ and $\ell = 3$, where we print only cases with existing subgroup of type $\Z/\ell$.
$$ \begin{array}{c|cccccc}
{\rm Isomorphism}\medspace { \rm type}\medspace {\rm of } \medspace G & \{1\} & \Z/2 & \Z/3 & \Kleinfourgroup & \Sthree & \Afour \\
\hline &&&&&&  \\
\mathrm{ normaliser } \medspace \mathrm{ of } \medspace \Z/2 &  &
 \Z/2 &      & \Kleinfourgroup & \Z/2    & \Kleinfourgroup \\
\mathrm{ normaliser } \medspace \mathrm{ of } \medspace \Z/3 &  &
      & \Z/3 &                 & \Sthree & \Z/3.
\end{array} $$ \normalsize

The final ingredient in the proof of Theorem~\ref{isomorphy of graphs} is the following.

\begin{lemma} \label{conjugacy class correspondence}
 There is a natural bijection between conjugacy classes of subgroups of $\Gamma$ of order~$\ell$
 and edges of the quotient of the reduced $\ell$--torsion sub-complex.
It is given by considering the stabilizer of a representative edge in the refined cell complex.
\end{lemma}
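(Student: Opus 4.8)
The plan is to make the bijection explicit and then verify it with the reduction machinery of Section~\ref{conjugacy reduction} together with the local data of Lemma~\ref{geometricRigiditytheorem}. In one direction I send an edge $\bar e$ of the reduced $\ell$--torsion sub-complex to the conjugacy class of $\Gamma_e$, for $e$ any representative edge in the refined complex; this is legitimate once one knows that such an edge has stabiliser isomorphic to exactly $\Z/\ell$. That follows because the stabiliser of $e$ fixes $e$, hence the whole geodesic line carrying $e$, point-wise, so it consists of rotations about that line and is cyclic, and since $\Gamma$ lies in a Bianchi group with units $\{\pm1\}$ Lemma~\ref{finiteSubgroups} leaves only $\Z/2$ and $\Z/3$ as non-trivial cyclic possibilities; a $\Z/\ell$--element in the stabiliser then forces it to be $\Z/\ell$. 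As $\Gamma$--translation conjugates $\Gamma_e$, the class $[\Gamma_e]$ depends only on $\bar e$. In the other direction I send a conjugacy class $[C]$ of subgroups of order $\ell$ to the image $\overline{L_C}$ in the quotient of the rotation axis $L_C$ of $C$: every edge of $L_C$ has stabiliser containing, hence by the above equal to, $C$, so $L_C\subset X_{(\ell)}$, and conjugating $C$ translates $L_C$, so $\overline{L_C}$ is well defined.

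The heart of the argument is that the reduction contracts each $\overline{L_C}$ to a single surviving edge. The set-wise stabiliser of $L_C$ is $N_\Gamma(C)$, since a geodesic line is recovered from its point-wise rotation group, so $\overline{L_C}$, being a subgraph of the finite orbit space of $X_{(\ell)}$, is a segment or a circle; the ray-like shapes are moreover directly excluded because a ray of $L_C$ escaping into a cusp would put the finite group $C$ inside a cusp stabiliser, which is torsion-free in the units-$\{\pm1\}$ case. An interior vertex $v$ of $\overline{L_C}$ has $\Gamma_v\supseteq C$ with the same axis, hence $\Gamma_v\cong\Z/\ell$, so it is adjacent to exactly two $\ell$--torsion edges, which by the entry $\mathbf n=2$ for a stabiliser of type $\Z/\ell$ in Lemma~\ref{geometricRigiditytheorem} lie in two different orbits; thus Conditions~$\cellCondition$ and~$\IsomorphismConditionSymbol$ apply and $v$ is merged away, collapsing the segment (resp.\ circle) to a single edge (resp.\ a loop at one vertex, as the conventions of this section allow). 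The endpoints of a segment, when present, have stabiliser strictly containing $C$, hence of type $\Kleinfourgroup$, $\Sthree$ or $\Afour$ by Lemma~\ref{finiteSubgroups}, and for these the inclusion $\Z/\ell\hookrightarrow\Gamma_v$ is not an isomorphism on mod~$\ell$ homology (compare $\Homol_*(\Z/2;\F_2)$ with $\Homol_*(\Afour;\F_2)$, and $\Homol_*(\Z/3;\F_3)$ with $\Homol_*(\Sthree;\F_3)$ and $\Homol_*(\Afour;\F_3)$); the same computation shows that a terminal vertex of type $\Afour$ (for $\ell=2$) or $\Sthree$ (for $\ell=3$) fails Condition~$\IsomorphismConditionSymbol$, so no edge of $\overline{L_C}$ is cut off.

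Finally I would check the two maps are mutually inverse. A merge along a vertex $v$ identifies two edges whose stabilisers both lie in $\Gamma_v\cong\Z/\ell$ and hence both equal $\Gamma_v$; so along the whole contraction of $\overline{L_C}$ the stabiliser stays in a fixed conjugacy class, and the composite $[C]\mapsto\overline{L_C}\mapsto(\text{its reduced edge})\mapsto[\Gamma_e]$ returns $[C]$. Conversely, a reduced edge has a well-defined stabiliser conjugacy class $[C]$ by the same remark, it lies inside $\overline{L_C}$, and since axes from different conjugacy classes are never merged together it is the unique edge produced by $[C]$. The step I expect to be the genuine obstacle is the middle paragraph: controlling the orbit structure along $L_C$ tightly enough to be sure that the merging terminates at exactly one edge and that the vertices which survive are precisely the larger finite subgroups -- this forces one to combine Lemma~\ref{geometricRigiditytheorem}, the torsion-freeness of cusp stabilisers for units $\{\pm1\}$, and the explicit mod~$\ell$ homology of $\Kleinfourgroup$, $\Sthree$ and $\Afour$.
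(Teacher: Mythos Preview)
Your overall architecture is close to the paper's, but there is a concrete error in the middle paragraph that breaks the argument. You assert that an interior vertex $v$ of $\overline{L_C}$ has $\Gamma_v\cong\Z/\ell$, reasoning that $\Gamma_v$ must share the axis of~$C$. This is false: $\Gamma_v$ only has to fix the point~$v$, not the line $L_C$. Concretely, for $\ell=3$ a vertex on $L_C$ can have $\Gamma_v\cong\Afour$, and for $\ell=2$ it can have $\Gamma_v\cong\Sthree$. In both cases the normalizer of $C$ inside $\Gamma_v$ is $C$ itself (see the normalizer table after Lemma~\ref{geometricRigiditytheorem}), so nothing in $\Gamma_v$ swaps the two edge-germs of $L_C$ at~$v$; thus $\bar v$ is genuinely interior in $\overline{L_C}$ while $\Gamma_v$ is strictly larger than $\Z/\ell$. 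This also invalidates your companion claim that the inclusion $\Z/\ell\hookrightarrow\Gamma_v$ fails to be a mod~$\ell$ isomorphism whenever $\Gamma_v$ is one of $\Kleinfourgroup$, $\Sthree$, $\Afour$: in fact $\Z/2\hookrightarrow\Sthree$ is an isomorphism on mod~$2$ homology and $\Z/3\hookrightarrow\Afour$ is an isomorphism on mod~$3$ homology (the latter because $\Afour$ has the normal Klein four-group with trivial mod~$3$ homology).

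The paper's proof of Lemma~\ref{torsion axis} handles exactly this point: it does not try to pin down $\Gamma_v$ at interior vertices, but instead uses directly that $\Z/2\hookrightarrow\Sthree$ and $\Z/3\hookrightarrow\Afour$ induce mod~$\ell$ isomorphisms, so the merge along such vertices goes through regardless. The correct dichotomy is not ``$\Gamma_v\cong\Z/\ell$ versus bigger'', but rather ``the normalizer of $C$ in $\Gamma_v$ equals $C$'' (interior; merge applies, and one checks Condition~$\IsomorphismConditionSymbol$ case by case for $\Z/\ell$, $\Sthree$, $\Afour$) versus ``the normalizer strictly contains $C$'' (a reflection of $L_C$ is present, the two edge-germs of $L_C$ collapse to one, and $\bar v$ is an endpoint with $\Gamma_v\in\{\Kleinfourgroup,\Afour\}$ for $\ell=2$ or $\Gamma_v\cong\Sthree$ for $\ell=3$). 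With this correction your argument would go through and would essentially reproduce the paper's route via Lemma~\ref{torsion axis}.
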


In order to prove the latter lemma, we need another lemma, and we establish it now.

\begin{remark} \label{chain of edges}
 Any edge of the reduced torsion sub-complex is obtained by merging a chain of edges on the intersection of one geodesic line with
 some strict fundamental domain for~$\Gamma$ in $\Hy$.
\end{remark}
We call this chain the \emph{chain of edges associated to $\alpha$}.
It is well defined up to translation along the rotation axis of~$\alpha$.

\begin{lemma} \label{torsion axis}
Let $\alpha$ be any non-trivial torsion element in a finite index subgroup~$\Gamma$ in a Bianchi group.
Then the $\Gamma$--image of the chain of edges associated to $\alpha$ contains the rotation axis of $\alpha$.
\end{lemma}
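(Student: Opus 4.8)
The plan is to reduce the claim to a statement about the line-stabiliser acting on the axis, and then to propagate along the axis one edge at a time. Write $L:=\mathrm{Fix}(\alpha)$ for the rotation axis of $\alpha$; the refined complex $X$ subdivides $L$ into a bi-infinite chain of edges $\dots,e_{-1},e_{0},e_{1},\dots$ meeting at vertices $v_i:=e_i\cap e_{i+1}$, and by the construction underlying Remark~\ref{chain of edges} the chain $C$ associated to $\alpha$ is a connected sub-chain $e_a\cup\dots\cup e_b$ lying in $L\cap D$ for a strict fundamental domain $D$, whose orbit-wise merger is a single edge $\bar e$ of the reduced $\ell$-torsion sub-complex. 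I would first record two elementary reductions. A group element fixing an edge $e_i$ point-wise fixes two points of $L$, hence fixes the geodesic $L$ through them point-wise; so every $e_i$ has the same stabiliser, the point-wise stabiliser of $L$, which by Klein's classification (Lemma~\ref{finiteSubgroups}) is cyclic and, for $\ell\in\{2,3\}$, equals $\langle\alpha\rangle\cong\Z/\ell$. And since two distinct geodesics of $\Hy$ meet in at most one point, any element of $\Gamma$ carrying an edge of $L$ onto an edge of $L$ must preserve $L$; hence, writing $\Gamma_L:=\mathrm{Stab}_\Gamma(L)$, it suffices to prove that every edge $e_k$ of $L$ lies in $\Gamma_L C$, for then $L=\bigcup_k e_k\subseteq\Gamma_L C\subseteq\Gamma C$.

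The crux is to show that the two edges flanking $C$, namely $e_{a-1}$ and $e_{b+1}$, already lie in $\Gamma_L C$; I treat $e_{b+1}$, the case of $e_{a-1}$ being symmetric. Consider $v_b$ and the images $\bar e_b,\bar e_{b+1}$ in $\Gamma\backslash X$. If $\bar e_b=\bar e_{b+1}$, some $g\in\Gamma$ sends $e_b$ to $e_{b+1}$; as just noted $g\in\Gamma_L$, so $e_{b+1}=g\,e_b\in\Gamma_L C$. If $\bar e_b\neq\bar e_{b+1}$, I would show Conditions~$\cellCondition$ and~$\IsomorphismConditionSymbol$ hold for $v_b$ and the pair $e_b,e_{b+1}$, so that the reduction merges $\bar e_b$ and $\bar e_{b+1}$; since $e_b\in C$ this forces $\bar e_{b+1}$ to be one of the edges merged to form $\bar e$, hence $\bar e_{b+1}$ is the image of some edge $e_i$ of $C$, hence $e_{b+1}=\gamma\,e_i$ with $\gamma\in\Gamma_L$, so again $e_{b+1}\in\Gamma_L C$. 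To verify the two Conditions, note $\mathrm{Stab}(v_b)\supseteq\langle\alpha\rangle$, so by Lemma~\ref{finiteSubgroups} it is one of $\Z/\ell$, $\Kleinfourgroup$ (for $\ell=2$), $\Sthree$, $\Afour$; an element of $\mathrm{Stab}(v_b)$ normalising $\langle\alpha\rangle$ but not lying in it fixes $v_b$, preserves $L=\mathrm{Fix}\langle\alpha\rangle$ setwise, and acts non-trivially on $L$ (being outside the point-wise stabiliser $\langle\alpha\rangle$), hence acts as the reflection of $L$ at $v_b$ and sends $e_b$ to $e_{b+1}$ — which is excluded here, so by the normaliser table $\mathrm{Stab}(v_b)$ must be $\Z/\ell$, or $\Sthree$ with $\ell=2$, or $\Afour$ with $\ell=3$. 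For each of these Lemma~\ref{geometricRigiditytheorem} gives $\mathbf n=2$, so $\bar v_b$ lies in the boundary of exactly the two edges $\bar e_b,\bar e_{b+1}$ of $\Gamma\backslash X_{(\ell)}$, which represent different orbits with isomorphic ($\cong\Z/\ell$) stabilisers, and no higher-dimensional cell of $X_{(\ell)}$ touches $\bar v_b$ (those of $X$ being trivially stabilised) — so Condition~$\cellCondition$ holds; and $\Z/\ell\hookrightarrow\mathrm{Stab}(v_b)$ induces an isomorphism on mod-$\ell$ homology, trivially when $\mathrm{Stab}(v_b)=\Z/\ell$ and, for $\Sthree=\Z/3\rtimes\Z/2$ ($\ell=2$) and $\Afour=\Kleinfourgroup\rtimes\Z/3$ ($\ell=3$), because the normal subgroup has trivial mod-$\ell$ homology, so by Lemma~\ref{extension} the quotient projection onto $\Z/\ell$ is a mod-$\ell$ homology isomorphism whose composite with the inclusion is the identity — so Condition~$\IsomorphismConditionSymbol$ holds.

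To finish, set $T:=\Gamma_L C$, a $\Gamma_L$-invariant set of edges of $L$ containing the connected chain $e_{a-1},e_a,\dots,e_b,e_{b+1}$. If $T$ were not all of $L$, pick, say to the right, the least $m>b+1$ with $e_m\notin T$, so $e_{m-1}\in T$; write $e_{m-1}=\gamma\,e_j$ with $\gamma\in\Gamma_L$ and $a\leq j\leq b$. Then $\gamma^{-1}e_m$ is one of the two edges of $L$ adjacent to $e_j$, i.e. $e_{j-1}$ or $e_{j+1}$, and since $a-1\leq j\pm1\leq b+1$ it lies in $T$, whence $e_m=\gamma(\gamma^{-1}e_m)\in\Gamma_L T=T$, a contradiction; symmetrically to the left. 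Hence $T$ contains every edge of $L$, and $\Gamma C\supseteq\Gamma_L C=T\supseteq L$.

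I expect the only genuine difficulty to be the case analysis at $v_b$: pinning down which isomorphism types $\mathrm{Stab}(v_b)$ can take once $\bar e_b\neq\bar e_{b+1}$, and checking Condition~$\IsomorphismConditionSymbol$ at the $\Afour$-vertices ($\ell=3$) and $\Sthree$-vertices ($\ell=2$), where the required homology isomorphism is not visible on the nose and must be extracted from Lemma~\ref{extension}. The remainder is routine once one has the reduction ``it suffices that every edge of $L$ lie in $\Gamma_L C$'' and the description of $C$ as a lift of a single reduced edge.
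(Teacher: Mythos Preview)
Your proof is correct and takes a somewhat different, more uniform route than the paper's. The paper splits globally into two cases according to whether $\langle\alpha\rangle$ sits inside a dihedral subgroup $\Dl$ of~$\Gamma$: in the $\circlegraph$ case it argues that successive merges continue until neighbouring edges fall into one orbit, so a single orbit covers~$L$; in the $\edgegraph$ case it locates explicit reflections $\beta,\gamma\in\Gamma$ of~$L$ at the two endpoints of the reduced edge and observes that the words in $\beta,\gamma$ (an infinite dihedral action) tessellate~$L$ by images of the chain. You instead reduce once and for all to the set-wise stabiliser~$\Gamma_L$, handle each endpoint by a local dichotomy (either $\bar e_b=\bar e_{b+1}$, or Conditions~$\cellCondition$ and~$\IsomorphismConditionSymbol$ force $\bar e_{b+1}$ to merge into~$\bar e$ and hence to coincide with some~$\bar e_i$), and then propagate edge by edge. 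Your organisation avoids the global case split and makes the bootstrapping explicit; the paper's is more geometric in the $\edgegraph$ case, naming the two reflections that generate the covering, which is closer to what is used downstream when identifying the component types. The normaliser/homology verification you flag as the crux is exactly what the paper invokes too, just phrased as ``$\Z/2\hookrightarrow\Sthree$ and $\Z/3\hookrightarrow\Afour$ induce mod~$\ell$ isomorphisms'' without spelling out Condition~$\technicalCondition$.
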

\begin{proof}
 Because of the existence of a fundamental polyhedron for the action of $\Gamma$ on $\Hy$, 
 the rotation axis of $\alpha$ is cellularly subdivided into compact edges such that the union over the
 $\Gamma$--orbits of finitely many of them contains all of them.

The case \circlegraph . Assume that $\langle \alpha \rangle \cong \Z/\ell$ 
is not contained in any subgroup of $\Gamma$ of type $\Dl$.
Because the inclusion $\Z/2 \hookrightarrow \Sthree$, respectively $\Z/3 \hookrightarrow \Afour$, 
induces an isomorphism on mod $2$, respectively mod $3$, homology,
we can merge those edges orbit-wise until the neighbouring edges are on the same orbit.
So the reduced edge admits a $\Gamma$--image containing the rotation axis of $\alpha$.

The case \edgegraph . Make the complementary assumption that there is a subgroup of $\Gamma$ of type~$\Dl$,
containing $\langle \alpha \rangle \cong \Z/\ell$.
Then that subgroup contains a reflection $\beta$ of the rotation axis of $\alpha$ onto itself at a vertex $v$ stabilized by $\Dl$,
or by $\Afour \supset \Kleinfourgroup$.
Then by Lemma~\ref{geometricRigiditytheorem}, 
the $\Gamma$--orbits of the edges on the rotation axis of $\alpha$ cannot close into a loop \circlegraph .
So at the other end of the reduced edge $e$ originating at $v$, there must be another vertex of stabilizer $\Dl$,
respectively $\Afour \supset \Kleinfourgroup$, containing a second reflection $\gamma$ of the rotation axis of $\alpha$.
The latter reflection turns the axis as illustrated by the following images of~$e$ : \illustration .
The images of the reduced edge under the words in $\beta$ and $\gamma$ tessellate the whole rotation axis of $\alpha$.
\end{proof}

\begin{proof}[Proof of Lemma \emph{\ref{conjugacy class correspondence}}]
Consider a subgroup $\langle \alpha \rangle \cong \Z/\ell$ of $\Gamma$.
We need to study the effect of conjugating it by an element $\gamma \in \Gamma$.
Obviously, $\alpha$ and $\gamma \alpha  \gamma^{-1}$ stabilize edges on the same $\Gamma$--orbit.

One immediately checks that any fixed point $x \in \Hy$ of $\alpha$ induces the fixed point~$\gamma(x)$ of~$\gamma \alpha \gamma^{-1}$. As PSL$_2(\C)$ acts by isometries, the whole fixed point sets are identified.
Hence the fixed point set in $\Hy$ of $\alpha$ is identified by~$\gamma$ with the fixed point set of~$\gamma \alpha \gamma^{-1}$.
Therefore, we know that the line fixed by~$\alpha$ is sent by~$\gamma$ to the line fixed by~$\gamma \alpha  \gamma^{-1}$.

By Lemma \ref{torsion axis}, the union of the $\Gamma$--images of the chain associated to~$\alpha$
 contains the whole geodesic line fixed by $\alpha$.
As the $\Gamma$--action is cellular,
 any cell stabilized by~$\gamma \alpha  \gamma^{-1}$ admits a cell on its orbit stabilized by $\alpha$.
So it follows that precisely the edges stabilized by the elements of the conjugacy class of $\langle \alpha \rangle$
 pass to the reduced edge orbit obtained from the chain of edges associated to $\alpha$.
\end{proof}

\begin{proof} [Proof of Theorem \emph{\ref{isomorphy of graphs}}]
 Comparing with Lemma~\ref{geometricRigiditytheorem},
 we see that the vertex set of the $\ell$--conjugacy classes graph gives precisely the bifurcation points and vertices
 with only one adjacent edge of the orbit space of the $\ell$--torsion sub-complex.
When passing to the orbit space of the \emph{reduced} $\ell$--torsion sub-complex,
 we get rid of all vertices with two adjacent edges.
The disjoint circles~\circlegraph that we can obtain in the orbit space look like an exception,
but in fact there is just one adjacent edge, touching the vertex from both sides.
 By Lemma~\ref{conjugacy class correspondence},
 the edges of the $\ell$--conjugacy classes graph give the edges of the quotient of the reduced $\ell$--torsion sub-complex.
\end{proof}

\bigskip

\section{The Farrell cohomology of the Bianchi groups} \label{The Kraemer numbers and group homology}
In this section, we are going to prove Theorem~\ref{Grunewald-Poincare series formulae}.
In order to compare with Kr\"amer's formulae that we evaluate in the Appendix,
 we make use of his notations for the numbers of conjugacy classes
 of the five types of non-trivial finite subgroups in the Bianchi groups.
We apply this also to the conjugacy classes in the finite index subgroups in the Bianchi groups.
Kr\"amer's symbols for these numbers are printed in the first row of the below table,
 and the second row gives the symbol for the type of counted subgroup.
$$\begin{array}{|c|c|c|c|c|c|c|c|}
\hline &&&&&&& \\
   \mu_2           & \mu_T  & \mu_3   & \lambda_{2\ell} & \lambda_4^T                  & \lambda_4^*                                     & \lambda_6^*                   & \mu_2^- \\
\hline &&&&&&& \\
   \Kleinfourgroup & \Afour & \Sthree & \Z/\ell         & \Z/2 \subset \Afour  & \Z/2 \subset \Kleinfourgroup & \Z/3 \subset \Sthree  & \Kleinfourgroup \nsubseteq \Afour \\
\hline
  \end{array}$$
Here, the inclusion signs ``$\subset$'' mean that we only consider copies of $\Z/\ell$ admitting the specified inclusion in the given Bianchi group and $\Kleinfourgroup \nsubseteq \Afour$ means that we only consider copies of $\Kleinfourgroup$ not admitting any inclusion into a subgroup of type $\Afour$ of the Bianchi group.

Note that the number $\mu_2^-$ is simply the difference $\mu_2 -\mu_T$, because every copy of $\Afour$ admits precisely one normal subgroup of type~$\Kleinfourgroup$.
Also, note the following graph-theoretical properties of the quotient of the reduced torsion subcomplex, the latter of which we obtain by restricting our attention to the connected components not homeomorphic to~$\circlegraph$.

\begin{corollary}[Corollary to Lemma~\ref{geometricRigiditytheorem}] \label{graph-theoretic formulae}
For all finite index subgroups in Bianchi groups with units $\{\pm 1\}$, the numbers of conjugacy classes of finite subgroups  satisfy
 $\lambda_4^T \leq \mu_T$ and $2\lambda_6^* = \mu_3$, and even
$$2\lambda_4^* = \mu_T +3 \mu_2^- .$$
\end{corollary}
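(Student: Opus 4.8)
The plan is to extract all three relations from the combinatorics of the quotient $_\Gamma \backslash \widetilde{X_{(\ell)}}$ of the reduced $\ell$--torsion sub-complex --- for $\ell=3$ in the case of $2\lambda_6^*=\mu_3$, and for $\ell=2$ in the cases of $\lambda_4^T\leq\mu_T$ and $2\lambda_4^*=\mu_T+3\mu_2^-$ --- by combining Lemma~\ref{geometricRigiditytheorem}, Lemma~\ref{conjugacy class correspondence} and Lemma~\ref{torsion axis}. I discard the connected components homeomorphic to~$\circlegraph$ and write $Y$ for the union of the remaining components. By Lemma~\ref{conjugacy class correspondence} the edges of $_\Gamma \backslash \widetilde{X_{(\ell)}}$ are in bijection with the conjugacy classes of subgroups of order $\ell$ in~$\Gamma$; by the $\edgegraph$--case of Lemma~\ref{torsion axis} those lying in $Y$ are precisely the ones whose subgroup of order $\ell$ is contained in a subgroup of type $\Dl$, that is, in an $\Sthree$ when $\ell=3$ and in a $\Kleinfourgroup$ when $\ell=2$; hence $Y$ has $\lambda_6^*$, respectively $\lambda_4^*$, edges, and no component of $Y$ contains a loop. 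The vertices of $Y$ are the non-singular vertices $v$ for which the number $\mathbf{n}$ of adjacent edge-orbits of Lemma~\ref{geometricRigiditytheorem} differs from $2$: at a vertex with $\mathbf{n}=2$ (stabiliser of type $\Z/3$ or $\Afour$ for $\ell=3$, of type $\Z/2$ or $\Sthree$ for $\ell=2$) the inclusion of $\Z/\ell$ into the stabiliser induces an isomorphism on mod~$\ell$ homology, so its two adjacent edges are merged during the reduction; for the surviving stabiliser types ($\Sthree$ when $\ell=3$; $\Kleinfourgroup$ and $\Afour$ when $\ell=2$) this inclusion is not such an isomorphism, so no further merging or cutting takes place. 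Thus for $\ell=3$ every vertex of $Y$ has degree $1$, while for $\ell=2$ the vertices of type $\Afour$ have degree $1$ and those of type $\Kleinfourgroup$ degree $3$.

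The crux is to count these surviving vertices in terms of Kr\"amer's numbers. A non-singular vertex is stabilised by a finite subgroup $G$, and by Lemma~\ref{finiteSubgroups} together with the observation that the fixed-point set in~$\Hy$ of each of the five non-trivial finite subgroups is a single point (for $\Kleinfourgroup$ it is the intersection of the three pairwise distinct rotation axes of its involutions, and every over-group fixes a subset of it), such a vertex is the \emph{unique} point fixed by $G$. Consequently the $\Gamma$--orbits of vertices with stabiliser $\Afour$ biject with the conjugacy classes of $\Afour$ (there are $\mu_T$ of them), those with stabiliser $\Sthree$ with the conjugacy classes of $\Sthree$ (there are $\mu_3$), and those with stabiliser $\Kleinfourgroup$ with the conjugacy classes of copies of $\Kleinfourgroup$ lying in no $\Afour$: if $K\cong\Kleinfourgroup$ lies in some $\Afour$, the point fixed by $K$ is also fixed by that $\Afour$ and so has stabiliser $\Afour$ rather than $\Kleinfourgroup$, whereas if $K$ lies in no $\Afour$ then, $\Afour$ being the only finite subgroup of a Bianchi group that properly contains a $\Kleinfourgroup$, the stabiliser of its fixed point is exactly $K$. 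The number of such classes is $\mu_2-\mu_T=\mu_2^-$.

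Now the three relations drop out. For $\ell=3$, every component of $Y$ is a single edge with two $\Sthree$--vertices as endpoints (the stabiliser of such an endpoint contains the $\Sthree$ generated by the order-$3$ element and an involution reversing its axis, and no finite subgroup of a Bianchi group properly contains an $\Sthree$); so the number of components equals $\lambda_6^*$ (the number of edges) and also $\frac{1}{2}\mu_3$ (half the number of $\Sthree$--vertices), giving $2\lambda_6^*=\mu_3$. For $\ell=2$, the handshake identity, equating the sum of the vertex degrees of $Y$ with twice its number of edges, reads $3\mu_2^-+\mu_T=2\lambda_4^*$, since $Y$ has $\mu_2^-$ vertices of degree $3$, $\mu_T$ of degree $1$, $\lambda_4^*$ edges, and no loops. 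Finally, the three subgroups of order $2$ of an $\Afour$ are conjugate within that $\Afour$, so each conjugacy class of $\Afour$ determines a single conjugacy class of $\Z/2$ contained in it, and every class of $\Z/2$ that embeds into an $\Afour$ arises this way; the resulting surjection from the $\mu_T$ classes of $\Afour$ onto the classes counted by $\lambda_4^T$ yields $\lambda_4^T\leq\mu_T$.

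I expect the main obstacle to be the Klein four-group bookkeeping of the second paragraph: having $\mu_2^-$ rather than $\mu_2$ multiply the exponent $3$ depends on the point that a $\Kleinfourgroup$ sitting inside an $\Afour$ contributes no vertex of its own, its unique fixed point being stabilised by the whole $\Afour$; and this rests on the rigidity of the action (Lemma~\ref{geometricRigiditytheorem}) together with Klein's classification of the finite subgroups. The degree computations, the exclusion of loops and of any further reduction at the surviving vertices, and the two elementary counts (the handshake identity and the surjection onto the $\lambda_4^T$--classes) should then be routine.
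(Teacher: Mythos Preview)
Your argument is essentially the one the paper intends --- the Corollary is stated without proof precisely because it is the obvious edge--vertex incidence count drawn from Lemma~\ref{geometricRigiditytheorem}, together with the bijection of Lemma~\ref{conjugacy class correspondence}. The $3$--torsion case and the inequality $\lambda_4^T\le\mu_T$ are clean.

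There is one inaccuracy in the $2$--torsion part: your assertion that ``no component of $Y$ contains a loop'' is false. The component type $\graphTwo$ (which does occur for Bianchi groups, cf.\ Table~\ref{Restricted summands} and the surrounding discussion) has a genuine loop at its $\Kleinfourgroup$--vertex; the $\edgegraph$--case of Lemma~\ref{torsion axis} only rules out the reduced edge forming a \emph{disjoint} circle $\circlegraph$, not a loop based at a vertex that also meets other edges. Fortunately this does not damage the handshake count: what Lemma~\ref{geometricRigiditytheorem} gives you at a vertex $[v]$ is the number of $\Gamma_v$--orbits of edge-germs at $v$, which is exactly the number of edge-ends at $[v]$ in the quotient (a loop contributing two distinct ends, since edge stabilisers fix their edges pointwise and hence cannot swap endpoints). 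So the degree of a $\Kleinfourgroup$--vertex is $3$ and of an $\Afour$--vertex is $1$ with the standard loop convention, and the identity $3\mu_2^-+\mu_T=2\lambda_4^*$ follows from handshake regardless of whether loops are present. Simply drop the ``no loops'' claim and make the degree convention explicit.
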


The values given by Kr\"amer's formulae are matching with the values computed with~\cite{BianchiGP}.

\begin{observation} \label{Kraemer numbers determine homological 3-torsion}
The numbers of conjugacy classes of finite subgroups determine the 3-conjugacy classes graph and hence the quotient of the reduced 
$3$--torsion sub-complex for all finite index subgroups in Bianchi groups with units $\{\pm 1\}$,
 as we can see immediately from Theorem~\ref{isomorphy of graphs} and the description of the reduced
 $3$--torsion sub-complex in \cite{Rahm_homological_torsion}.
\end{observation}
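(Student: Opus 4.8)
The plan is to turn the statement into a counting problem using Theorem~\ref{isomorphy of graphs}, which already identifies the $3$--conjugacy classes graph with the quotient of the reduced $3$--torsion sub-complex. Hence it suffices to show that the isomorphism type of this quotient graph depends on $\Gamma$ only through the numbers of conjugacy classes of finite subgroups. The input is the description of the reduced $3$--torsion sub-complex recorded in~\cite{Rahm_homological_torsion}, which can also be recovered here from Klein's classification (Lemma~\ref{finiteSubgroups}) together with the incidence count of Lemma~\ref{geometricRigiditytheorem}: the only finite subgroups containing an element of order~$3$ are those of type $\Z/3$, $\Sthree$ and $\Afour$, with $\mathbf{n}=2,1,2$ respectively in the $3$--torsion sub-complex.

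First I would determine which vertices survive the reduction. A vertex of type $\Z/3$ or $\Afour$ has exactly two incident edges of the $3$--torsion sub-complex (both with stabiliser $\Z/3$); since $\Z/3\hookrightarrow\Z/3$ is trivially a mod~$3$ homology isomorphism, while $\Z/3\hookrightarrow\Afour$ is one as well --- apply Lemma~\ref{extension} to $1\to\Kleinfourgroup\to\Afour\to\Z/3\to1$ --- Condition~$\IsomorphismConditionSymbol$ lets us merge these two edges, so the vertex disappears. A vertex of type $\Sthree$ has a single incident edge, and it cannot be cut off, because $\Z/3\hookrightarrow\Sthree$ is \emph{not} a mod~$3$ homology isomorphism ($\Homol_1(\Sthree;\Z/3)=0\neq\Homol_1(\Z/3;\Z/3)$). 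Hence the vertices of the reduced complex are exactly the conjugacy classes of subgroups of type $\Sthree$, each of degree~$1$, while by Lemma~\ref{conjugacy class correspondence} the edges are exactly the conjugacy classes of subgroups of type $\Z/3$.

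Then I would read off the graph. Because every surviving vertex has degree~$1$ and no higher--dimensional cells touch the sub-complex, each connected component is either a single edge joining two vertices of type $\Sthree$ (the shape $\edgegraph$), or a loop carrying no vertex of type $\Sthree$ (the shape $\circlegraph$); this is precisely the dichotomy of Lemma~\ref{torsion axis}, where the $\edgegraph$ case corresponds to a copy of $\Z/3$ lying inside a subgroup of type $\Sthree$ and the $\circlegraph$ case to one that does not. Counting components: each $\edgegraph$--component carries two of the $\mu_3$ vertices, so there are $\mu_3/2=\lambda_6^*$ of them by Corollary~\ref{graph-theoretic formulae}; and since there are $\lambda_6$ edges altogether, the number of $\circlegraph$--components is $\lambda_6-\mu_3/2$. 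Thus the quotient of the reduced $3$--torsion sub-complex is $\tfrac{\mu_3}{2}$ disjoint copies of $\edgegraph$ together with $\lambda_6-\tfrac{\mu_3}{2}$ disjoint copies of $\circlegraph$, which depends on $\Gamma$ only through $\mu_3$ and $\lambda_6$; feeding this back through Theorem~\ref{isomorphy of graphs} yields the claim for the $3$--conjugacy classes graph.

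The step I expect to be the main obstacle is the structural dichotomy --- proving that no component is more complicated than $\edgegraph$ or $\circlegraph$, i.e. that no vertex of degree $\ge 2$ survives and that the two ends of an $\edgegraph$ lie in distinct conjugacy classes of $\Sthree$ --- which is exactly where one leans on the analysis of~\cite{Rahm_homological_torsion}. The accompanying subtlety is the $\Afour$--case: a copy of $\Z/3$ contained in an $\Afour$ but in no $\Sthree$ still produces a $\circlegraph$--component (its rotation axis meets only vertices of type $\Z/3$ and $\Afour$, all of which are merged away), so it must be counted in $\lambda_6-\mu_3/2$ and not mistaken for a configuration attached to an $\Sthree$--vertex.
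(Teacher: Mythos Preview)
Your proposal is correct and follows the same route the paper indicates: it invokes Theorem~\ref{isomorphy of graphs} and then unpacks the structure of the reduced $3$--torsion sub-complex that the paper simply cites from~\cite{Rahm_homological_torsion}. In the paper the Observation is not given a separate proof at all; the content is the single sentence pointing to Theorem~\ref{isomorphy of graphs} and the external reference, and the actual counting surfaces only later in the proof of Theorem~\ref{Grunewald-Poincare series formulae} via Corollary~\ref{graph-theoretic formulae}. Your write-up reconstructs that outsourced argument from the tools present in this paper (Lemmas~\ref{finiteSubgroups}, \ref{geometricRigiditytheorem}, \ref{conjugacy class correspondence}, \ref{torsion axis} and the check of Condition~$\IsomorphismConditionSymbol$ for $\Z/3\hookrightarrow\Afour$ and its failure for $\Z/3\hookrightarrow\Sthree$), arriving at exactly the decomposition $\tfrac{\mu_3}{2}\,\edgegraph\ \sqcup\ (\lambda_6-\tfrac{\mu_3}{2})\,\circlegraph$ that underlies Theorem~\ref{Grunewald-Poincare series formulae}. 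The one point you rightly flag as delicate --- that the two $\Sthree$--endpoints of an $\edgegraph$--component are non-conjugate --- is precisely the identity $2\lambda_6^*=\mu_3$ of Corollary~\ref{graph-theoretic formulae}, so you may cite it there rather than leave it as a worry.
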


For the $2$--torsion part of the proof of Theorem~\ref{Grunewald-Poincare series formulae}, we still need the following supplementary ingredients.

\begin{remark} \label{directSumDecomposition}
In the equivariant spectral spectral sequence converging to the Farrell cohomology of a given finite index subgroup $\Gamma$ in $\PSLO$,
the restriction of the differential to maps between cohomology groups of cells that are not adjacent in the orbit space,
 are zero. 
So, the $\ell$--primary part of the degree--$1$--differentials of this sequence can be decomposed
 as a direct sum of the blocks associated to the connected components of the quotient of the $\ell$--torsion sub-complex
(Compare with sub-lemma 45 of~\cite{Rahm_homological_torsion}).
\end{remark}

\begin{Lem}[Schwermer/Vogtmann] \label{inducedMaps}
Let $M$ be $\Z$ or $\Z/2$. Consider group homology with trivial $M$--coefficients. Then the following holds.
\begin{itemize}
\item Any inclusion $\Z/2 \to \Sthree$ induces an injection on homology.
\item An inclusion $\Z/3 \to \Sthree$ induces an injection on homology in degrees congruent to $3$ or $0 \mod 4$, and is otherwise zero.
\item Any inclusion $\Z/2 \to \Kleinfourgroup$ induces an injection on homology in all degrees.
\item An inclusion $\Z/3 \to \Afour$ induces injections on homology in all degrees.
\item An inclusion $\Z/2 \to \Afour$ induces injections on homology in degrees greater than~$1$, and is zero on~$\Homol_1$.
\end{itemize}
\end{Lem}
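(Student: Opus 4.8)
The plan is to reduce the statement to a direct, group‑by‑group computation with the classical (co)homology of $\Z/2$, $\Z/3$, $\Kleinfourgroup\cong\Z/2\times\Z/2$, $\Sthree\cong\Dthree$ and $\Afour$. For the cyclic groups I use the periodic resolutions; for $\Kleinfourgroup$ the K\"unneth formula; for $\Sthree$ the description of $\Homol_*(\mathcal{D}_n;\Z/\ell)$ via Wall's lemma recalled above (applied to $1\to\Z/3\to\Sthree\to\Z/2\to1$); and for $\Afour$ the facts that its Sylow $3$--subgroup $\Z/3$ is self-normalizing and that its Sylow $2$--subgroup is the \emph{normal} subgroup $\Kleinfourgroup$ with quotient $\Afour/\Kleinfourgroup\cong\Z/3$. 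Two tools will suffice: the double coset (Mackey) formula for the composite $\Homol_*(H;M)\xrightarrow{\iota_*}\Homol_*(G;M)\xrightarrow{\mathrm{tr}}\Homol_*(H;M)$, and, for the single inclusion the transfer cannot detect, an explicit invariant computation inside $\Homol^*(\Kleinfourgroup;\mathbb{F}_2)$. For the four inclusions with source of order $2$ I will first settle the $\mathbb{F}_2$--coefficient case and then deduce the $\Z$--coefficient case: in positive even degrees $\Homol_q(\Z/2;\Z)=0$ so there is nothing to prove, in degree $0$ the induced map is the identity, and in positive odd degrees the reduction $\Homol_q(\Z/2;\Z)=\Z/2\hookrightarrow\Homol_q(\Z/2;\mathbb{F}_2)$ is injective, so the $\mathbb{F}_2$--statement forces the $\Z$--statement by naturality. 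For the two inclusions with source $\Z/3$ the $\mathbb{F}_2$--statement is vacuous in positive degrees, and I argue directly with $\Z$--coefficients (equivalently, on the $3$--primary part).

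Next I would dispatch the easy Sylow cases. The inclusion $\Z/2\hookrightarrow\Kleinfourgroup$ of a direct factor is split by a projection, so $\iota_*$ is a split injection in every degree. For $\Z/2\hookrightarrow\Sthree$ and $\Z/3\hookrightarrow\Afour$ the source is a self-normalizing Sylow subgroup; the Mackey formula for $\mathrm{tr}\circ\iota_*$ then has exactly one contributing double coset, the trivial one, giving $\mathrm{tr}\circ\iota_*=\mathrm{id}$ (every other double coset factors through the trivial subgroup, whose positive homology vanishes), hence $\iota_*$ is a split injection in all degrees. For $\Z/3\hookrightarrow\Sthree$, which is normal of index $2$, the Mackey formula gives $\mathrm{tr}\circ\iota_*=\mathrm{id}+(c_s)_*$, where $s$ is a transposition acting on $\Z/3$ by inversion; on $\Homol_n(\Z/3;\Z)\cong\Z/3$ (for $n$ odd) the inversion automorphism acts by $+1$ when $n\equiv3\bmod4$ and by $-1$ when $n\equiv1\bmod4$, so $\mathrm{tr}\circ\iota_*$ is multiplication by $2$ (an isomorphism) in the first case and is $0$ in the second. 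This proves injectivity for $n\equiv3\bmod4$; for $n\equiv1\bmod4$ I upgrade ``composite is zero'' to ``$\iota_*$ is zero'' by noting that $\Homol_n(\Z/3;\Z)$ is $3$--torsion while the $3$--primary part of $\Homol_n(\Sthree;\Z)$ vanishes there (Wall's lemma, as above). The remaining degrees are immediate, since $\Homol_q(\Z/3;\Z)=0$ for $q$ even $>0$ and the map is the identity in degree $0$.

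The genuinely delicate case is $\Z/2\hookrightarrow\Afour$, where the transfer gives no information: since $\Z/2$ is not Sylow, the Mackey formula yields $\mathrm{tr}\circ\iota_*=2\cdot\mathrm{id}$, which vanishes mod~$2$. I would instead pass to cohomology and dualize. Factor $\Z/2\hookrightarrow\Kleinfourgroup\hookrightarrow\Afour$; since $3$ is invertible mod~$2$ and $\Kleinfourgroup$ is normal in $\Afour$, the Lyndon--Hochschild--Serre spectral sequence of $1\to\Kleinfourgroup\to\Afour\to\Z/3\to1$ collapses and identifies $\Homol^*(\Afour;\mathbb{F}_2)$ with the ring of invariants of $\Homol^*(\Kleinfourgroup;\mathbb{F}_2)=\mathbb{F}_2[a,b]$ under $\Afour/\Kleinfourgroup\cong\Z/3$, a generator $\rho$ of which cyclically permutes the three non-zero classes $a,b,a+b$ in $\Homol^1(\Kleinfourgroup;\mathbb{F}_2)$. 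I would then exhibit two invariants, $Q:=a^2+ab+b^2$ in degree $2$ and $W:=a^3+a^2b+b^3$ in degree $3$ (the $\rho$--invariance is a one-line check). The restriction $\Homol^*(\Kleinfourgroup;\mathbb{F}_2)\to\Homol^*(\Z/2;\mathbb{F}_2)=\mathbb{F}_2[t]$ along any of the three involutions of $\Kleinfourgroup$ sends two of $a,b,a+b$ to $t$ and the third to $0$; feeding the three possibilities into $Q$ and $W$ yields $t^2$ and $t^3$ in every case. Hence $Q$ and $W$ lift to classes of $\Homol^*(\Afour;\mathbb{F}_2)$ restricting to $t^2$ and $t^3$, so the monomials $Q^i$ and $Q^iW$ exhibit $t^n$ in the image for every $n\ge2$; thus $\iota^*\colon\Homol^n(\Afour;\mathbb{F}_2)\to\Homol^n(\Z/2;\mathbb{F}_2)$ is surjective for $n\ge2$ and zero for $n=1$ (because $\Homol^1(\Afour;\mathbb{F}_2)=\mathrm{Hom}(\Afour,\mathbb{F}_2)=0$, as $\Afour^{\mathrm{ab}}\cong\Z/3$). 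Dualizing over $\mathbb{F}_2$ gives that $\iota_*$ is injective for $n\ge2$ and zero on $\Homol_1$, and the passage to $\Z$--coefficients is as in the first paragraph (directly on $\Homol_1$: the map $\Z/2\to\Afour^{\mathrm{ab}}=\Z/3$ is zero).

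I expect the invariant computation for $\Z/2\hookrightarrow\Afour$ to be the only real obstacle; the rest is bookkeeping with the double coset formula and the classical homology of $\Z/2$, $\Z/3$, $\Kleinfourgroup$ and $\Sthree$. It is worth noting that this inclusion is precisely the one at which the naive Sylow/transfer argument breaks down, which presumably explains why Schwermer and Vogtmann isolated these maps; one could also simply quote their computation.
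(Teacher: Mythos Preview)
Your argument is correct. Note, however, that the paper does not give its own proof of this lemma: immediately after the statement it simply writes ``For the proof in $\Z$--coefficients, see \cite{SchwermerVogtmann}, for $\Z/2$--coefficients see \cite{Rahm_homological_torsion}.'' So there is no in-paper argument to compare against.

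What you have supplied is a clean self-contained proof along the standard lines. The Mackey/transfer treatment of the three split or self-normalizing-Sylow inclusions ($\Z/2\hookrightarrow\Kleinfourgroup$, $\Z/2\hookrightarrow\Sthree$, $\Z/3\hookrightarrow\Afour$) is correct; for $\Z/3\hookrightarrow\Sthree$ your sign computation (inversion acts on $\Homol_{2k-1}(\Z/3;\Z)$ by $(-1)^k$, so $\mathrm{tr}\circ\iota_*=1+(-1)^k$) is right, and the upgrade from ``composite zero'' to ``$\iota_*$ zero'' in degrees $\equiv 1\bmod 4$ via the vanishing of the $3$--primary part of $\Homol_*(\Sthree;\Z)$ there is exactly what is needed. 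For the one case the transfer cannot detect, $\Z/2\hookrightarrow\Afour$, your passage to the invariant ring $\F_2[a,b]^{\Z/3}$ and the explicit invariants $Q=a^2+ab+b^2$ and $W=a^3+a^2b+b^3$ restricting to $t^2,t^3$ under each of the three projections is correct (the $\rho$--invariance and the restriction checks all go through in characteristic~$2$ as you indicate), and the dualization plus coefficient-reduction argument to pass from $\F_2$ to $\Z$ is valid since $\Homol_q(\Z/2;\Z)\to\Homol_q(\Z/2;\F_2)$ is injective in every degree. Your observation that $\Z/2\hookrightarrow\Afour$ is precisely where $\mathrm{tr}\circ\iota_*=2\cdot\mathrm{id}=0$ is also accurate and explains why this case needs separate treatment.
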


For the proof in $\Z$--coefficients, see \cite{SchwermerVogtmann}, for $\Z/2$--coefficients see \cite{Rahm_homological_torsion}.

\begin{Lem}[\cite{Rahm_homological_torsion}, lemma 32] \label{D2blocks}
Let $q \geq 3$ be an odd integer number. Let $v$ be a vertex representative of stabilizer type $\Kleinfourgroup$ in the refined cellular complex for the Bianchi groups. Then the three images in $\left(\Homol_q(\Kleinfourgroup;\Z) \right)_{(2)}$ induced by the inclusions of the stabilizers of the edges adjacent to $v$, are linearly independent.
\end{Lem}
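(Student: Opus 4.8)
The plan is to strip the statement down to a computation in mod~$2$ homology and then to read it off from the polynomial ring $\Homol^*(\Kleinfourgroup;\F_2)$. Throughout, write $\Kleinfourgroup=\langle a,b\rangle$ and let $\langle a\rangle$, $\langle b\rangle$, $\langle ab\rangle$ be its three subgroups of order two.

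First I would identify the edge stabilizers. Since the $\Gamma$--action on the refined cell complex is cellular with each cell fixed pointwise by its stabilizer, the stabilizer of any edge adjacent to $v$ is a subgroup of $\Gamma_v\cong\Kleinfourgroup$; by Lemma~\ref{geometricRigiditytheorem} there are exactly three orbits of such edges carrying a stabilizer of type $\Z/2$. As the three order--two subgroups of $\Kleinfourgroup$ are pairwise distinct and, acting on $\Hy$, have three pairwise distinct rotation axes (in $\mathrm{SO}(3)$ a Klein four-group is generated by three half-turns about mutually perpendicular axes), these three orbits realize the three distinct inclusions $\Z/2\hookrightarrow\Kleinfourgroup$. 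Hence the lemma is equivalent to the purely group-theoretic claim that the images of a generator of $\Homol_q(\Z/2;\Z)\cong\Z/2$ under the three maps $\Homol_q(\langle a\rangle;\Z)_{(2)},\ \Homol_q(\langle b\rangle;\Z)_{(2)},\ \Homol_q(\langle ab\rangle;\Z)_{(2)}\to\Homol_q(\Kleinfourgroup;\Z)_{(2)}$ are linearly independent over $\F_2$.

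Next I would reduce to $\F_2$--coefficients. Because $\Homol_*(\Z/2;\Z)$ is $\Z$ in degree $0$, is $\Z/2$ in every odd degree, and vanishes otherwise, the K\"unneth formula shows that $\Homol_q(\Kleinfourgroup;\Z)$ is an elementary abelian $2$--group for every $q\geq 1$ (no $\Z$--summand survives in positive total degree, and every Tor--term is $\Z/2$ or $0$); in particular it agrees with its $2$--primary part, and the universal coefficient sequence makes the coefficient map $\Homol_q(\Kleinfourgroup;\Z)\hookrightarrow\Homol_q(\Kleinfourgroup;\F_2)$ injective. For $q$ odd the corresponding map $\Homol_q(\Z/2;\Z)\to\Homol_q(\Z/2;\F_2)$ is an isomorphism. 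By naturality of the coefficient maps with respect to the three inclusions, it now suffices to prove that the images of the generator of $\Homol_q(\Z/2;\F_2)$ under $\Homol_q(\langle a\rangle;\F_2),\ \Homol_q(\langle b\rangle;\F_2),\ \Homol_q(\langle ab\rangle;\F_2)\to\Homol_q(\Kleinfourgroup;\F_2)$ are linearly independent.

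Finally I would carry out this $\F_2$--computation by dualising to cohomology. Write $\Homol^*(\Kleinfourgroup;\F_2)=\F_2[x,y]$ with $x,y$ of degree $1$ dual to $a,b$, and let $\xi_0,\dots,\xi_q$ be the basis of $\Homol_q(\Kleinfourgroup;\F_2)$ dual to the monomial basis $x^q,x^{q-1}y,\dots,y^q$. The three restriction maps in $\F_2$--cohomology send $x\mapsto x_a,\ y\mapsto 0$ on $\langle a\rangle$; $x\mapsto 0,\ y\mapsto y_b$ on $\langle b\rangle$; and $x\mapsto t,\ y\mapsto t$ on $\langle ab\rangle$ (where $x_a,y_b,t$ denote the respective degree--one generators, using that $x$ and $y$ both restrict nontrivially to $\langle ab\rangle$). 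Dualising, the three images of the generator of $\Homol_q(\Z/2;\F_2)$ are $\xi_0$, $\xi_q$, and $\xi_0+\xi_1+\cdots+\xi_q$. For $q\geq 2$, hence in particular for odd $q\geq 3$, the ``middle'' part $\xi_1+\cdots+\xi_{q-1}$ is a nonzero sum of basis vectors distinct from $\xi_0$ and $\xi_q$, so an $\F_2$--relation $\alpha\xi_0+\beta\xi_q+\gamma(\xi_0+\cdots+\xi_q)=0$ forces $\gamma=0$ and then $\alpha=\beta=0$. This gives the asserted linear independence. I expect the step needing the most care to be the reduction to $\F_2$: one must be sure that $\Homol_q(\Kleinfourgroup;\Z)$ carries no further torsion, so that the integral classes are faithfully detected mod~$2$ and the $\F_2$--computation loses no information (for $q=1$, by contrast, the three classes above do become dependent, consistent with $\dim_{\F_2}\Homol_1(\Kleinfourgroup;\Z)_{(2)}=2$).
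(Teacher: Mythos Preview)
Your argument is correct. The reduction to a purely algebraic statement about the three inclusions $\Z/2\hookrightarrow\Kleinfourgroup$ is exactly right: each nontrivial element of $\Kleinfourgroup\subset\PSL_2(\C)$ fixes its own geodesic through~$v$, and distinct order--$2$ elements cannot share an axis (the pointwise stabilizer of a geodesic is cyclic), so the three edge orbits indeed realise the three distinct subgroups. Your passage to $\F_2$ via K\"unneth and the universal coefficient map is clean, and the dual computation with the monomial basis of $\F_2[x,y]$ is carried out correctly; the observation that independence fails for $q=1$ is a nice sanity check.

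As for comparison: the paper does not prove this lemma here but imports it from \cite{Rahm_homological_torsion}, Lemma~32. The argument there proceeds in essentially the same spirit---identifying the three edge stabilizers with the three copies of $\Z/2$ and then computing the induced maps---so your approach is aligned with the original. Your explicit use of the polynomial cohomology ring and its monomial basis is a particularly transparent way to see why $q\geq 2$ (rather than $q\geq 3$ odd) is already enough for the $\F_2$ independence; the restriction to odd $q$ in the statement is there only because that is where $\Homol_q(\Z/2;\Z)$ is nonzero.
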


Finally, we establish the following last ingredient for the proof of Theorem~\ref{Grunewald-Poincare series formulae},
 which might be of interest in its own right.
Let $\Gamma$ be a finite index subgroup in a Bianchi group, and consider its action on the refined cellular complex.

\begin{lemma} \label{injectivity}
In all rows $q > 1$ and outside connected components of quotient type $\circlegraph$,
 the $2$--torsion part of the $d^1_{p,q}$--differential of the equivariant spectral sequence converging to 
 $\Homol_{p+q}(\Gamma; \thinspace \Z)$ is always injective.
\end{lemma}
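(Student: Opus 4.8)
The plan is to split the differential over the connected components of the quotient, to dispose of even rows for free, and to reduce the odd rows to a linear-independence statement over $\F_2$ that is supplied by Lemmas~\ref{D2blocks} and~\ref{inducedMaps}. Since the $2$--torsion sub-complex is at most one-dimensional, the $d^1$ relevant here is the map $\bigoplus_e\bigl(\Homol_q(\Gamma_e;\Z)\bigr)_{(2)}\to\bigoplus_v\bigl(\Homol_q(\Gamma_v;\Z)\bigr)_{(2)}$ from edge orbits to vertex orbits of that sub-complex. By Remark~\ref{directSumDecomposition} it splits as a direct sum of blocks indexed by the components of the quotient, so it suffices to prove injectivity of the block of one component $C$ that is not homeomorphic to $\circlegraph$. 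If $q>1$ is even then $\Homol_q(\Z/2;\Z)=0$ and the block has zero source; hence assume $q\geq 3$ odd. Then $\bigl(\Homol_q(\Z/2;\Z)\bigr)_{(2)}\cong\Z/2$, and each of $\bigl(\Homol_q(\Kleinfourgroup;\Z)\bigr)_{(2)}$ and $\bigl(\Homol_q(\Afour;\Z)\bigr)_{(2)}$ is an elementary abelian $2$--group (the former, e.g., by the K\"unneth formula, the latter as a transfer summand of it), so the block is an $\F_2$--linear map; write $x_e$ for the canonical basis vector of the summand of an edge $e$ of $C$.

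Next I would pin down the shape of $C$. By Theorem~\ref{isomorphy of graphs} the quotient of the reduced $2$--torsion sub-complex is the $2$--conjugacy classes graph, whose vertices are the conjugacy classes of finite subgroups $G$ carrying a $\Z/2$ whose normaliser in $G$ is larger than itself; by the normaliser table for finite subgroups of the Bianchi groups these are exactly the types $\Kleinfourgroup$ and $\Afour$, a surviving vertex of type $\Z/2$ occurring only as the lone vertex of a disjoint circle, which is homeomorphic to $\circlegraph$ and hence excluded (equivalently, in the reduction every vertex of stabiliser type $\Z/2$ or $\Sthree$ gets merged through, the relevant inclusions inducing isomorphisms on mod~$2$ homology). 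Thus every vertex of $C$ has stabiliser $\Kleinfourgroup$ or $\Afour$, and by Lemma~\ref{geometricRigiditytheorem} it is incident to three edge-ends in the first case and to exactly one in the second --- a self-loop contributing two ends, and no self-loop being possible at an $\Afour$--vertex, which carries a single edge-end. In particular every edge of $C$ has at least one of its ends at a vertex of $C$.

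Finally I would run the linear algebra. For a non-empty set $S$ of edges of $C$, the element $d^1\bigl(\sum_{e\in S}x_e\bigr)$ decomposes over the vertices of $C$, its component at a vertex $v$ being the sum, over those edge-ends at $v$ that belong to edges of $S$, of the images of the corresponding $x_e$ under the inclusions --- conjugated to representatives where necessary --- of the edge stabilisers into $\Gamma_v$. At a $\Kleinfourgroup$--vertex these images, one for each of the three subgroups of order $2$, are linearly independent by Lemma~\ref{D2blocks}, so the component at $v$ vanishes precisely when no edge of $S$ meets $v$; at an $\Afour$--vertex the single image is non-zero because $\Z/2\hookrightarrow\Afour$ is injective on $\Homol_q$ for $q>1$ (Lemma~\ref{inducedMaps}), with the same conclusion. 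Since the components at distinct vertices lie in distinct direct summands of the target, $d^1\bigl(\sum_{e\in S}x_e\bigr)=0$ would force $S$ to contain no edge incident to any vertex of $C$, which is impossible for $S\neq\emptyset$. Hence the block, and so the $2$--primary $d^1$ outside circle components, is injective.

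The step I expect to be the main obstacle is the bookkeeping at $\Kleinfourgroup$--vertices that carry a self-loop: such an edge already contributes the sum of two of the three independent images at $v$, so one must make sure this cannot be cancelled by the contribution of a further incident edge --- which is exactly why one needs all three images to be independent, as in Lemma~\ref{D2blocks}, and not merely pairwise distinct. A secondary point that needs care is the identification in the second paragraph of which stabiliser types occur in a non-circle component, i.e.\ the verification that no degree-two vertex ($\Z/2$ or $\Sthree$) survives the reduction except as a disjoint circle.
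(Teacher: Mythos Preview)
Your proof is correct and follows essentially the same strategy as the paper's: identify the vertex types in a non-circle component as $\Kleinfourgroup$ (three incident edge-ends) or $\Afour$ (one incident edge-end), and then appeal to Lemma~\ref{D2blocks} at the former and Lemma~\ref{inducedMaps} at the latter. The paper's proof is two sentences and leaves the passage from local non-vanishing at each vertex to global injectivity of the differential implicit; you make this explicit by framing the block as an $\F_2$--linear map and running the linear algebra over non-empty edge sets $S$, which also disposes cleanly of the self-loop case you flag as the main obstacle. Your handling of even $q$ and your identification of the surviving vertex types via the conjugacy classes graph are likewise more explicit than the paper's, which simply invokes Lemma~\ref{geometricRigiditytheorem} directly.
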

\begin{proof}
 For matrix blocks of the $2$--torsion part of the $d^1_{p,q}$--differential associated to vertices with just one adjacent edge,
 we see from Lemma~\ref{geometricRigiditytheorem} that the vertex stabilizer is of type~$\Afour$ in \mbox{$2$--torsion,}
 so injectivity follows from Lemma~\ref{inducedMaps}.
As we have placed ourselves outside connected components of quotient type $\circlegraph$,
 the remaining vertices are bifurcation points of stabilizer type~$\Kleinfourgroup$ and injectivity follows from Lemma~\ref{D2blocks}.
\end{proof}

\begin{proof}[Proof of Theorem \emph{\ref{Grunewald-Poincare series formulae}}.]
In $3$--torsion, Theorem~\ref{Grunewald-Poincare series formulae}
 follows directly from Observation~\ref{Kraemer numbers determine homological 3-torsion},
 Corollary~\ref{graph-theoretic formulae} and Theorem~\ref{pivotal}.
In $2$--torsion, what we need to determine with the numbers of conjugacy classes of finite subgroups, is the
\mbox{$2$--primary} part of the $E^2_{p,q}$--term of the equivariant spectral sequence converging to 
 $\Homol_{p+q}(\Gamma; \thinspace \Z)$ in all rows $q > 1$.
 From there, we see from Theorem~\ref{pivotal} that we obtain the claim.
By Remark~\ref{directSumDecomposition},
 we only need to check this determination on each homeomorphism type of connected components of the quotient of the reduced $2$--torsion subcomplex.
 We use Theorem~\ref{isomorphy of graphs} to identify the quotient of the reduced $2$--torsion subcomplex and the $2$--conjugacy classes graph.
 Then we can observe that
\begin{itemize}
 \item Kr\"amer's number $\lambda_4^* -\lambda_4$ determines the number of connected components of type $\circlegraph$.
\item Kr\"amer's number $\lambda_4^*$ determines the number of edges of the $2$--torsion subcomplex orbit space outside connected components of type $\circlegraph$.
Lemma~\ref{injectivity} tells us that the block of the $d^1_{p,q}$--differential of the equivariant spectral sequence associated to such edges is always injective.
\item Kr\"amer's number $\mu_2^-$ determines the number of bifurcation points, and $\mu_T$ determines the number of vertices with only one adjacent edge of the $2$--torsion subcomplex orbit space.
\end{itemize}
Using Corollary~\ref{graph-theoretic formulae}, we obtain the explicit formulae in Theorem~\ref{Grunewald-Poincare series formulae}.
\end{proof}

\bigskip

\section{The cohomology ring structure of the Bianchi groups} \label{cohomology ring}

In \cite{BerkoveMod2}, Berkove has found a compatibility of the cup product of the cohomology ring of a Bianchi group
 with the cup product of the cohomology rings of its finite subgroups.
 This compatibility within the equivariant spectral sequence implies that all products that come
 from different connected components of the quotient of the reduced torsion sub-complex
 (which we turn into the conjugacy classes graph in Section~\ref{The conjugacy classes of finite order elements}) are zero.
It follows that the cohomology ring of any Bianchi group splits into a restricted sum over sub-rings,
 which depend in degrees above the virtual cohomological dimension only on the homeomorphism type
 of the associated connected component of the quotient of the reduced torsion sub-complex.
The analogue in cohomology of Theorem~\ref{Grunewald-Poincare series formulae} and Berkove's computations of sample cohomology rings~\cite{Berkove}
 yield the following corollary in $3$--torsion.

We use Berkove's notation, in which the degree $j$ of a cohomology generator $x_j$ is appended as a subscript.
Furthermore, writing cohomology classes inside square brackets means that they are polynomial (of infinite multiplicative order), and writing them inside parentheses means that they are exterior (their powers vanish).
The restricted sum $\widetilde{\oplus}$ identifies all the degree zero classes into a single copy of~$\Z$; when we write it with a power, we specify the number of summands. Recall that $\lambda_6$ (respectively~$\mu_3$) counts the number of conjugacy classes of subgroups of type $\Z/3$ (respectively~$\Sthree$) in the Bianchi group.

\begin{corollary}
 In degrees above the virtual cohomological dimension, the $3$--primary part of the cohomology ring of any Bianchi group~$\Gamma$ with units $\{ \pm 1 \}$ is given by 
$$ \Homol^*(\Gamma ; \thinspace \Z)_{(3)} 
\cong \widetilde{\oplus}^{(\lambda_6 -\frac{\mu_3}{2})} \Z[x_2](\sigma_1)
\medspace \widetilde{\oplus}^\frac{\mu_3}{2} \Z[x_4](x_3),  $$
where the generators $x_j$ are of additive order $3$.
\end{corollary}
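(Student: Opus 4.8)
The plan is to get the additive shape of the answer from the cohomological analogue of Theorem~\ref{Grunewald-Poincare series formulae}, to get the multiplicative structure from Berkove's analysis of cup products, and then to pin down the sub-ring contributed by each connected component of the quotient of the reduced $3$--torsion sub-complex.

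First I would invoke Berkove's compatibility result \cite{BerkoveMod2}: inside the equivariant spectral sequence converging to $\Homol^*(\Gamma;\thinspace \Z)$, all cup products of classes supported on distinct connected components of the quotient of the reduced torsion sub-complex vanish. Combined with Theorem~\ref{pivotal}, this yields, in degrees above the virtual cohomological dimension, a splitting of $\Homol^*(\Gamma;\thinspace \Z)_{(3)}$ as a restricted sum $\widetilde{\oplus}$ indexed by those connected components, the degree zero parts being identified into a single copy of~$\Z$. By Theorem~\ref{isomorphy of graphs} and the description of the reduced $3$--torsion sub-complex in \cite{Rahm_homological_torsion}, for a Bianchi group with units $\{\pm 1\}$ each such component is either a circle $\circlegraph$ or a single edge $\edgegraph$; by Lemma~\ref{conjugacy class correspondence} its edges correspond to conjugacy classes of subgroups of order $3$, while the valence-one vertices, which occur only on the $\edgegraph$--components, are precisely the conjugacy classes of type~$\Sthree$. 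Counting with the relation $2\lambda_6^* = \mu_3$ of Corollary~\ref{graph-theoretic formulae} together with Observation~\ref{Kraemer numbers determine homological 3-torsion} then gives exactly $\frac{\mu_3}{2}$ components of type~$\edgegraph$ and $\lambda_6 - \frac{\mu_3}{2}$ components of type~$\circlegraph$.

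It then remains to identify the sub-ring attached to each homeomorphism type. For a $\circlegraph$--component, the setwise stabilizer of the underlying rotation axis is $\Z/3 \times \Z$ --- there are no reflections in this case, so the hyperbolic translations along the axis commute with the order-three rotation --- so its $3$--primary Farrell cohomology ring is $\Homol^*(\Z/3;\thinspace \Z)_{(3)} \otimes \Homol^*(\Z;\thinspace \Z)$, that is $\Z[x_2](\sigma_1)$ with $x_2$ and $\sigma_1$ of additive order~$3$. For an $\edgegraph$--component, the stabilizer is an amalgam $\Sthree *_{\Z/3} \Sthree$; a Mayer--Vietoris computation, using that the restriction $\Homol^*(\Sthree;\thinspace \Z)_{(3)} \cong \F_3[x_4]$ embeds into $\Homol^*(\Z/3;\thinspace \Z)_{(3)} \cong \F_3[x_2]$ by $x_4 \mapsto x_2^2$ (compatibly with Lemma~\ref{inducedMaps}), identifies this ring with $\Z[x_4](x_3)$, the exterior class $x_3$ being the image under the connecting homomorphism of the degree-two generator of $\Homol^*(\Z/3;\thinspace \Z)$. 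Both local computations are instances of Berkove's sample calculations in \cite{Berkove}, so at this point it suffices to match conventions; assembling the summands over all components and identifying the degree zero classes produces the stated formula.

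I expect the main obstacle to be the ring structure of the $\edgegraph$--contribution: one must check that the Mayer--Vietoris connecting homomorphism produces a genuinely exterior degree-three class (so that its square vanishes rather than feeding degree~$6$) and that its product with $x_4$ generates degree~$7$. This is forced by the module structure of the Mayer--Vietoris sequence over $\Homol^*(\Sthree *_{\Z/3} \Sthree;\thinspace \Z)_{(3)}$, but is most safely taken over verbatim from Berkove's explicit calculations.
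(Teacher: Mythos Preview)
Your proposal is correct and follows essentially the same route as the paper: the paper simply states that the corollary follows from the cohomological analogue of Theorem~\ref{Grunewald-Poincare series formulae} together with Berkove's sample cohomology ring computations~\cite{Berkove}, and you have spelled out precisely these two ingredients --- the component count via Theorem~\ref{isomorphy of graphs}, Corollary~\ref{graph-theoretic formulae} and Observation~\ref{Kraemer numbers determine homological 3-torsion}, and the local ring structure via Berkove's results. Your additional description of the setwise axis stabilizers ($\Z/3\times\Z$ for~$\circlegraph$ and $\Sthree*_{\Z/3}\Sthree$ for~$\edgegraph$) and the Mayer--Vietoris sketch are a helpful expansion but not a different method; as you yourself note, both computations are ultimately quoted from~\cite{Berkove}.
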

In $2$--torsion, it does in general not suffice to know only the numbers of conjugacy classes of finite subgroups
 to obtain the cohomology ring structure,
 because for the two reduced $2$--torsion sub-complex orbit spaces $\graphTwo \graphTwo$ and $\graphFive \edgegraph$,
 we obtain the same numbers of conjugacy classes and homological $2$--torsion,
 but different multiplicative structures of the mod-$2$ cohomology rings, as we can see from Table~\ref{Restricted summands},
 which we compile from the results of~\cite{BerkoveMod2} (and~\cite{Rahm_homological_torsion}).
\begin{table}
$$\begin{array}{|c|c|}
\hline
T & {\rm Subring \medspace associated \medspace to \medspace connected \medspace components \medspace of \medspace type \medspace } T {\rm \medspace in \medspace the \medspace 2\text{--conjugacy} \medspace classes \medspace graph} \\
\hline & \\ 
\circlegraph & \F_2[n_1](m_1) \\ &\\
\edgegraph & \F_2[m_3, u_2, v_3, w_3]/
\langle m_3 v_3 = 0, \quad u_2^3 +w_3^2 +v_3^2 +m_3^2 +w_3(v_3 +m_3) = 0 \rangle \\ &\\
\graphTwo & \F_2[n_1, m_2, n_3, m_3]/ 
\langle n_1 n_3  = 0,  \quad m_2^3 +m_3^2 +n_3^2 +m_3 n_3 +n_1 m_2 m_3 = 0 \rangle \\& \\
\graphFive & \F_2[n_1, m_1, m_3]/
\langle m_3(m_3 +n_1^2 m_1 +n_1 m_1^2) = 0 \rangle \\ &\\
\hline
\end{array}$$
\caption{Restricted summands of  $\Homol^*(\PSLO; \F_2)$ above the virtual cohomological dimension.}
\label{Restricted summands} 
\end{table}
\begin{observation}
In the cases of class numbers $1$ and $2$, only the homeomorphism
types $T$ listed in Table~\ref{Restricted summands} occur as connected components in the quotient of the reduced $2$--torsion sub-complex.
So for all such Bianchi groups with units $\{ \pm 1 \}$, the mod-2 cohomology ring $\Homol^*(\PSLO; \F_2)$ splits, above the virtual cohomological dimension,
 as a restricted sum over the sub-rings specified in Table~\ref{Restricted summands}, with powers according to the multiplicities of the occurrences of the types $T$.
\end{observation}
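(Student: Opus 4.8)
The Observation has two parts, and the second follows formally from the first. Granting that every connected component of the quotient of the reduced $2$--torsion sub-complex is homeomorphic to one of $\circlegraph$, $\edgegraph$, $\graphTwo$ or $\graphFive$, the asserted ring splitting is immediate from Berkove's compatibility of cup products within the equivariant spectral sequence \cite{BerkoveMod2}, exactly as recorded in the discussion preceding Table~\ref{Restricted summands}: above the virtual cohomological dimension $\Homol^*(\PSLO;\F_2)$ splits as a restricted sum of sub-rings indexed by the connected components, each sub-ring depending only on the homeomorphism type of its component, and Table~\ref{Restricted summands} lists these sub-rings for precisely the four types at hand. So the task is to determine, for class numbers $1$ and $2$, the list of component types that occurs.

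The plan is to first recast this as a finite combinatorial problem. By Theorem~\ref{isomorphy of graphs} the quotient of the reduced $2$--torsion sub-complex is the $2$--conjugacy classes graph, whose vertices, by the normalizer table of Section~\ref{The conjugacy classes of finite order elements}, are exactly the conjugacy classes of subgroups isomorphic to $\Kleinfourgroup$ or to $\Afour$ --- these being the only finite subgroup types in which a subgroup of order $2$ has strictly larger normalizer. By Lemma~\ref{geometricRigiditytheorem} (and the bookkeeping in the proof of Theorem~\ref{Grunewald-Poincare series formulae}), a vertex of type $\Afour$ is a valence--$1$ vertex and a vertex of type $\Kleinfourgroup$ meets three edge--ends, a loop counting twice. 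A handshake count together with the first Betti number then enumerates the admissible shapes of a connected component: with no named vertex one gets the bare circle $\circlegraph$; with no $\Kleinfourgroup$--vertex one gets a tree with $\Afour$--leaves, the smallest being the single edge $\edgegraph$; with one $\Kleinfourgroup$--vertex one gets a loop with a pendant $\Afour$, that is $\graphTwo$ (or the star $K_{1,3}$ of a $\Kleinfourgroup$--vertex with three pendant $\Afour$'s); and with two $\Kleinfourgroup$--vertices joined by three edges one gets the theta graph $\graphFive$. In particular $\circlegraph,\edgegraph,\graphTwo,\graphFive$ are precisely the component types using at most two $\Kleinfourgroup$--classes and a minimal number of $\Afour$--classes; every other type ($K_{1,3}$, larger trees, theta graphs with extra pendants, \dots) costs strictly more conjugacy classes of finite subgroups.

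It then remains to verify that for a Bianchi group $\PSLO$ with units $\{\pm1\}$ and $h\bigl(\rationals(\sqrt{-m})\bigr)\le 2$ --- a finite, explicit list of square--free $m$, from which $m=1$ and $m=3$ are excluded and treated separately \cite{SchwermerVogtmann}, \cite{Rahm_homological_torsion} --- none of the larger component types occurs. For this I would run through the list, using the explicit reduced $2$--torsion sub-complexes computed in \cite{Rahm_homological_torsion}, cross--checked against the machine computations \cite{BianchiGP} and against Kr\"amer's numbers $\lambda_4,\lambda_4^*,\mu_2^-,\mu_T$ evaluated in Appendix~\ref{Numerical evaluation of Kraemer's formulae in 2-torsion} (Corollary~\ref{graph-theoretic formulae} keeping these numbers small): in each case one reads off directly that every connected component is one of the four types. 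Substituting this into the ring splitting of the first paragraph and reading the sub-rings from Table~\ref{Restricted summands} yields the Observation.

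The main obstacle is that there is no purely group--theoretic mechanism forcing the components to remain small at class number $\le 2$; the bound genuinely relies on the explicit geometry of the Bianchi fundamental polyhedra through \cite{Rahm_homological_torsion}, so the argument is a finite verification rather than a structural one. This is also why the hypothesis $h\le 2$ cannot be dropped: as the paragraph preceding the Observation notes (with the orbit spaces $\graphTwo\,\graphTwo$ versus $\graphFive\,\edgegraph$), for larger class number richer component types appear and the clean description in terms of Kr\"amer's numbers alone breaks down.
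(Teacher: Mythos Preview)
Your proposal is correct and matches the paper's own stance: the statement is recorded as an \emph{Observation} without a formal proof, because it amounts to the finite case-by-case check you outline. The paper implicitly relies on the explicit reduced $2$--torsion sub-complexes computed in \cite{Rahm_homological_torsion} (and cross-checked via \cite{BianchiGP}) for the finitely many imaginary quadratic fields of class number at most~$2$, together with Berkove's splitting principle from \cite{BerkoveMod2}; your write-up makes this reasoning explicit and adds the helpful combinatorial enumeration of admissible component shapes.
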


\bigskip
\newpage

\begin{appendix}
\section{Numerical evaluation of Kr\"amer's formulae}
\subsection{Numbers of conjugacy classes in $3$--torsion} \label{Numerical evaluation of Kraemer's formulae in 3-torsion}

Denote by $\delta$ the number of finite ramification places of $\rationals(\sqrt{-m}\thinspace)$ over $\rationals$.
Let $k_+$ be the totally real number field $\rationals(\sqrt{3m}\thinspace)$ and denote its ideal class number by $h_{k_+}$.
Kr\"amer introduces the following indicators:
$$ z := \begin{cases} 2, & {\rm if} \medspace 3 \medspace {\rm is} \medspace {\rm the} \medspace {\rm norm} \medspace {\rm of} \medspace {\rm an} \medspace {\rm integer} \medspace {\rm of} \medspace k_+, \\
1,  & {\rm otherwise.} \end{cases}$$
For $m \equiv 0 \mod 3$ and $m \neq 3$, denote by $\epsilon := \frac{1}{2}(a+b\sqrt{\frac{m}{3}}) > 1$ the fundamental unit of~$k_+$ (where $a, b \in \N$).
Now, define
$$ x' := \begin{cases} 2, & {\rm if} \medspace {\rm the} \medspace {\rm norm} \medspace {\rm of} \medspace \epsilon \medspace {\rm is} \medspace 1, \\
1,  & {\rm if} \medspace {\rm the} \medspace {\rm norm} \medspace {\rm of} \medspace \epsilon \medspace {\rm is} \medspace -1 \end{cases}$$
and
$$ y := \begin{cases} 2, & {\rm if} \medspace b \equiv 0 \mod 3, \\
1,  & {\rm otherwise.} \end{cases}$$
Then \cite{Kraemer}*{20.39 and 20.41} yield the following formulae in $3$--torsion.
$$\begin{array}{|l|l|c|}
\hline & &\\
 m {\rm \medspace specifying \medspace Bianchi \medspace groups } \medspace \PSLO  & \lambda_6^* & \lambda_6 -\lambda_6^* \\
\hline & &\\
m \equiv 2 \mod 3 & 0 & \frac{z}{2}h_{k_+} \\
& &\\
\hline & &\\
m \equiv 1 \mod 3 {\rm \medspace gives \medspace either }& 2^{\delta -1} & \frac{1}{2}(h_{k_+} -2^{\delta -1}) \\
& &\\
 {\rm \medspace or }& 0 & \frac{1}{2}h_{k_+} \\
& &\\
\hline & &\\
m \equiv 6 \mod 9 & 0 & x' y h_{k_+} \\
& &\\
\hline & &\\
m \equiv 3 \mod 9, \medspace m \neq 3 {\rm \medspace gives \medspace either }& 2^{\delta -2} & \frac{1}{2}(3x' h_{k_+} -2^{\delta -2}) \\
& &\\
 {\rm \medspace or }& 0 & \frac{1}{2}3 x' h_{k_+} \\
& &\\
\hline
\end{array}$$
The above case distinctions come from the fact that Kr\"amer's theorem 20.39 ranges over all types of maximal orders in quaternion algebras over $\rationals(\sqrt{-m}\thinspace)$, in which Kr\"amer determines the numbers of conjugacy classes in the norm-1-group.
The remaining task in order to decide which of the cases applies, is to find out of which type considered in the mentioned theorem is the maximal order M$_2(\ringO_{-m})$.
Some methods to cope with this task are introduced in \cite{Kraemer}*{\S 27}.

\newpage
Kr\"amer's resulting criteria can be summarized as follows for 3-torsion.

$$\begin{array}{|l|l|}
\hline & \\
   {\rm condition} & {\rm implication} \\
\hline & \\
m \equiv 2 \mod 3 & \mu_3 = \lambda_6^* = 0. \\& \\
m \equiv 6 \mod 9 & \mu_3 = \lambda_6^* = 0. \\& \\
m \medspace {\rm prime} \medspace {\rm and} \medspace m \equiv 1   \mod 3 & \lambda^*_6 > 0. \\& \\
m = 3p \medspace {\rm with} \medspace p \medspace {\rm prime} \medspace {\rm and} \medspace p \equiv 1  \mod 3 & \lambda^*_6 > 0. \\& \\

m \equiv 1 \mod 3  \medspace {\rm and} \medspace -3 \medspace {\rm occurs} \medspace {\rm as} \medspace {\rm norm}  \medspace {\rm on} \medspace \ringO_{k_+}
&  \lambda^*_6 > 0. \\& \\

m \equiv 1 \mod 3  \medspace {\rm and} \medspace -3 \medspace {\rm does} \medspace {\rm not} \medspace {\rm occur} \medspace {\rm as} \medspace {\rm norm}  \medspace {\rm on} \medspace \ringO_{k_+}
& \lambda_6 -\lambda^*_6 > 0. \\& \\

m \equiv 1 \mod 3  \medspace {\rm and} \medspace m \medspace {\rm admits} \medspace {\rm a} \medspace {\rm prime} \medspace {\rm divisor} \medspace p  \medspace {\rm with} \medspace p \equiv 2 \mod 3
& \lambda_6 -\lambda^*_6 > 0. \\& \\

m \equiv 3  \mod 9 \medspace {\rm and} \medspace x' = 1 &\\ \medspace {\rm and}  \medspace  \frac{m}{3} \medspace {\rm admits} \medspace {\rm only} \medspace {\rm prime}  \medspace {\rm divisors} \medspace p \medspace {\rm with} \medspace p \equiv 1 \mod 12 & \lambda^*_6 > 0. \\& \\
m \equiv 3  \mod 9 \medspace {\rm and} \medspace x' = 1 &\\ \medspace {\rm and}  \medspace  \frac{m}{3} \medspace {\rm admits} \medspace {\rm a} \medspace {\rm prime}  \medspace {\rm divisor} \medspace p \medspace {\rm with} \medspace p \equiv 5 \mod 12 & \lambda^*_6 = 0. \\& \\
m \equiv 3  \mod 9 \medspace {\rm and} \medspace  h(k_+') = 2^{\delta-3} &\\ \medspace {\rm and}  \medspace  \frac{m}{3} \medspace {\rm admits} \medspace {\rm only} \medspace {\rm prime}  \medspace {\rm divisors} \medspace p \medspace {\rm with} \medspace p \equiv \pm 1 \mod 12 \medspace {\rm or} \medspace p = 2 & \lambda^*_6 = 0. \\& \\
m \equiv 3  \mod 9 \medspace {\rm and} \medspace  h(k_+') = 1 &\\ \medspace {\rm and}  \medspace  \frac{m}{3} = p'p \medspace {\rm with} \medspace p',p \medspace {\rm prime}  \medspace {\rm and} \medspace p' \equiv p  \equiv 7 \mod 12  & \lambda^*_6 > 0.  \\& \\
\hline 
  \end{array}$$

In order to determine Kr\"amer's indicator $z$, we need to determine if a given value occurs as the norm on the ring of integers of an imaginary quadratic number field. This is implemented in Pari/GP \cite{Pari} (the first step is computing the answer under the Generalized Riemann hypothesis, and the second step is a check computation which confirms that we arrive at that answer without this hypothesis). Additionally, we compare with the below criterion \cite{Kraemer}*{(20.13)}.
\begin{lemma}[Kr\"amer]
Let $m$ be not divisible by $3$.
\begin{itemize}
\item If the number $-3$ is the norm of an integer in the totally real number field $k_+$, then all prime divisors $p\in \N$ of $m$ satisfy the congruence $p \equiv 1 \mod 3$. \\
 Especially, the congruence $m \equiv 1 \mod 3$ is implied.
\item If the number $3$ is the norm of an integer in the totally real number field $k_+$, then all prime divisors $p\in \N$ of $m$ satisfy either $p = 2$ or the congruence $p \equiv \pm 1 \mod 12$. \\
Additionally, the congruence $m \equiv 2 \mod 3$ is implied.
\end{itemize}
\end{lemma}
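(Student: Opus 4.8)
The plan is to turn a hypothetical integer of norm $\pm 3$ in $k_+ = \rationals(\sqrt{3m})$ into an explicit binary quadratic Diophantine equation and then read off the congruences by reducing that equation modulo $3$ and modulo the prime divisors of $m$. Since $m$ is square-free and prime to $3$, the integer $3m$ is square-free, so $\ringO_{k_+} = \Z[\sqrt{3m}]$ when $3m \equiv 2, 3 \bmod 4$ and $\ringO_{k_+} = \Z[\tfrac{1+\sqrt{3m}}{2}]$ when $3m \equiv 1 \bmod 4$ (the last case forcing $m \equiv 3 \bmod 4$). Accordingly, from $4N(\xi) = (2\xi)(\overline{2\xi})$ one sees that an element $\xi$ with $N(\xi) = 3\varepsilon$, $\varepsilon \in \{+1, -1\}$, yields integers $a, b$ with either $a^2 - 3m\,b^2 = 3\varepsilon$, or $a^2 - 3m\,b^2 = 12\varepsilon$ with $a, b$ both odd (the latter possible only when $3m \equiv 1 \bmod 4$).

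First I would extract the mod $3$ condition. Both the right-hand side and $3m\,b^2$ are divisible by $3$, so $3 \mid a^2$ and hence $3 \mid a$; writing $a = 3a_1$ and dividing through by $3$ turns the equation into $3a_1^2 - m\,b^2 = \varepsilon$ or $3a_1^2 - m\,b^2 = 4\varepsilon$. Reducing modulo $3$ gives $m\,b^2 \equiv -\varepsilon \bmod 3$ in both cases, which forces $b^2 \equiv 1$ and $m \equiv -\varepsilon \bmod 3$, i.e.\ $m \equiv 1 \bmod 3$ when $\varepsilon = -1$ and $m \equiv 2 \bmod 3$ when $\varepsilon = +1$. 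Next, for an odd prime $p \mid m$, reducing the original equation modulo $p$ gives $a^2 \equiv 3\varepsilon \bmod p$ or $a^2 \equiv 12\varepsilon \bmod p$, and $p \nmid a$ (else $p \mid a_1$ and the reduced equation would read $0 \equiv \varepsilon$, resp.\ $0 \equiv 4\varepsilon$, impossible for odd $p$); hence in either case $3\varepsilon$ is a nonzero square modulo $p$. By quadratic reciprocity $\left(\tfrac{-3}{p}\right) = \left(\tfrac{p}{3}\right)$, which equals $1$ precisely for $p \equiv 1 \bmod 3$, while $\left(\tfrac{3}{p}\right) = 1$ precisely for $p \equiv \pm 1 \bmod 12$; this yields the asserted congruence for every odd prime divisor of $m$.

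The one remaining point, and the main obstacle, is the prime $p = 2$. For $\varepsilon = +1$ the statement permits $p = 2$, so there is nothing to do. For $\varepsilon = -1$ one must rule out $2 \mid m$: if $2 \mid m$ then $3m$ is even and square-free, so $3m \equiv 2 \bmod 4$, $\ringO_{k_+} = \Z[\sqrt{3m}]$, and the equation is $a^2 - 3m\,b^2 = -3$; with $a = 3a_1$ this becomes $3a_1^2 - m\,b^2 = -1$, and writing $m = 2m_1$ with $m_1$ odd, reduction modulo $2$ forces $a_1$ odd, hence $a_1^2 \equiv 1 \bmod 8$, and then $3 - 2m_1 b^2 \equiv -1 \bmod 8$ forces $m_1 b^2 \equiv 2 \bmod 4$, which is impossible since $m_1$ is odd and $b^2 \in \{0,1\} \bmod 4$. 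Thus $2 \nmid m$, every prime divisor of $m$ satisfies the stated congruence, and in particular $m \equiv 1 \bmod 3$ in the first case; in the second case $m \equiv 2 \bmod 3$ has already been obtained from the mod $3$ reduction. I expect the bookkeeping of which of the two possible rings $\ringO_{k_+}$ occurs, together with the mod $8$ argument at $p = 2$, to be the only delicate steps; everything else is a routine reciprocity computation.
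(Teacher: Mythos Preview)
The paper does not actually prove this lemma; it is merely quoted from Kr\"amer's Diplomarbeit as criterion \cite{Kraemer}*{(20.13)} and used as a black box for the numerical evaluations in the appendix. So there is no proof in the paper against which to compare your argument.

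That said, your argument is correct and is the natural elementary route. The reduction of the norm equation to $a^2 - 3m b^2 \in \{3\varepsilon, 12\varepsilon\}$, the extraction of $3\mid a$ and the ensuing congruence $m \equiv -\varepsilon \pmod 3$, and the Legendre-symbol computation for odd $p \mid m$ are all clean. The mod~$8$ elimination of the prime $2$ in the $\varepsilon = -1$ case is the right way to handle the one genuinely non-automatic point, and your parity bookkeeping there is accurate. One small remark: you silently use that $m$ is square-free (so that $3m$ is square-free and the description of $\ringO_{k_+}$ holds, and so that $2\mid m$ forces $m\equiv 2\bmod 4$); this is of course the standing convention for the $m$ indexing the Bianchi groups, but it is worth making explicit since the lemma as stated does not mention it.
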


\vbox{
With Kr\"amer's criteria at hand, we can decide for many Bianchi groups, which of the alternative cases in Kr\"amer's formulae must be used.
We do this in the below tables for all such Bianchi groups $\PSLO$ with absolute value of the discriminant $\Delta$ ranging between $7$ and $2003$, where we recall that the discriminant is \mbox{$\Delta = \begin{cases} -m, & m \equiv 3 \mod 4, \\
-4m,  & \mathrm{else.} \end{cases}$}

In the cases $m \in \{102, 133, 165, 259, 559, 595, 763, 835, 1435\}$, where these statements are not sufficient to eliminate the wrong alternatives, we insert the results of~\cite{BianchiGP}. This way, the below tables treat all Bianchi groups with units $\{\pm 1\}$ and discriminant of absolute value less than $615$. The cases where an ambiguity remains (so to exclude them from our tables) are
$m \in \{210, 262, 273, 298, 345, 426, 430, 462, 481, 615, 1155, 1159, 1195, 1339, 1351, 1407, 1515, 1807\}$.
For tables of the cases without ambiguity, with $m$ ranging up to 10000, see the preprint version~2 of this paper on HAL.

In~\cite{manuscript}, a theorem is established which solves all these ambiguities by giving for each type of finite subgroups in $\PSLO$ criteria equivalent to its occurrence, in terms of congruence conditions on the prime divisors of $m$. 
\bigskip

\begin{tabular}{|l|l|}
\hline & \\
 \scriptsize  $\begin{array}{c}3{\rm -conjugacy} \\ {\rm classes  \medspace graph}\end{array}$ & \normalsize $m {\rm \medspace specifying \medspace \medspace Bianchi \medspace groups} \medspace \PSLO \medspace {\rm with \medspace this} \medspace 3{\rm -conjugacy \medspace classes  \medspace graph}$ \\
\hline & \\
$\circlegraph $  & \scriptsize 2, 5, 6, 10, 11, 14, 15, 17, 22, 23, 29, 34, 35, 38, 41, 46, 47, 51, 53, 55, \\ & \scriptsize 58, 59, 62, 71, 82, 83, 86, 87, 89, 94, 95, 101, 106, 113, 115, 118, 119, 123, 131, 134, \\ & \scriptsize 137, 142, 149, 155, 158, 159, 166, 167, 173, 178, 179, 187, 191, 197, 202, 203, 206, 214, 215, 226, \\ & \scriptsize 227, 233, 235, 239, 251, 254, 257, 263, 267, 269, 274, 278, 281, 287, 293, 295, 303, 311, 317, 319, \\ & \scriptsize 323, 326, 334, 335, 339, 346, 347, 353, 355, 358, 359, 371, 382, 383, 389, 391, 394, 395, 398, 401, \\ & \scriptsize 411, 415, 422, 431, 443, 446, 447, 449, 451, 454, 461, 466, 467, 478, 479, 491, 515, 519, 527, 535, \\ & \scriptsize 551, 563, 583, 591, 599, 623, 635, 647, 655, 659, 667, 683, 695, 699, 707, 719, 731, 743, 755, 779, \\ & \scriptsize 791, 799, 807, 815, 827, 839, 843, 879, 887, 895, 899, 911, 943, 947, 951, 955, 959, 979, 983, 995, \\ & \scriptsize 1003, 1019, 1031, 1055, 1059, 1091, 1103, 1111, 1115, 1135, 1139, 1151, 1163, 1167, 1187, 1207, \\ & \scriptsize 1211, 1219, 1223, 1243,  1247, 1255, 1259, 1271, 1283, 1307, 1315, 1343, 1347, 1363, 1367, 1379, \\ & \scriptsize 1383, 1411, 1415, 1439, 1487, 1499, 1507, 1511,  1523, 1527, 1535, 1555, 1559, 1563, 1571, 1607, \\ & \scriptsize 1631, 1639, 1643, 1655, 1667, 1671, 1707, 1711, 1735, 1751, 1763, 1779,  1787, 1795, 1799, 1811, \\ & \scriptsize 1819, 1823, 1835, 1847, 1851, 1883, 1903, 1907, 1915, 1919, 1923, 1927, 1931, 1943, 1959, 1979,  2003,  \\ & \\ 
2 $\circlegraph $  & \scriptsize 26, 42, 65, 69, 70, 74, 77, 78, 85, 110, 122, 130, 141, 143, 145, 154, 161, 170, 182, 185, 186, 190, 194, \\ & \scriptsize  195, 205, 209, 213, 218, 221, 222, 230, 231, 238, 253, 265,  266, 286, 305, 310, 314, 322, 329, 365,  \\ & \scriptsize 366, 370, 377, 386, 406, 407, 410, 418, 434, 437, 442, 445, 455,  458, 470, 473,  474, 483, 485, 493, 494, \\ & \scriptsize 497, 555, 611, 627, 671, 715, 767, 803, 851, 923, 935, 1015, 1079, 1095, 1199,  1235, 1295, 1311, 1391,  \\ & \scriptsize  1403, 1455, 1463, 1491, 1495, 1595, 1599, 1615, 1679, 1703, 1739, 1771, 1855, 1887, 1991,  \\ & \\ 
3 $\circlegraph $  & \scriptsize 30, 66, 107, 138, 174, 255, 282, 302, 318, 354, 419, 498, 503, 759, 771, 795, 835, 863, 1007, 1319, \\ & \scriptsize 1355, 1427, 1479, 1551, 1583, 1619, 1691, 1695, 1871, 1895, 1947, 1967,  \\ & \\ 
4 $\circlegraph $  & \scriptsize 33, 105, 114, 146, 177, 249, 258, 285, 290, 299, 321, 330, 341, 357, 374, 385, 393, 402, 413, \\ & \scriptsize 429, 465, 482, 595, 663, 915, 987, 1023, 1067, 1239, 1435, 1727, 1743, 1955, 1995,  \\ & \\ 
5 $\circlegraph $  & \scriptsize 1043, 1203, 1451,  \\ & \\ 
6 $\circlegraph $  & \scriptsize 102, 165, 246, 362, 390, 435, 1335, 1419, 1547,  \\ & \\ 
7 $\circlegraph $  & \scriptsize 587, 971,  \\ & \\ 
\hline
  \end{tabular}
}

\begin{tabular}{|l|l|}
\hline & \\
 \scriptsize  $\begin{array}{c}3{\rm -conjugacy} \\ {\rm classes  \medspace graph}\end{array}$ & \normalsize $m {\rm \medspace specifying \medspace \medspace Bianchi \medspace groups} \medspace \PSLO \medspace {\rm with \medspace this} \medspace 3{\rm -conjugacy \medspace classes  \medspace graph}$ \\
\hline & \\
8 $\circlegraph $  & \scriptsize 438, 1131, 1635,  \\ & \\ 
 $\edgegraph$  & \scriptsize 7, 19, 31, 43, 67, 79, 103, 127, 139, 151, 163, 199, 211, 223, 271, 283, 307, 379, 439, \\ & \scriptsize 463, 487, 499, 523, 571, 607, 619, 631, 691, 727, 739, 751, 787, 811, 823, 859, 883, 907, 919, 967, \\ & \scriptsize 991, 1039, 1051, 1063, 1123, 1171, 1231, 1279, 1303, 1399, 1423, 1447, 1459, 1471, 1483, 1531, \\ & \scriptsize 1543, 1567, 1579, 1627,  1663, 1699, 1723, 1759, 1783, 1831, 1867, 1987, 1999,  \\ & \\ 
 $\edgegraph$  $\coprod $  $\circlegraph $  & \scriptsize 39, 111, 183, 219, 291, 327, 331, 367, 471, 543, 579, 643, 723, 831, 939, 1011, 1047, 1087, 1119, \\ & \scriptsize 1191, 1227, 1263, 1291, 1299, 1327, 1371, 1623, 1803, 1839, 1879, 1951, 1983,  \\ & \\ 
 $\edgegraph$  $\coprod $ 2 $\circlegraph $  & \scriptsize 547, 1747,  \\ & \\ 
 $\edgegraph$  $\coprod $ 4 $\circlegraph $  & \scriptsize 687,  \\ & \\ 
 $\edgegraph$  $\coprod $ 10 $\circlegraph $  & \scriptsize 1731,  \\ & \\ 
2 $\edgegraph$  & \scriptsize 13, 37, 61, 91, 109, 157, 181, 229, 247, 277, 349, 373, 403, 421, 427, 511, 679, 703, 871, \\ & \scriptsize 1099, 1147, 1267, 1591, 1603, 1687, 1891, 1963,  \\ & \\ 
2 $\edgegraph$  $\coprod $  $\circlegraph $  & \scriptsize 73, 97, 193, 241, 259, 313, 337, 409, 457, 559, 763, 1651, 1939,  \\ & \\ 
2 $\edgegraph$  $\coprod $ 2 $\circlegraph $  & \scriptsize 21, 57, 93, 129, 201, 309, 381, 397, 399, 417, 453, 489, 651, 903, 1443, 1659, 1767, 1843,  \\ & \\ 
2 $\edgegraph$  $\coprod $ 3 $\circlegraph $  & \scriptsize 433, 1027, 1387,  \\ & \\ 
2 $\edgegraph$  $\coprod $ 8 $\circlegraph $  & \scriptsize 237,  \\ & \\ 
4 $\edgegraph$  & \scriptsize 217, 301, 469,  \\ & \\ 
4 $\edgegraph$  $\coprod $ 2 $\circlegraph $  & \scriptsize 133.  \\ & \\ 
\hline
  \end{tabular}

\vbox{
\subsection{Numbers of conjugacy classes in $2$--torsion} \label{Numerical evaluation of Kraemer's formulae in 2-torsion}

Denote by $\delta$ the number of finite ramification places of $\rationals(\sqrt{-m}\thinspace)$ over $\rationals$.
Let $k_+$ be the totally real number field $\rationals(\sqrt{m}\thinspace)$ and denote its ideal class number by $h_{k_+}$.
For $m \neq 1$, Kr\"amer introduces the following indicators:

$ z := \begin{cases} 2, & {\rm if} \medspace 2 \medspace {\rm is} \medspace {\rm the} \medspace {\rm norm} \medspace {\rm of} \medspace {\rm an} \medspace {\rm integer} \medspace {\rm of} \medspace k_+, \\
1,  & {\rm otherwise,} \end{cases}
\hfill
 q := \begin{cases} 2, & {\rm if} \medspace \pm 2 \medspace {\rm is} \medspace {\rm the} \medspace {\rm norm} \medspace {\rm of} \medspace {\rm an} \medspace {\rm integer} \medspace {\rm of} \medspace k_+, \\
1,  & {\rm otherwise,} \end{cases}$
$$ w := \begin{cases} 2, & {\rm if} \medspace \forall \medspace {\rm prime} \medspace {\rm divisors} \medspace p \medspace {\rm of} \medspace m \medspace{\rm with} \medspace p \neq 2 \medspace {\rm we} \medspace {\rm have} \medspace p \equiv \pm 1 \mod 8, \\
1,  &  {\rm if} \medspace m \medspace {\rm admits} \medspace {\rm prime} \medspace {\rm divisors}  \medspace p \equiv \pm 3 \mod 8. \end{cases}$$
Furthermore, denote by $\epsilon := \frac{1}{2}(a+b\sqrt{m}) > 1$ the fundamental unit of~$k_+$ (where $a, b \in \N$).
Now, define
\\
$ x := \begin{cases} 2, & {\rm if} \medspace {\rm the} \medspace {\rm norm} \medspace {\rm of} \medspace \epsilon \medspace {\rm is} \medspace 1, \\
1,  & {\rm if} \medspace {\rm the} \medspace {\rm norm} \medspace {\rm of} \medspace \epsilon \medspace {\rm is} \medspace -1 \end{cases}$
\hfill
 and
\hfill
$ y := \begin{cases} 3, & {\rm if} \medspace b \equiv 0 \mod 2, \\
1,  & {\rm if} \medspace b \equiv 1 \mod 2. \end{cases}$
\\
Then \cite{Kraemer}*{26.12 and 26.14} yield the following formulae in $2$--torsion.
$$\begin{array}{|l|c|c|c|c|c|}
\hline
 m {\rm \medspace specifying \medspace Bianchi \medspace groups } \medspace \PSLO & \mu_T & \mu_2^- & \lambda_4^T & \lambda_4^* & \lambda_4 -\lambda_4^* \\
\hline & &&&&\\
m \equiv 7 \mod 8 & 0 & 0 & 0 & 0 & \frac{z}{2}h_{k_+} \\
& && &&\\
\hline & && &&\\
m \equiv 3 \mod 8 {\rm \medspace gives \medspace either } & 2^\delta & 0 & 2^{\delta -1} & 2^{\delta -1} & \frac{1}{2}(h_{k_+} -2^{\delta -1}) \\
& && &&\\
\medspace {\rm or} \medspace ({\rm provided} \medspace {\rm that} \medspace 2^{\delta -1} > 1) & 0 & 0 & 0 & 0 & \frac{1}{2}h_{k_+} \\
& && &&\\
\hline & && &&\\
m \equiv 2 \mod 4 \medspace {\rm and} \medspace w = 2 {\rm \medspace gives \medspace either } & 2^{\delta -1} & 2^{\delta -1} & 2^{\delta -2}z & 2^{\delta} & \frac{1}{4}x(z+2)h_{k_+} -2^{\delta -1} \\
& && &&\\
\medspace {\rm or} \medspace ({\rm provided} \medspace {\rm that} \medspace 2^{\delta -1} > 1)& 0 & 0 & 0 & 0 & \frac{1}{4}x(z+2)h_{k_+} \\
& && &&\\
\hline & && &&\\
m \equiv 2 \mod 4 \medspace {\rm and} \medspace w = 1 {\rm \medspace gives \medspace either } & 2^{\delta -1} & 0 & 2^{\delta -2} & 2^{\delta-2} & \frac{1}{2}(\frac{3}{2}xh_{k_+} -2^{\delta -2}) \\
& && &&\\
 \medspace {\rm or}  & 0 & 2^{\delta -1} & 0 & 2^{\delta -2}3 & \frac{3}{2}(\frac{1}{2}xh_{k_+} -2^{\delta -2}) \\
& && &&\\
\medspace {\rm or} \medspace ({\rm provided} \medspace {\rm that} \medspace 2^{\delta -1} > 2) & 0 & 0 & 0 & 0 & \frac{3}{4}x h_{k_+} \\
& && &&\\
\hline & && &&\\
m \equiv 1 \mod 8 \medspace {\rm and} \medspace m \neq 1 \medspace {\rm and} \medspace w = 2 {\rm \medspace gives \medspace either } & 2^{\delta -1} & 2^{\delta -1} & 2^{\delta -2} & 2^{\delta} & 2x h_{k_+} -2^{\delta -1} \\
& && &&\\
\medspace {\rm or} \medspace ({\rm provided} \medspace {\rm that} \medspace 2^{\delta -2} > 1)& 0 & 0 & 0 & 0 & 2x h_{k_+} \\
& && &&\\
\hline & && &&\\
m \equiv 1 \mod 8 \medspace \medspace {\rm and} \medspace w = 1 {\rm \medspace gives \medspace either } & 2^{\delta -1} & 0 & 2^{\delta -2} & 2^{\delta-2} & 2x h_{k_+} -2^{\delta -3} \\
& && &&\\
 \medspace {\rm or}  & 0 & 2^{\delta -1} & 0 & 2^{\delta -2}3 & 2x h_{k_+} -2^{\delta -3}3 \\
& && &&\\
\medspace {\rm or} \medspace ({\rm provided} \medspace {\rm that} \medspace 2^{\delta -2} > 2) & 0 & 0 & 0 & 0 & 2x h_{k_+} \\
& && &&\\
\hline & && &&\\
m \equiv 5 \mod 8 & 0 & 2^{\delta -1} & 0 & 2^{\delta -2}3 & \frac{1}{2}\left(x(2y+1) h_{k_+} -2^{\delta -2}3 \right) \\
& && &&\\
\medspace {\rm or} \medspace ({\rm provided} \medspace {\rm that} \medspace 2^{\delta -2} > 1) & 0 & 0 & 0 & 0 & \frac{1}{2}x (2y+1) h_{k_+} \\
\hline
\end{array}$$
}

\vbox{
The above case distinctions come from the fact that Kr\"amer's theorem 26.12 ranges over all types of maximal orders in quaternion algebras over $\rationals(\sqrt{-m}\thinspace)$, in which Kr\"amer determines the numbers of conjugacy classes in the norm-1-group.
The remaining task in order to decide which of the cases applies, is to find out of which type considered in the mentioned theorem is the maximal order M$_2(\ringO_{-m})$.
Some methods to cope with this task are introduced in \cite{Kraemer}*{\S 27},
 where Kr\"amer obtains the following criteria for the $2$--torsion numbers:
$$\begin{array}{|l|l|}
\hline & \\
   {\rm condition} & {\rm implication} \\
\hline & \\
m \equiv 7 \mod 8 & \mu_T = \mu_2^- = \lambda_4^T = \lambda_4^* = 0. \\ & \\
m \equiv 5 \mod 8 & \mu_T = \lambda_4^T = 0. \\& \\
m \equiv 21 \mod 24 & \lambda^*_4 = 0. \\& \\
m \equiv 0 \mod 6 \medspace {\rm and} \medspace \lambda^*_4 > 0 & \lambda_4^T > 0. \\& \\
m \equiv 9 \mod 24 \medspace {\rm and} \medspace \lambda^*_4 > 0 & \lambda_4^T > 0. \\& \\
m \medspace {\rm prime} \medspace {\rm and} \medspace m \equiv 1 \medspace {\rm or} \medspace 3 \mod 8 & \lambda_4^T > 0. \\& \\
m \equiv 5 \mod 8 \medspace {\rm and} \medspace m  \medspace {\rm prime}& \lambda^*_4 > 0. \\&\\
m = 2p \medspace {\rm with} \medspace p \medspace {\rm prime} \medspace {\rm and} \medspace p \equiv 3 \medspace {\rm or} \medspace 5 \mod 8 & \lambda^*_4 > 0. \\& \\
m = p'p \medspace {\rm with} \medspace p  \medspace {\rm and} \medspace p' \medspace {\rm prime} \medspace {\rm and} \medspace p \equiv p' \equiv 3 \medspace {\rm or} \medspace 5 \mod 8 & \lambda^*_4 > 0. \\& \\
m = 3p \medspace {\rm with} \medspace p \medspace {\rm prime} \medspace {\rm and} \medspace p \equiv 1 \medspace {\rm or} \medspace 3 \mod 8 & \lambda^T_4 > 0. \\& \\

m \equiv 1 \medspace {\rm or} \medspace 2 \mod 4 \medspace {\rm and} \medspace m \neq 1 \medspace {\rm and} \medspace x = 1 & \lambda_4^* > 0 \medspace {\rm and} \medspace \mu_2^- > 0. \\& \\

m \equiv 1 \medspace {\rm or} \medspace 2 \mod 4 \medspace {\rm and} \medspace m \neq 1 \medspace {\rm and} \medspace x = 2 & \lambda_4 -\lambda_4^* > 0. \\& \\
m \equiv 3  \mod 8 \medspace {\rm and} \medspace -2 \medspace {\rm occurs} \medspace {\rm as} \medspace {\rm norm}  \medspace {\rm on} \medspace \ringO_{k_+} & \lambda_4^* > 0 \medspace {\rm and} \medspace \lambda_4^T > 0. \\& \\

m \equiv 3  \mod 8 \medspace {\rm and} \medspace -2 \medspace {\rm does}  \medspace {\rm not} \medspace {\rm occur} \medspace {\rm as} \medspace {\rm norm}  \medspace {\rm on} \medspace \ringO_{k_+} & \lambda_4 -\lambda_4^* > 0. \\& \\

m \equiv 3  \mod 8 \medspace {\rm and} \medspace m \medspace {\rm admits} \medspace {\rm a} \medspace {\rm prime}  \medspace {\rm divisor} \medspace p \medspace {\rm with} \medspace p \equiv 5 \medspace {\rm or} \medspace 7 \mod 8  & \lambda_4 -\lambda^*_4 > 0. \\& \\

m \equiv 1  \mod 8 \medspace {\rm and} \medspace w = 1 \medspace {\rm and} \medspace h(k_+) = 2^{\delta -3} & \mu^-_2 = 0. \\& \\
m \equiv 2  \mod 4 \medspace {\rm and} \medspace -2 \medspace {\rm occurs} \medspace {\rm as} \medspace {\rm norm}  \medspace {\rm on} \medspace \ringO_{k_+} & \lambda_4^T > 0. \\& \\

m \equiv 2  \mod 4 \medspace {\rm and} \medspace -2 \medspace {\rm does}  \medspace {\rm not} \medspace {\rm occur} \medspace {\rm as} \medspace {\rm norm}  \medspace {\rm on} \medspace \ringO_{k_+} \medspace {\rm and} \medspace h(k_+) = 2^{\delta -2}& \lambda_4^* = 0. \\& \\

m \equiv 2  \mod 4 \medspace {\rm and} \medspace q = 1 \medspace {\rm and} \medspace h(k_+) = 2^{ \delta -1} \medspace {\rm and} \medspace w = 2 & \lambda^*_4 = 0. \\& \\

m \equiv 2  \mod 4 \medspace {\rm and} \medspace  h(k_+) = 2^{\delta-2}  &\\
\medspace {\rm and} \medspace m \medspace {\rm admits} \medspace {\rm a} \medspace {\rm prime}  \medspace {\rm divisor} \medspace p \medspace {\rm with} \medspace p \equiv 5 \medspace {\rm or} \medspace 7 \mod 8 & \lambda^*_4 = 0. \\& \\

\hline
  \end{array}$$
}

\vbox{
With the above criteria at hand, we can decide for many Bianchi groups, which of the alternative cases in Kr\"amer's formulae must be used.
We do this in the below tables for all such Bianchi groups $\PSLO$ with absolute value of the discriminant $\Delta$ ranging between $7$ and $2003$.
In the cases $m \in \{$ 34, 105, 141, 142, 194, 235, 323, 427, 899, 979, 1243, 1507$\}$, where these statements are not sufficient to eliminate the wrong alternatives, we insert the results of~\cite{BianchiGP}. This way, the below tables treat all Bianchi groups with units $\{\pm 1\}$ and discriminant of absolute value less than $820$. The cases where an ambiguity remains (so to exclude them from our tables) are the following values of $m$: 205, 221, 254, 273, 305, 321, 322, 326, 345, 377, 381, 385, 386, 410, 438, 465, 469, 473, 482, 1067, 1139, 1211, 1339, 1443, 1763, 1771, 1947.

The above mentioned theorem on subgroup occurrences \cite{manuscript} solves all these ambiguities.
\bigskip

\begin{tabular}{|l|l|}
\hline & \\
 \scriptsize  $\begin{array}{c}2{\rm -torsion} \\ {\rm homology}\end{array}$ & \normalsize $m {\rm \medspace specifying \medspace \medspace Bianchi \medspace groups} \medspace \PSLO \medspace {\rm with \medspace this} \medspace 2{\rm -torsion \medspace homology}$ \\
\hline & \\ 
\normalsize \normalsize $P_{\circlegraph}$  & \scriptsize 7, 15, 23, 31, 35, 39, 47, 55, 71, 87, 91, 95, 103, 111, 115, 127, 143, 151, 155,  \\ & \scriptsize
159, 167, 183, 191, 199, 203, 215, 239, 247, 259, 263, 271, 295, 299, 303, 311, 319, 327, 335,  \\ & \scriptsize
355, 367, 371, 383, 395, 403, 407, 415, 431, 447, 463, 471, 479, 487, 503, 515, 519, 535, 543,  \\ & \scriptsize
551, 559, 583, 591, 599, 607, 611, 631, 635, 647, 655, 667, 671, 687, 695, 703, 707, 719, 743,  \\ & \scriptsize
751, 755, 763, 767, 807, 815, 823, 831, 835, 851, 863, 871, 879, 887, 911, 919, 923, 951, 955,  \\ & \scriptsize
967, 983, 991, 995, 1007, 1027, 1031, 1039, 1043, 1047, 1055, 1063, 1079, 1099, 1103, 1115, \\ & \scriptsize 
1119, 1135, 1147, 1151, 1159, 1167, 1195, 1199, 1219, 1231, 1247, 1255, 1263, 1267, 1279, 
 \\ & \scriptsize
1303, 1315, 1319, 1355, 1363, 1379, 1383, 1391, 1399, 1403, 1415, 1423, 1439, 1447, 1471, \\ & \scriptsize
1487, 1511, 1535, 1543, 1555, 1559, 1583, 1591, 1603, 1607, 1623,  1643, 1651, 1655, 1663,
\\ & \scriptsize
 1671, 1703, 1711, 1727, 1739, 1759, 1783, 1795, 1807, 1823, 1831, 1835, 1839, 1871, 1879,  \\ & \scriptsize
1883, 1891, 1895, 1903, 1915, 1919, 1939, 1943, 1951, 1959, 1963, 1983, 1991, 1999,  \\ & \\
\normalsize 2\normalsize $P_{\circlegraph}$  & \scriptsize 14, 46, 62, 94, 119, 158, 195, 206, 231, 255, 287, 302, 334, 382, 391, 398, 435, 446, 455,  \\ & \scriptsize
478, 483, 511, 527, 555, 595, 615, 623, 651, 663, 679, 715, 759, 791, 795, 903, 915, 935, 943,  \\ & \scriptsize
987, 1015, 1095, 1131, 1207, 1235, 1271, 1295, 1311, 1335, 1343, 1407, 1435, 1455, 1463, \\ & \scriptsize
1479, 1491, 1515, 1547,  1551, 1595, 1615, 1631, 1635, 1659, 1687, 1695, 1751, 1767, 1799,
\\ & \scriptsize 1855, 1887, 1927, 1955, 1967,  \\ & \\
\normalsize 3\normalsize $P_{\circlegraph}$  & \scriptsize 21, 30, 42, 69, 70, 77, 78, 79, 93, 110, 133, 138, 154, 174, 182, 186, 190, 213, 222,   
223, \\ & \scriptsize 230, 235, 237, 253, 266, 282, 286, 301, 309, 310, 318, 341, 359, 366, 406, 413, 426, 427,  \\ & \scriptsize 430, 437, 
453, 470, 474, 494, 839, 895, 899, 1191, 1223, 1367, 1527, 1567, 1639, 1735, 1847,  \\ & \\
\normalsize 4\normalsize $P_{\circlegraph}$  & \scriptsize 161, 217, 238, 329, 399, 497, 799, 959, 1023, 1155, 1239, 1351, 1679, 1743, 1995,  \\ & \\
\normalsize 5\normalsize $P_{\circlegraph}$  & \scriptsize 439, 727, 1111, 1327,  \\ & \\
\normalsize 6\normalsize $P_{\circlegraph}$  & \scriptsize  142, 165, 210, 285, 330, 357, 390, 429, 434, 462, 1495, 1599,  \\ & \\
\normalsize 7\normalsize $P_{\circlegraph}$  & \scriptsize 141, 1087,  \\ & \\
\normalsize 8\normalsize $P_{\circlegraph}$  & \scriptsize 105,  \\ & \\
\normalsize 2$P_{\Kleinfourgroup}^*$   & \scriptsize 5, 10, 13, 26, 29, 53, 58, 61, 74, 106, 109, 122, 149, 157, 173, 181, 202, 218, 277,  \\ & \scriptsize
293, 298, 314, 317, 362, 394, 397, 421, 458, 461,  \\ & \\
\hline
\end{tabular}
}

\begin{tabular}{|l|l|}
\hline & \\
 \scriptsize  $\begin{array}{c}2{\rm -torsion} \\ {\rm homology}\end{array}$ & \normalsize $m {\rm \medspace specifying \medspace \medspace Bianchi \medspace groups} \medspace \PSLO \medspace {\rm with \medspace this} \medspace 2{\rm -torsion \medspace homology}$ \\
\hline & \\ 
\normalsize 2$P_{\Kleinfourgroup}^*$ + 2\normalsize $P_{\circlegraph}$  & \scriptsize 37, 101, 197, 269, 349, 373, 389,  \\ & \\
\normalsize 2$P_{\Kleinfourgroup}^*$ + 3\normalsize $P_{\circlegraph}$  & \scriptsize 229, 346,  \\ & \\
\normalsize 4$P_{\Kleinfourgroup}^*$   & \scriptsize 85, 130, 170, 290, 365, 370, 493,  \\ & \\
\normalsize 4$P_{\Kleinfourgroup}^*$ + \normalsize $P_{\circlegraph}$  & \scriptsize 65, 185, 265, 481,  \\ & \\
\normalsize 4$P_{\Kleinfourgroup}^*$ + 3\normalsize $P_{\circlegraph}$  & \scriptsize 442, 445,  \\ & \\
\normalsize 4$P_{\Kleinfourgroup}^*$ + 4\normalsize $P_{\circlegraph}$  & \scriptsize 485,  \\ & \\
\normalsize 4$P_{\Kleinfourgroup}^*$ + 5\normalsize $P_{\circlegraph}$  & \scriptsize 145,  \\ & \\
\normalsize $P_{\Afour}^*$ + $P_{\Kleinfourgroup}^*$   & \scriptsize 2,  \\ & \\
\normalsize 2$P_{\Afour}^*$  & \scriptsize 11, 19, 43, 59, 67, 83, 107, 131, 139, 163, 179, 211, 227, 251, 283, 307, 331, 347, 379,  \\ & \scriptsize
419, 467, 491, 523, 547, 563, 571, 587, 619, 643, 683, 691, 739, 787, 811, 827, 859, 883, 907,  \\ & \scriptsize
947, 971, 1019, 1051, 1123, 1163, 1187, 1259, 1283, 1291, 1307, 1427, 1451, 1459, 1483, 1499,  \\ & \scriptsize 1531, 1571, 1579, 
1619, 1667, 1699, 1723, 1747, 1867, 1931, 1979, 2003,  \\ & \\
\normalsize 2$P_{\Afour}^*$ + \normalsize $P_{\circlegraph}$  & \scriptsize 6, 22, 38, 86, 118, 134, 166, 214, 262, 278, 358, 422, 443, 454, 659, 1091, 1171, 1523, 1627,  \\ & \scriptsize
1787, 1811, 1907, 1987,  \\ & \\
\normalsize 2$P_{\Afour}^*$ + 2\normalsize $P_{\circlegraph}$  & \scriptsize 499,  \\ & \\
\normalsize 2$P_{\Afour}^*$ + 2$P_{\Kleinfourgroup}^*$   & \scriptsize 17, 41, 73, 89, 97, 113, 137, 193, 233, 241, 281, 313, 337, 353, 409, 433, 449, 457,  \\ & \\
\normalsize 2$P_{\Afour}^*$ + 2$P_{\Kleinfourgroup}^*$ + \normalsize $P_{\circlegraph}$  & \scriptsize 82, 146, 178, 274, 466,  \\ & \\
\normalsize 2$P_{\Afour}^*$ + 2$P_{\Kleinfourgroup}^*$ + \normalsize 2$P_{\circlegraph}$  & \scriptsize 34, 194,  \\ & \\
\normalsize 2$P_{\Afour}^*$ + 2$P_{\Kleinfourgroup}^*$ + 4\normalsize $P_{\circlegraph}$  & \scriptsize 226, 257,  \\ & \\
\normalsize 2$P_{\Afour}^*$ + 2$P_{\Kleinfourgroup}^*$ + 8\normalsize $P_{\circlegraph}$  & \scriptsize 401,  \\ & \\
\normalsize 4$P_{\Afour}^*$   & \scriptsize 51, 123, 187, 267, 339, 411, 451, 699, 771, 779, 803, 843, 1059, 1203, 1347, \\ & \scriptsize 1563, 1691, 1707, 1779, 
1819, 1843, 1923,  \\ & \\
\normalsize 4$P_{\Afour}^*$ + \normalsize $P_{\circlegraph}$  & \scriptsize 219, 291, 323, 579, 723, 731, 939, 979, 1003, 1011, 1227, 1243, 1371, 1387, 1411, 1507, 1731, \\ & \scriptsize 1803,  \\ & \\
\normalsize 4$P_{\Afour}^*$ + 2\normalsize $P_{\circlegraph}$  & \scriptsize 66, 102, 114, 246, 258, 354, 374, 402, 418, 498, 1851,  \\ & \\
\normalsize 4$P_{\Afour}^*$ + 3\normalsize $P_{\circlegraph}$  & \scriptsize 33, 57, 129, 177, 201, 209, 249, 393, 417, 489, 1299,  \\ & \\
\normalsize 8$P_{\Afour}^*$   & \scriptsize 627, 1419.  \\ & \\
\hline
\end{tabular}

\subsection{Asymptotic behavior of the number of conjugacy classes} \label{Asymptotic behaviour of the number of conjugacy classes}
\begin{figure}
\caption{Average homological 3-torsion outside subgroups of type $\Sthree$, scaled as indicated.} \label{plot}
\includegraphics[height=100mm]{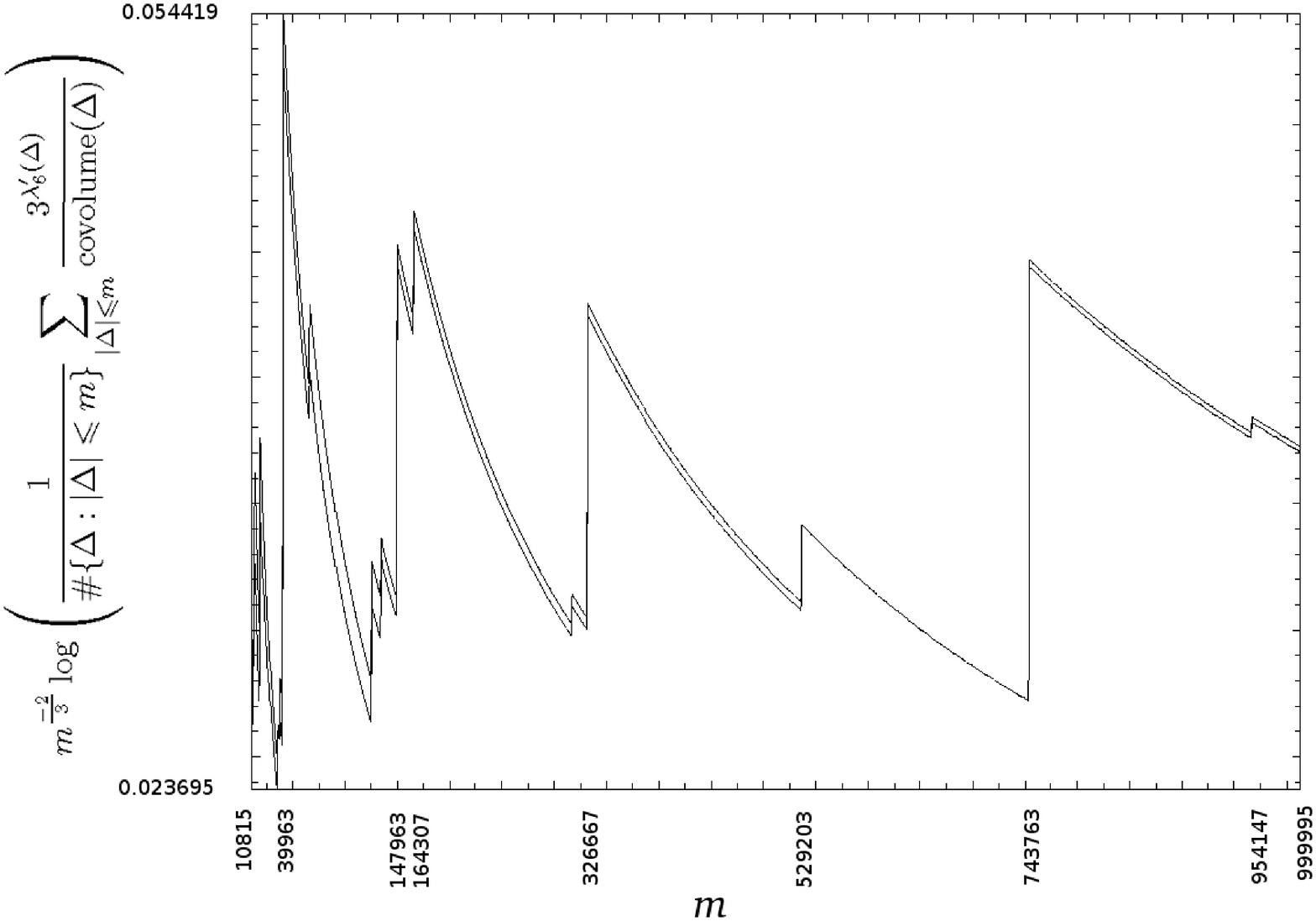}
\end{figure}
From Kr\"amer's above formulae, we see that both in $2$--torsion and in $3$--torsion, the number of conjugacy classes of finite subgroups, and hence also the cardinality of the homology of the Bianchi groups in degrees above their virtual cohomological dimension, admits only two factors which are not strictly limited: $h_{k_+}$ and $2^\delta$.
As for the ideal class number $h_{k_+}$, it is subject to the predictions of the Cohen-Lenstra heuristic \cite{Cohen}. 
As for the factor $2^\delta$, the number $\delta$ of finite ramification places of $\rationals(\sqrt{-m}\thinspace)$ over $\rationals$ is well-known to equal the number of prime divisors of the discriminant of $\rationals(\sqrt{-m}\thinspace)$.

The numerical evaluation of Kr\"amer's formulae provides us with databases which are over a thousand times larger than what is reasonable
 to print in Sections~\ref{Numerical evaluation of Kraemer's formulae in 3-torsion} and~\ref{Numerical evaluation of Kraemer's formulae in 2-torsion}.
We now give an instance of how these databases can be exploited.
Denote the discriminant of $\rationals(\sqrt{-m}\thinspace)$ by~$\Delta$. In the cases $m \equiv 3 \mod 4$, we have~$\Delta = -m$.
Denote the number $\lambda_6 -\lambda_6^*$ of connected components of type $\circlegraph$ in the 3-conjugacy classes graph by ${\lambda'_6(\Delta)}$.
Then clearly, the subgroup in $\Homol_{q}(\PSLO)$, $q > 2$, generated by the order-3-elements coming from the connected components of this type, is of order $3^{\lambda'_6(\Delta)}$.
Denote by ${\rm covolume}(\Delta)$ the volume of the quotient space $_{\PSLO}\backslash\Hy$.
The study of the ratio $\frac{3^{\lambda'_6(\Delta)}}{{\rm covolume}(\Delta)}$ is motivated by the formulae in~\cite{BergeronVenkatesh}.
In Figure~\ref{plot}, we print the logarithm of the average of this ratio over the cases $|\Delta| \equiv 3 \mod 4$, scaled by a factor~$m^\frac{-2}{3}$, so to say
$$m^\frac{-2}{3}\log\left(\frac{1}{\# \{\Delta : |\Delta| \leq m\}}\sum\limits_{|\Delta| \leq m} \frac{3^{\lambda'_6(\Delta)}}{{\rm covolume}(\Delta)}\right),$$
where we consider $m$ and $\Delta$ as independent variables, $m$ running through the square-free positive rational integers.
In order to cope with the fact that in some cases, Kr\"amer's formulae leave an ambiguity, we print a function assuming the lowest possible values of ${\lambda'_6(\Delta)}$ and one assuming the highest possible values of~${\lambda'_6(\Delta)}$ in the same diagram.

So for $m$ greater than 10815 and less than one million, we can observe that the average of the above ratio oscillates between exp($m^\frac{2}{3}0.023695$) and exp($m^\frac{2}{3}0.054419$).
For $m$ less than 10815, this oscillation is much stronger, and the diagram might be seen as suggesting that possibly the oscillation could remain between these two bounds for $m$ greater than one million.

For related asymptotics, see the recent works of Bergeron/Venkatesh~\cite{BergeronVenkatesh} and Seng\"un~\cite{Sengun}. 
For an alternative computer program treating the Bianchi groups, see the SAGE package of Cremona's student Aran\'es~\cite{Aranes}, and for GL$_2(\ringO)$ see Yasaki's program \cite{Yasaki}.
\end{appendix}

\begin{bibdiv}
\begin{biblist}
\bib{Pari}{book}{
  author =  {Allombert,Bill},
  author =  {Batut, Christian},
  author = {Belabas,Karim} ,
  author =  {Bernardi, Dominique },
  author = {Cohen, Henri },
  author =  {Diaz y Diaz, Francisco },
  author =  {Eichenlaub, Yves },
  author =  {Gourdon, Xavier },
  author =  {Granboulan, Louis },
  author =  {Haible, Bruno },
  author =  {Hanrot, Guillaume },
  author =  {Letard, Pascal },
  author =  {Niklasch, Gerhard },
  author =  {Olivier, Michel },
  author =  {Papanikolaou, Thomas },
  author = { Roblot, Xavier},
  author =  {Simon, Denis },
  author =  {Tollis, Emmanuel },
  author = {Zakharevitch,Ilya},
  author = {the PARI group},
  title =   {PARI/GP, version {\tt 2.4.3}},
  address = {specialized computer algebra system},
  year =    {Bordeaux, 2010,  \url{http://pari.math.u-bordeaux.fr/}},
}

\bib{Aranes}{thesis}{
      author={Aran\'es, Maria~T.},
       title={Modular symbols over number fields},
        type={Ph.D. Thesis},
        school = {University of Warwick, \\ www.warwick.ac.uk/staff/J.E.Cremona/theses/maite\_thesis.pdf},
        date={2010},
}

 \bib{BergeronVenkatesh}{article}{
   author = {Bergeron, Nicolas},
    author = {Venkatesh, Akshay},
title = {The asymptotic growth of torsion homology for arithmetic groups},
journal = {Journal of the Institute of Mathematics of Jussieu},
doi = {10.1017/S1474748012000667},
URL = {http://dx.doi.org/10.1017/S1474748012000667},
eprint = {http://journals.cambridge.org/article_S1474748012000667},
}
\bib{Berkove}{article}{
   author={Berkove, Ethan},
   title={The integral cohomology of the Bianchi groups},
   journal={Trans. Amer. Math. Soc.},
   volume={358},
   date={2006},
   number={3},
   pages={1033--1049 (electronic)},
   ISSN={0002-9947},
   review={\MR{2187644 (2006h:20073)}},
   review={Zbl pre02237880 }
}
 \bib{BerkoveMod2}{article}{
    author={Berkove, Ethan},
    title={The mod-2 cohomology of the Bianchi groups},
    journal={Trans. Amer. Math. Soc.},
    volume={352},
    date={2000},
    number={10},
    pages={4585--4602},
    issn={0002-9947},
    review={\MR{1675241 (2001b:11043)}},
    doi={10.1090/S0002-9947-00-02505-8},
 }
\bib{BerkoveRahm}{article}{
    author={Berkove, Ethan},
      author={Rahm, Alexander~D.},
       title={The mod-$2$ cohomology rings of ${\mathrm{SL}_2}$ of the imaginary quadratic integers},
        pages={preprint (2013), \url{http://hal.archives-ouvertes.fr/hal-00769261}},
}
\bib{Bianchi}{article}{
   author={Bianchi, Luigi},
   title={Sui gruppi di sostituzioni lineari con coefficienti appartenenti a corpi quadratici immaginar\^{\i}},
   language={Italian},
   journal={Math. Ann.},
   volume={40},
   date={1892},
   number={3},
   pages={332--412},
   ISSN={0025-5831},
   review={\MR{1510727}},
   review={JFM 24.0188.02}
}
\bib{Brown}{book}{
      author={Brown, Kenneth~S.},
       title={Cohomology of groups},
      series={Graduate Texts in Mathematics},
   publisher={Springer-Verlag},
        date={1982},
      volume={\textbf{87}},
        ISBN={0-387-90688-6},
      review={\MR{672956 (83k:20002)}, Zbl 0584.20036},
}


\bib{Cohen}{book}{
   author={Cohen, Henri},
   title={A course in computational algebraic number theory},
   series={Graduate Texts in Mathematics},
   volume={138},
   publisher={Springer-Verlag},
   place={Berlin},
   date={1993},
   pages={xii+534},
   isbn={3-540-55640-0},
   review={\MR{1228206 (94i:11105)}},
}

\bib{Davis}{book}{
   author={Davis, Michael W.},
   title={The geometry and topology of Coxeter groups},
   series={London Mathematical Society Monographs Series},
   volume={32},
   publisher={Princeton University Press},
   place={Princeton, NJ},
   date={2008},
   pages={xvi+584},
   isbn={978-0-691-13138-2},
   isbn={0-691-13138-4},
   review={\MR{2360474 (2008k:20091)}},
}

\bib{HAP}{incollection}{
      author={Ellis, Graham},
       title={Homological algebra programming},
        date={2008},
   booktitle={Computational group theory and the theory of groups},
      series={Contemp. Math.},
      volume={470},
   publisher={Amer. Math. Soc.},
     address={Providence, RI},
       pages={63\ndash 74},
      review={\MR{2478414 (2009k:20001)}, \textit{Torsion Sub-Complexes Sub-Package} by Alexander D. Rahm (2012)},
}

\bib{ElstrodtGrunewaldMennicke}{book}{
   author={Elstrodt, J\"urgen},
   author={Grunewald, Fritz},
   author={Mennicke, Jens},
   title={Groups acting on hyperbolic space},
   series={Springer Monographs in Mathematics},
   publisher={Springer-Verlag},
   place={Berlin},
   date={1998},
   pages={xvi+524},
   ISBN={3-540-62745-6},
   review={\MR{1483315 (98g:11058)}},
   review={Zbl 0888.11001 }
}
\bib{Fine}{book}{
   author={Fine, Benjamin},
   title={Algebraic theory of the Bianchi groups},
   series={Monographs and Textbooks in Pure and Applied Mathematics},
   volume={\textbf{129}},
   publisher={Marcel Dekker Inc.},
   place={New York},
   date={1989},
   pages={viii+249},
   ISBN={0-8247-8192-9},
   review={\MR{1010229 (90h:20002)}},
   review={Zbl 0760.20014 }
}
\bib{Gruen}{article}{,
author={Gr\"un, Otto},
title={Beitr\"age zur Gruppentheorie. I.},
language={German},
journal={J. Reine Angew. Math.},
volume={174},
pages={1-14},
year={1935},
review={JFM 61.0096.03},
}
\bib{binaereFormenMathAnn9}{article}{
      author={Klein, Felix},
       title={Ueber bin\"are {F}ormen mit linearen {T}ransformationen in sich selbst},
        date={1875},
        ISSN={0025-5831},
     journal={Math. Ann.},
      volume={9},
      number={2},
       pages={183\ndash 208},
         url={http://dx.doi.org/10.1007/BF01443373},
      review={\MR{1509857}},
}
\bib{manuscript}{book}{
   author={Kr\"amer, Norbert},
   title={Die endlichen Untergruppen der Bianchi-Gruppen --- Einbettung von Maximalordnungen rationaler Quaternionenalgebren},
   date={preprint, \url{http://fr.arxiv.org/abs/1207.6460} (2012)},
   language={German},
}
\bib{Kraemer}{book}{
   author={Kr\"amer, Norbert},
   title={Die Konjugationsklassenanzahlen der endlichen Untergruppen in der Norm-Eins-Gruppe von Maxi\-malordnungen in Quaternionenalgebren},
   date={Diplomarbeit, 	Mathematisches Institut, Universit\"at Bonn, 1980.  \url{http://tel.archives-ouvertes.fr/tel-00628809/}},
   language={German},
}
 \bib{MaclachlanReid}{book}{
   author={Maclachlan, Colin},
   author={Reid, Alan W.},
   title={The arithmetic of hyperbolic 3-manifolds},
   series={Graduate Texts in Mathematics},
   volume={\textbf{219}},
   publisher={Springer-Verlag},
   place={New York},
   date={2003},
   pages={xiv+463},
   ISBN={0-387-98386-4},
   review={\MR{1937957 (2004i:57021)}},
   review={Zbl 1025.57001}
}
\bib{McCleary} {book}{
   author={ McCleary, John},
title={A user's guide to spectral sequences. 2nd ed.}, 
publisher={Cambridge Studies in Advanced Mathematics 58. Cambridge University Press},
year={2001},
   review={Zbl 0959.55001},
}
\bib{Poincare}{article}{
   author={Poincar{\'e}, Henri},
   title={M\'emoire sur les groupes klein\'eens},
   language={French},
   journal={Acta Math.},
   volume={3},
   date={1883},
   number={1},
   pages={49--92},
   issn={0001-5962},
   review={\MR{1554613}, JFM 15.0348.02},
}
\bib{orbifold_cohomology}{article}{
      author={Rahm, Alexander~D.},
       title={Chen/Ruan orbifold cohomology of the Bianchi groups},
        pages={preprint, arXiv : 1109.5923, \url{http://hal.archives-ouvertes.fr/hal-00627034/}},
}
\bib{Rahm_homological_torsion}{article}{
      author={Rahm, Alexander~D.},
       title={The homological torsion of {${\rm PSL}_2$} of the imaginary quadratic integers},
   JOURNAL = {Trans. Amer. Math. Soc.},
    VOLUME = {365},
      YEAR = {2013},
    NUMBER = {3},
     PAGES = {1603--1635},
      ISSN = {0002-9947},
       DOI = {10.1090/S0002-9947-2012-05690-X},
       URL = {http://dx.doi.org/10.1090/S0002-9947-2012-05690-X},
}
	
\bib{BianchiGP}{book}{
      author={Rahm, Alexander~D.},
       title={Bianchi.gp},
   publisher={Open source program (GNU general public
  license) realizing the algorithms of~\cite{Higher_torsion}, validated by the CNRS: \url{http://www.projet-plume.org/fiche/bianchigp} \quad ,
  subject to the Certificat de Comp\'etences en Calcul Intensif (C3I)
  and part of the GP scripts library of Pari/GP Development Center, 2010.},
}
\bib{Higher_torsion}{article}{
      author={Rahm, Alexander~D.},
title = {Higher torsion in the Abelianization of the full Bianchi groups},
journal = {LMS Journal of Computation and Mathematics},
volume = {16},
year = {2013},
issn = {1461-1570},
pages = {344--365},
doi = {10.1112/S1461157013000168},
URL = {http://journals.cambridge.org/article_S1461157013000168},
}
\bib{RahmFuchs}{article}{
      author={Rahm, Alexander~D.},
      author={Fuchs, Mathias},
       title={The integral homology of {${\rm PSL}_2$} of imaginary quadratic
  integers with nontrivial class group},
        date={2011},
        ISSN={0022-4049},
     journal={J. Pure Appl. Algebra},
      volume={215},
      number={6},
       pages={1443\ndash 1472},
         url={http://dx.doi.org/10.1016/j.jpaa.2010.09.005},
      review={\MR{2769243}},
  review={Zbl pre05882434 },
}
\bib{Sanchez-Garcia_Coxeter}{article}{
   author={S{\'a}nchez-Garc{\'{\i}}a, Rub{\'e}n J.},
   title={Equivariant $K$-homology for some Coxeter groups},
   journal={J. Lond. Math. Soc. (2)},
   volume={75},
   date={2007},
   number={3},
   pages={773--790},
   issn={0024-6107},
   review={\MR{2352735 (2009b:19006)}},
   doi={10.1112/jlms/jdm035},
}
 \bib{SchwermerVogtmann}{article}{
    author={Schwermer, Joachim},
    author={Vogtmann, Karen},
    title={The integral homology of ${\rm SL}\sb{2}$ and ${\rm PSL}\sb{2}$ of Euclidean imaginary quadratic integers},
    journal={Comment. Math. Helv.},
    volume={58},
    date={1983},
    number={4},
    pages={573--598},
    ISSN={0010-2571},
    review={\MR{728453 (86d:11046)}},
    review={Zbl 0545.20031 }
 }
 \bib{Sengun}{article}{
      author={\c{S}eng\"{u}n, Mehmet~H.},
       title={On the (co)homology of {B}ianchi groups},
        date={2010},
     journal={to appear in Exp. Math.},
}
\bib{Soule}{article}{
   author={Soul{\'e}, Christophe},
   title={The cohomology of ${\rm SL}_{3}({\bf Z})$},
   journal={Topology},
   volume={17},
   date={1978},
   number={1},
   pages={1--22},
   issn={0040-9383},
   review={\MR{0470141 (57 \#9908)}},
}
\bib{Swan}{article}{
   author={Swan, Richard G.},
   title={Generators and relations for certain special linear groups},
   journal={Advances in Math.},
   volume={6},
   date={1971},
   pages={1--77},
   ISSN={0001-8708},
   review={\MR{0284516} (\textbf{44} \#1741)},
   review={Zbl 0221.20060 }
}

\bib{Swan1960}{article}{
   author={Swan, Richard G.},
   title={The $p$-period of a finite group},
   journal={Illinois J. Math.},
   volume={4},
   date={1960},
   pages={341--346},
   issn={0019-2082},
   review={\MR{0122856 (23 \#A188)}},
}
		

\bib{Vogtmann}{article}{
   author={Vogtmann, Karen},
   title={Rational homology of Bianchi groups},
   journal={Math. Ann.},
   volume={272},
   date={1985},
   number={3},
   pages={399--419},
   ISSN={0025-5831},
   review={\MR{799670 (87a:22025)}},
   review={Zbl 0545.20031 }
}

\bib{Wall}{article}{
      author={Wall, C. Terence~C.},
       title={Resolutions for extensions of groups},
        date={1961},
     journal={Proc. Cambridge Philos. Soc.},
      volume={57},
       pages={251\ndash 255},
      review={\MR{0178046 (31 \#2304)}},
}

\bib{Yasaki}{incollection}{
      author={Yasaki, Dan},
       title={Hyperbolic tessellations associated to {B}ianchi groups},
        date={2010},
   booktitle={Algorithmic number theory, Lecture Notes in Comput. Sci. 6197},
   publisher={Springer},
     address={Berlin},
       pages={385\ndash 396},
      review={\MR{2721434}},
}
\end{biblist}
\end{bibdiv}

\end{document}